\newcommand{\rrvert}{\vert}
\newcommand{\llvert}{\vert}
\newcommand{\eqref}[1]{(\ref{#1})}
\newtheorem{theorem}{Theorem}
\newtheorem{corollary}[theorem]{Corollary}
\newtheorem{lemma}[theorem]{Lemma}
\newcommand{\rd}{\mathbb{R}^d}
\newcommand{\e}{\mathbf {E}}
\begin{document}
\begin{frontmatter}

\title{Random walks in cones\thanksref{T1}}
\runtitle{Random walks in cones}
\thankstext{T1}{Supported by DFG.}

\begin{aug}
\author[A]{\fnms{Denis} \snm{Denisov}\corref{}\ead[label=e1]{denis.denisov@manchester.ac.uk}}
\and
\author[B]{\fnms{Vitali} \snm{Wachtel}\ead[label=e2]{wachtel@mathematik.uni-muenchen.de}}

\runauthor{D. Denisov and V. Wachtel}

\affiliation{University of Manchester and University of Munich}

\address[A]{School of Mathematics\\
University of Manchester\\
Oxford Road\\
Manchester M13 9PL\\
United Kingdom\\
\printead{e1}}

\address[B]{Mathematical Institute\\
University of Munich\\
Theresienstrasse 39\\
D--80333 Munich\\
Germany\\
\printead{e2}}
\end{aug}

% HISTORY:
\received{\smonth{11} \syear{2012}}
\revised{\smonth{6} \syear{2013}}

% ABSTRACT
%
\begin{abstract}
We study the asymptotic behavior of a multidimensional random walk
in a general cone. We find the tail asymptotics for the exit time
and prove integral and local limit theorems for a random walk
conditioned to stay in a cone. The main step in the proof consists
in constructing a positive harmonic function for our random walk
under minimal moment restrictions on the increments. For the proof
of tail asymptotics and integral limit theorems, we use a strong
approximation of random walks by Brownian motion. For the proof
of local limit theorems, we suggest a rather simple approach, which
combines integral theorems for random walks in cones with
classical local theorems for unrestricted random walks. We also discuss
some possible applications of our results to ordered random walks and
lattice path enumeration.
\end{abstract}

% KEYWORDS
% Pirmas kwd is didziosios raides
%
\begin{keyword}[class=AMS]
\kwd[Primary ]{60G50}
\kwd[; secondary ]{60G40}
\kwd{60F17}
\end{keyword}

\begin{keyword}
\kwd{Random walk}
\kwd{exit time}
\kwd{harmonic function}
\kwd{Weyl chamber}
\end{keyword}

\end{frontmatter}

%%%%%%%%%%%%%%%%%%%%%%%%%%%%%%%%%%%%%%%%%%%%%%%%%%%%%%%%%%%%%%%%%%%%%%%%%%%%%%%%%%%%%%%%%%%%%%%%%%%%%%%%%%%%
%s1 #&#
\section{Introduction, main results and discussion}

%s1.1 #&#
\subsection{Motivation}
Random walks conditioned to stay in cones is a very popular topic in
probability. They appear naturally in many situations. Here, we mention
some of them:
\begin{itemize}
\item Nonintersecting paths, which can be seen as a multidimensional
random walk in one of Weyl chambers, are used in modeling of
different physical phenomena; see Fisher \cite{Fisch84}. There are
also a lot of connections between nonintersecting paths and Young
diagrams, domino tiling, random matrices and many other
mathematical objects; for an overview, see K\"onig~\cite{K05}.
\item Random walks in the quarter-plane reflected at the boundary
are often used in the queueing
theory. For diverse examples, see monographs written by Cohen~\cite{Cohen92},
by Fayolle, Iasnogorodski and Malyshev \cite{FIM}
and a paper by Greenwood and Shaked \cite{GS77}.
\item Asymptotic behavior of branching processes and random walks in
random environment is closely connected to the behavior of random
walks conditioned to stay positive, which are one-dimensional cases
of a random walk conditioned to stay in a cone; see \cite{AGKV} and
references therein.
\end{itemize}

The main purpose of the present paper is to propose an approach which
determines the asymptotic behavior of exit times and allows one to
prove limit theorems for a rather wide class of cones and under
minimal moment conditions on the increments of random walks.
For that, we use a strong approximation of multidimensional random
walks with
multidimensional Brownian motion. This allows to extend the
corresponding results
for the Brownian motion to the discrete time setting and to study the
asymptotic behavior
of random walks.

For Brownian motion, the study of exit times from cones
was initiated by Burkholder. In \cite{Burk77},
he proposed necessary and sufficient conditions for the existence
of moments of exit times. Later on, using these results,
DeBlassie \cite{DeB87} found an exact formula for $\mathbf P(\tau_x>n)$
as an infinite series. This formula allowed him to obtain
tail asymptotics for exit times.
These results were obtained by Ba{\~n}uelos and Smits \cite{BS97} under
more general conditions.
Garbit \cite{Gar09} defined a Brownian motion started at origin and
conditioned to stay in
a cone.

For random walks in discrete time, much less is known.
A corresponding
generalization of Burkholder's results was obtained by McConnell
\cite{MC84}. Namely, he found necessary and sufficient conditions
for the existence of moments of exit times.
Varopoulos \cite{Var99,Var00} derived upper and lower
bounds for the tail probability under an additional
assumption that the increments of the random walk are bounded.
Moreover, he showed that this upper bound remains valid for
Markov chains with zero drifts and bounded increments. MacPhee, Menshikov
and Wade~\cite{MMW} gave criteria for the existence of moments of
exit times from wedges for Markov chains with asymptotically zero drifts
and bounded increments.
The exact asymptotic behavior for the exit times of
a random walk is known only in some special
cases. Shimura \cite{Shim84} and Garbit \cite{Gar10} obtained the
asymptotics of the tail and some limit theorems for two-dimensional
random walks. There are many results in the literature on random walks
in Weyl chambers. We shall mention them later, in a special paragraph
devoted to Weyl chambers.

%s1.2 #&#
\subsection{Notation and assumptions}
Consider a random walk $\{S(n),n\geq1\}$ on $\rd$, $d\geq1$, where
\[
S(n)=X(1)+\cdots+X(n) %
\]
and $\{X(n), n\geq1\}$ is a family of independent copies of a random
variable $X=(X_1,X_2,\ldots,X_d)$. Denote by $\mathbb{S}^{d-1}$ the
unit sphere of $\rd$ and $\Sigma$ an open and connected subset of
$\mathbb{S}^{d-1}$. Let $K$ be the cone generated by the rays
emanating from the origin and passing through $\Sigma$, that is,
$\Sigma=K\cap\mathbb{S}^{d-1}$.

Let $\tau_x$ be the exit time from $K$ of the random walk with
starting point $x\in K$, that is,
\[
\tau_x=\inf\bigl\{n\ge1\dvtx x+S(n)\notin K\bigr\}. %
\]
In this paper, we study asymptotics for
\[
\mathbf P(\tau_x>n),\qquad n\to\infty, %
\]
construct a random walk conditioned to stay in the cone $K$ and
prove limit theorems for this random walk.

The essential part of the proof is a coupling with the Brownian
motion. Hence, we extensively use related results for the Brownian
motion. Let $B(t)$ be a standard Brownian motion on $\rd$
and let $\tau^{\mathrm{bm}}_x$ be the exit time of $B(t)$ from the cone $K$,
\[
\tau^{\mathrm{bm}}_x=\inf\bigl\{t\ge0\dvtx x+B(t)\notin K\bigr\}.
\]
The harmonic function of the Brownian motion killed at the
boundary of $K$ can be described as the minimal (up to a
constant), strictly positive on $K$ solution of the
following boundary problem:
\[
\Delta u(x)=0, \qquad x \in K \mbox{ with boundary condition }u |_{\partial
K}=0.
\]
If such a function exists, then
see \cite{DeB87} and \cite{BS97}. One can show that
%
%e1 #&#
\begin{equation}
\label{BS} \mathbf P\bigl(\tau_x^{\mathrm{bm}}>t\bigr)\sim
\varkappa\frac{u(x)}{t^{p/2}},\qquad t\to \infty.
\end{equation}

The function $u(x)$ and constant $p$ can be found as follows.
If $d=1$, then we have only one nontrivial cone $K=(0,\infty)$.
In this case, $u(x)=x$ and $p=1$. Assume now that $d\geq2$.
Let $L_{\mathbb{S}^{d-1}}$ be the Laplace--Beltrami operator on
$\mathbb{S}^{d-1}$ and assume that $\Sigma$ is regular with respect to
$L_{\mathbb{S}^{d-1}}$.
With this assumption, there exists a complete set of orthonormal eigenfunctions
$m_j$ and corresponding eigenvalues $0<\lambda_1<\lambda_2\le\lambda
_3\le\cdots$ satisfying
%
%e2 #&#
\begin{eqnarray}
\label{eq.eigen} L_{\mathbb{S}^{d-1}}m_j(x)&=&-\lambda_jm_j(x),\qquad
x\in\Sigma,
\nonumber
\\[-8pt]
\\[-8pt]
\nonumber
m_j(x)&=&0, \qquad x\in\partial\Sigma.
\end{eqnarray}
Then
\[
p=\sqrt{\lambda_1+(d/2-1)^2}-(d/2-1)>0
\]
and the harmonic function $u(x)$ of the Brownian motion is given by
%
%e3 #&#
\begin{equation}
\label{u.from.m} u(x)=|x|^pm_1 \biggl(\frac{x}{|x|}
\biggr),\qquad x\in K.
\end{equation}
We refer to \cite{BS97} for further details on exit times of Brownian motion.

Unfortunately, we are not able to determine the asymptotic behavior
of exit times for random walks for such a general class of cones.
More precisely, we will use the following additional conditions on the
cone $K$:
\begin{itemize}
\item We assume that there exists an open and connected set
$\widetilde\Sigma\subset\mathbb{S}^{d-1}$ with $\operatorname{
dist}(\partial\Sigma, \partial\widetilde\Sigma)>0$ such that
$\Sigma\subset\widetilde\Sigma$ and the
function $m_1$ can be extended to $\widetilde\Sigma$ as a solution to
(\ref{eq.eigen}).
\item$K$ is either convex or starlike [there exists $x_0\in\Sigma$
such that
$x_0+K\subset K$ and $\operatorname{ dist}(x_0+K, \partial K)>0$] and $C^2$.
(Every convex cone is also starlike, for the proof see Remark~\ref{conv=star}.)
\end{itemize}
It is known that if $m_1$ can be extended then the boundary
$\partial\Sigma$ should be piecewise real-analytic. Furthermore, if
$\partial\Sigma$ is real-analytic, then $m_1$ is extendable; see, for
example, Theorem A in
Morrey and Nirenberg \cite{MN}.\hskip.2pt\setcounter{footnote}{1}\footnote{We are grateful to Professor
Ancona for pointing out the reference.}
Since the boundary of every two-dimensional cone consists of two
points on the unit circle, one can always extend $m_1$ to a bigger cone
in $\mathbb{R}^2$. Furthermore, it is clear that we can extend $u(x)$
to a harmonic function
in the cone $\widetilde K $ generated by $\widetilde\Sigma$ using
(\ref
{u.from.m}).
We impose the following assumptions on the increments of the random walk:
\begin{itemize}
\item\textit{Normalization assumption}: We assume that $\mathbf EX_j=0,
\mathbf EX_j^2=1,j=1,\ldots,d$. In addition, we assume that
$\operatorname{cov}(X_i,X_j)=0$.
\item\textit{Moment assumption}: We assume that $\mathbf
E|X|^{\alpha}<\infty$ with $\alpha=p$ if $p>2$ and some $\alpha>2$
if $p\le2$.
\end{itemize}

%%%%%%%%%%%%%%%%%%%%%%%%%%%%%%%%%%%%%%%%%%%%%%%%%%%%%%%%%%%%%%%%%%%%%%%%%%%%%%%%%%%%%%%%%%%%%%%%%%%%%%
%s1.3 #&#
\subsection{Tail distribution of \texorpdfstring{$\tau_x$}{$tau_x$} and a conditioned limit theorem}
Let
\[
K^\varepsilon=\bigl\{y\in\rd\dvtx \operatorname{ dist}(y,x)<\varepsilon|x| \mbox{ for
some } x \in K\bigr\}. %
\]
This new region is a cone. It follows from our assumptions
that we can pick a sufficiently small $\varepsilon>0$ which will
ensure that $K\subset K^{4\varepsilon}\subset\widetilde K$.
Recall that $u(x)$ is harmonic on a
bigger cone $\widetilde K$ and, therefore,
\[
u(x) \mbox{ is harmonic on } K^{4\varepsilon}. %
\]

Having $u$ we define a new function
\[
v(x)= \cases{ %
u(x),&\quad $x\in G,$
\vspace*{2pt}\cr
|x|^{p-a},&\quad $\mbox{otherwise},$}
\]
where
\[
G=K^\varepsilon\cap \bigl(K\cup\bigl\{x\in K^c\dvtx \operatorname{
dist}(x,\partial K)\leq |x|^{1-a}\bigr\} \bigr). %
\]
We will pick a sufficiently small constant $a>0$ later.

Let
%
%e4 #&#
\begin{equation}
\label{eq:defn.f} f(x)=\e v(x+X)-v(x),\qquad x\in K.
\end{equation}
Note that if $v(x+S(n))$ is a martingale, then $f(x)=0$.
Then let
%
%e5 #&#
\begin{equation}
\label{eq:defn.v} V(x)=v(x)-\e v\bigl(x+S(\tau_x)\bigr)+\e\sum
_{k=0}^{\tau_{x}-1}f\bigl(x+S(k)\bigr).
\end{equation}
It is not at all clear if function $V(x)$ is well defined. More
precisely, one has to show that $v(x+S(\tau_x))$ and
$\sum_{k=1}^{\tau_{x}-1}f(x+S(k))$ are integrable. Furthermore, one
has to show that $V$ does not depend on choice of $a$ and $\varepsilon$
from the definition of
$G$.

Finally, we define
\begin{eqnarray*}
K_+&:=& \bigl\{x\in K\dvtx \mbox{ there exists }\gamma>0\mbox{ such that for
every } R>0
\\
&&\hspace*{6pt}\mbox{there exists }n\mbox{ such that } \mathbf{P}\bigl(x+S(n)\in
D_{R,\gamma},\tau_x>n\bigr)>0 \bigr\},
\end{eqnarray*}
where
$D_{R,\gamma}:=\{x\in K\dvtx |x|\geq R, \operatorname{ dist}(x,\partial K)\geq\gamma
|x|\}$.

%th1 #&#
\begin{theorem}\label{T}
Assume the normalization as well as the moment assumption hold. Then,
for sufficiently small $a$,
the function $V$ is finite and harmonic for the
killed random walk $\{S(n)\}$, that is,
%
%e6 #&#
\begin{equation}
\label{Harm.F} \mathbf{E}\bigl[V\bigl(x+S(1)\bigr),\tau_x>1
\bigr]=V(x).
\end{equation}
The function $V(x)$ is strictly positive on the set $K_+$.
Moreover, as $n\to\infty$,
%
%e7 #&#
\begin{equation}
\label{eq.asym.tau} \mathbf P(\tau_x>n)\sim\varkappa V(x)
n^{-p/2},\qquad x\in K,
\end{equation}
where $\varkappa$ is an absolute constant.
\end{theorem}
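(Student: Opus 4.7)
The strategy is to exploit the fact that the Brownian harmonic $u$ (extended to $\widetilde K$) is only approximately harmonic for the random walk, with harmonicity defect $f$, and to correct this defect by a martingale construction. The process
$$M_n := v(x+S(n)) - \sum_{k=0}^{n-1} f(x+S(k))$$
is a martingale by the very definition of $f$. Applying optional stopping at $\tau_x \wedge n$ and separating the overshoot $\mathbf{E}[v(x+S(n));\tau_x>n]$ from the exit term $\mathbf{E}[v(x+S(\tau_x));\tau_x\le n]$, then sending $n\to\infty$, identifies $V(x)$ as the finite object surviving in the limit. Once integrability is established, the harmonicity \eqref{Harm.F} follows from a strong-Markov computation at time $1$.

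The technical backbone is a quantitative bound on $|f|$. For $x\in K$ with $\mathrm{dist}(x,\partial K)\ge|x|^{1-a}$ the relevant neighbourhood of $x$ lies in $G$, where $v=u$ is smooth and harmonic on $\widetilde K$; a Taylor expansion combined with $\mathrm{cov}(X)=I$ (so $\tfrac12\Delta u=0$ kills the second-order term) and $\mathbf{E}|X|^\alpha<\infty$ yields $|f(x)|\le C|x|^{p-\alpha}$. Near $\partial K$, an increment may carry the walk outside $K^\varepsilon$ into the regime $v(y)=|y|^{p-a}$; the $C^2$ or convex hypothesis on $K$ combined with a sufficiently small $a$ produces a bound of the same order. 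These decay estimates, together with McConnell's moment result $\mathbf{E}\tau_x^q<\infty$ for $q<p/2$ and Wald-type control of the exit position, give absolute convergence of $\mathbf{E}v(x+S(\tau_x))$ and $\mathbf{E}\sum_{k=1}^{\tau_x-1}|f(x+S(k))|$. Independence of $V$ from $(a,\varepsilon)$ then follows by comparing two constructions and observing that their difference is a killed-harmonic function of strictly lower-order growth, hence identically $0$. Positivity on $K_+$ is iterated harmonicity: for $x\in K_+$ pick $n$ with $\mathbf{P}(x+S(n)\in D_{R,\gamma},\tau_x>n)>0$ for $R$ large; on $D_{R,\gamma}$, $V$ is comparable to the positive function $u$, so $V(x)\ge\mathbf{E}[V(x+S(n));\tau_x>n]>0$.

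The tail asymptotic \eqref{eq.asym.tau} is the deepest statement and rests on the Brownian coupling. The Koml\'os--Major--Tusn\'ady strong approximation provides a coupling with $|S(k)-B(k)|=o(k^{1/2-\delta})$ almost surely, for some $\delta>0$ dictated by $\alpha$. Choose an intermediate time $n_1=n^\theta$ with $\theta$ close to but strictly less than $1$. By the Markov property,
$$\mathbf{P}(\tau_x>n)=\mathbf{E}\bigl[\mathbf{P}_{x+S(n_1)}(\tau>n-n_1);\,\tau_x>n_1\bigr].$$
On the typical event that $x+S(n_1)$ lies at distance $\gtrsim n_1^{1/2}$ from $\partial K$, the inner probability is well-approximated by its Brownian analogue, which by \eqref{BS} is asymptotic to $\kappa u(x+S(n_1))(n-n_1)^{-p/2}$. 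Hence
$$\mathbf{P}(\tau_x>n)\sim\kappa n^{-p/2}\,\mathbf{E}[u(x+S(n_1));\tau_x>n_1],$$
and the martingale identity of the first paragraph, together with the bounds of the second, shows that this expectation converges to $V(x)$ as $n_1\to\infty$, yielding \eqref{eq.asym.tau}.

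The principal obstacle is the ``boundary-layer'' contribution --- paths $\{x+S(k)\}$ that linger within $|S(k)|^{1-a}$ of $\partial K$. Precisely on these configurations $v$ differs from $u$, the pointwise bound on $|f|$ degrades, and the KMT coupling loses accuracy relative to the discrete exit event. The buffer width $|x|^{1-a}$, the enlarged cone $\widetilde K$, and the minimal moment exponent $\alpha$ are tuned so as to make this contribution negligible; carrying out the estimates uniformly for a general $C^2$ or convex cone (rather than a Weyl chamber with algebraic simplifications) is where the bulk of the technical work concentrates.
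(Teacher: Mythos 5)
Your overall architecture coincides with the paper's: the martingale $v(x+S(n))-\sum_{k<n}f(x+S(k))$, finiteness of $V$ via a decay bound on $f$ together with McConnell's moment estimates, positivity by iterating harmonicity into $D_{R,\gamma}$, and the tail asymptotics via the Markov property once the walk is far from $\partial K$, a strong approximation, the Brownian tail \eqref{BS}, and the identification of $\lim_m\mathbf{E}[u(x+S(m));\tau_x>m]$ with $V(x)$ (the paper's Lemma~\ref{lem0}). But two steps are asserted exactly where the work lies. First, the claimed bound $|f(x)|\le C|x|^{p-\alpha}$ is false in general: when the third moments of $X$ do not vanish, the third-order Taylor term makes $f(x)$ of order $|x|^{p-3}$ for smooth cones with $p>3$, so the best available estimate is $|f(x)|\le C|x|^{p-2-\delta}$ (Lemma~\ref{lem:bound.f}); this weaker bound still gives $\mathbf{E}\sum_{k<\tau_x}|f(x+S(k))|<\infty$, but only after the H\"older/concentration bookkeeping with $\mathbf{E}\tau_x^{p'}$ and $\mathbf{E}M^{q'}(\tau_x)$ that your sketch omits. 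Relatedly, your uniqueness argument (``the difference of two constructions is killed-harmonic of lower-order growth, hence $0$'') appeals to a Liouville-type theorem that is not available; the paper gets independence of $(a,\varepsilon)$ directly from the representation $V(x)=\lim_n\mathbf{E}[u(x+S(n));\tau_x>n]$.

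Second, and more seriously, in the asymptotic step the phrase ``on the typical event that $x+S(n_1)$ lies at distance $\gtrsim n_1^{1/2}$ from $\partial K$'' conceals the two estimates that carry the proof. (i) With a deterministic time $n_1=n^{\theta}$, the event that $x+S(n_1)$ sits in the boundary layer $\{{\rm dist}(\cdot,\partial K)\le n^{1/2-\varepsilon}\}$ is not negligible a priori; there the coupling cannot decide the exit event and the comparison with \eqref{BS} degenerates, and the convergence $\mathbf{E}[u(x+S(n_1));\tau_x>n_1]\to V(x)$ does not by itself show that the boundary-layer part of this expectation vanishes --- a uniform-integrability statement is needed. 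The paper avoids this by stopping at $\nu_n$, the first entrance into $K_{n,\varepsilon}$, proving by an iterated CLT/small-ball argument (Lemma~\ref{lem1}) that $\nu_n\le n^{1-\varepsilon}$ up to exponentially small probability, with Lemma~\ref{lem2} disposing of the remainder. (ii) The Brownian relation \eqref{BS} is an asymptotic equivalence only for $|y|\ll\sqrt n$; the contribution of $|x+S(n_1)|>\theta_n\sqrt n$ needs a separate truncation argument (the paper's Lemma~\ref{lem12}, via Fuk--Nagaev), absent from your sketch. Finally, an almost-sure KMT rate is not enough under the stated moment assumptions: one needs a quantitative coupling with an explicit polynomial exceedance bound (Sakhanenko, Lemma~\ref{lem4}), and one must check that the additive error $o(n^{-r})$, with $r<p/2$ in general, is actually absorbed --- in the paper because $u(y)n^{-p/2}\ge n^{-p\varepsilon}$ on $K_{n,\varepsilon}$, in your scheme because it is multiplied by $\mathbf{P}(\tau_x>n_1)$; this verification should be made explicit.
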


%
%If we assume that $\inf_{x\in\partial K}\mathbf{P}(x+X\in tx_0+K)>0$
%for some positive $t$ then $K_+=K$. (Recall that $x_0$ is such that
%$\operatorname{ dist}(x_0+K,\partial K)>0$.) It follows from the observation
%that every such
%jump increases the distance to the boundary linearly.

Our moment assumption is optimal in the sense that the asymptotic behavior
of $\mathbf P(\tau_x>n)$ is in general different if $\mathbf
E|X|^p=\infty$. Indeed, consider a cone with $p>2$
and let $X$ be of the form $X=R\xi$, where $R$ is a nonnegative random
variable with
\[
\mathbf{P}(R>u)\sim u^{-\alpha}, \qquad\alpha\in(2,p) %
\]
and $\xi$ takes values on the unit sphere with some positive density on
$\Sigma$.
Clearly, $\mathbf E|X|^p=\infty$, that is, the moment assumption is not
fulfilled.
It follows from the structure of $X$ that
\[
\mathbf{P}(x+X\in D_{\sqrt{n},\gamma})\sim n^{-\alpha/2}\mathbf {P}(\xi \in
\Sigma\cap D_{0,\gamma})\geq cn^{-\alpha/2} %
\]
for some positive
$c$ and all sufficiently small $\gamma$.
Then
\[
\mathbf{P}(\tau_x>n)\geq\mathbf{P}\bigl(x+X(1)\in
D_{\sqrt{n},\gamma
}\bigr)\mathbf{P} \Bigl(\max_{k\leq n-1}\bigl|S(k)\bigr|<
\gamma\sqrt{n} \Bigr) \geq cn^{-\alpha/2}, %
\]
where in the last step we used the functional central limit theorem.
Therefore, the tail of $\tau_x$ is heavier than that of $\tau_x^{\mathrm{bm}}$.
We conjecture that this lower bound is precise, that is,
\[
\mathbf{P}(\tau_x>n)\sim\theta\mathbf{E}\tau_x
n^{-\alpha/2}. %
\]
Behind this relation is the well-known principle of one big jump: in
order to stay up to large moment of time $n$ inside the cone,
it is sufficient to make one big (of
order $\sqrt{n}$) jump into the inner part of the cone $K$ near time $0$.

We note that assumptions $\mathbf EX_j^2=1,j=1,\ldots,d$ and
$\operatorname{cov}(X_i,X_j)=0$ do not restrict the generality. More precisely, if
they are not
fulfilled and the covariance matrix of $X$ is positive-definite,
then there exists a matrix $M$ such that $Y=MX$ satisfies
these conditions. (If the covariance matrix is not positive-definite, then
the random walk lives on a hyperplane.)
Therefore, we have a random walk confined to a new
cone $M(K)=\{Mx, x\in K\}$. In the following example, we show the
influence of the correlation on the tail behavior of $\tau_x$.

%ex2 #&#
\begin{example}
Consider a two-dimensional random walk with zero mean,
$\mathbf{E}X_1^2=\mathbf{E}X_2^2=1$ and
$\operatorname{cov}(X_1,X_2)=\rho\in(-1,1)$. Let $K$ be the positive quadrant,
that is, $K=\mathbb{R}_+^2$. In order to apply Theorem~\ref{T} we first
need to find a matrix $M$ such that the coordinates of the vector
$Y=MX$ become uncorrelated. Let $\varphi$ solve the equation $\sin
2\varphi=\rho$. Then the matrix
\[
M=\frac{1}{\sqrt{1-\rho^2}} \pmatrix{\cos\varphi&-\sin\varphi
\vspace*{2pt}\cr
-\sin\varphi&\cos\varphi}
\]
leads to uncorrelated coordinates. Therefore, $M(K)$ has opening
$\arccos(-\rho)$. Then, as it has been shown by Burkholder
\cite{Burk77}, $p=\pi/\arccos(-\rho)$. If\break
$\mathbf{E}|X|^{\pi/\arccos(-\rho)}$ is finite, then according to
Theorem~\ref{T}, we have
\[
\mathbf{P}(\tau_x>n)\sim V(x)n^{-\pi/2\arccos(-\rho)}\qquad\mbox{as }n\to
\infty. %
\]
It is worth mentioning that the minimal moment condition depends on
the covariance between $X_1$ and $X_2$.
\end{example}

If $V$ is harmonic for the random walk $MS(n)$ in the cone
$M(K)$, we have also a harmonic function for $S(n)$ in $K$. Indeed,
one can easily verify that $V(Mx)$ possesses this property.

We now turn to the asymptotic behavior of $S(n)$ conditioned to stay
in $K$. To state our results, we introduce the limit process.
For the
$d$-dimensional Brownian motion with starting point $x\in K$, one can
define a Brownian motion conditioned to stay in the cone via Doob's h-transform.
For that, we make a change of measure using the harmonic function $u$:
\[
\mathbf{\widehat P}_x^{(u)}\bigl(B(t)\in dy\bigr)=
\mathbf P\bigl(x+B(t)\in dy, \tau^{\mathrm{bm}}_x>t \bigr)
\frac{u(y)}{u(x)}. %
\]
This is possible since $u(x)>0$ inside the cone and $u(x+B(t\wedge\tau
^{\mathrm{bm}}_x))$ is a martingale.
Similarly, we define a random walk conditioned to stay in the cone $K$ by
\[
\mathbf{\widehat P}_x^{(V)}\bigl(S(n)\in dy\bigr)=
\mathbf P\bigl(x+S(n)\in dy, \tau_x>n \bigr)\frac{V(y)}{V(x)},\qquad x\in
\bigl\{x\dvtx V(x)>0\bigr\}. %
\]
This is possible due to Theorem~\ref{T}, where harmonicity of $V$ is proved.
We note also, that if we choose the starting point in $K_+$ then $S(n)$ under
$\mathbf{\widehat P}_x^{(V)}$ lives on $\{x\dvtx V(x)>0\}$, since this measure
does not allow transitions to the set $\{x\dvtx V(x)=0\}$.

%
%th3 #&#
\begin{theorem}\label{T2}
Assume that the normalization as well as the moment assumption are
fulfilled, then
%
%e8 #&#
\begin{equation}
\label{eq.cond.dist} \mathbf P \biggl(\frac{x+S(n)}{\sqrt n} \in\cdot \Big| \tau
_x>n \biggr)\to\mu\qquad\mbox{weakly},
\end{equation}
where $\mu$ is the probability measure on $K$ with the density
$H_0u(y)e^{-|y|^2/2}$, where $H_0$ is the normalizing constant.

Furthermore, for every $x\in K$, the process
$X^n(t)=\frac{S([nt])}{\sqrt n}$ under the probability measure
$\mathbf{\widehat P}^{(V)}_{x\sqrt n}$ converges weakly in the uniform
topology on $D[0,\infty)$
to the Brownian motion under the measure $\mathbf{\widehat P}^{(u)}_{x}$.
\end{theorem}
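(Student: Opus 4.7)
The plan is to derive both statements from Theorem~\ref{T}, the Ba\~nuelos--Smits conditional limit theorem for Brownian motion in cones, and a KMT-type strong approximation of the random walk by Brownian motion. Throughout I would couple $S(k)$ with a standard Brownian motion $B(k)$ so that $\max_{k\le n}|S(k)-B(k)|\le n^{1/2-\delta}$ with overwhelming probability; the moment assumption and the regularity assumptions on $X$ licence this.

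For the integral limit theorem (\ref{eq.cond.dist}), by Theorem~\ref{T} it suffices to show that for every bounded continuous $\varphi$ with support bounded away from $\partial K$,
\[
\mathbf{E}\bigl[\varphi((x+S(n))/\sqrt{n});\tau_x>n\bigr]\sim \varkappa V(x) n^{-p/2}H_0\int_K\varphi(y)u(y)e^{-|y|^2/2}\,dy.
\]
I would first apply the Markov property at an intermediate time $m_n$ with $m_n\to\infty$ and $m_n/n\to 0$, obtaining $\mathbf{E}[g_n(x+S(m_n));\tau_x>m_n]$, where
\[
g_n(y):=\mathbf{E}[\varphi((y+S(n-m_n))/\sqrt{n});\tau_y>n-m_n].
\]
On $[m_n,n]$ I replace $S$ by $B$ via the KMT coupling and invoke the Brownian asymptotic
\[
\mathbf{P}(\tau^{bm}_y>t, B(t)/\sqrt t\in A)\sim \kappa\, u(y)\, t^{-p/2}H_0\int_A u(z)e^{-|z|^2/2}\,dz
\]
to conclude that $g_n(y)\sim \kappa\,n^{-p/2}u(y)H_0\int_K\varphi(z)u(z)e^{-|z|^2/2}\,dz$, uniformly for $y$ at scale $\sqrt{m_n}$ well inside $K$. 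The outer expectation then reduces to $\mathbf{E}[u(x+S(m_n));\tau_x>m_n]$, which by harmonicity of $V$ for the killed walk and the fact that $V\sim u$ on the interior of $K$ at large scales (as is visible from the construction (\ref{eq:defn.v})) tends to $\varkappa V(x)$ times the correct constant.

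For the functional convergence under $\widehat{\mathbf{P}}^{(V)}_{x\sqrt n}$, I would establish convergence of finite-dimensional distributions by iterating the integral limit theorem via the Markov property at times $nt_1<\dots<nt_k$; the weights $V(S([nt_i]))/V(x\sqrt n)$ converge to the corresponding $u$-weights because $V(y)/u(y)\to 1$ when $|y|\to\infty$ with $y$ in the interior of $K$, so in particular $V(x\sqrt n)/n^{p/2}\to u(x)$. Tightness under $\widehat{\mathbf{P}}^{(V)}_{x\sqrt n}$ follows from Donsker tightness of the unconditioned walk combined with a uniform bound on the Radon--Nikodym derivative $\mathbf{1}\{\tau_{x\sqrt n}>[nt]\}V(S([nt]))/V(x\sqrt n)$ on paths bounded away from $\partial K$. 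The limit is then identified with $\widehat{\mathbf{P}}^{(u)}_x$ by matching Radon--Nikodym derivatives: the scaled walk converges to standard Brownian motion, and the $V$-weight converges to the $u$-weight, which is precisely the $h$-transform density defining $\widehat{\mathbf{P}}^{(u)}_x$.

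The main obstacle, common to both parts, is the boundary coupling: the KMT error $n^{1/2-\delta}$ is comparable to the transition scale $|y|^{1-a}$ at which $V$ is defined piecewise in (\ref{eq:defn.v}), so one must rule out the probability that $S$ exits $K$ inside an $n^{1/2-\delta}$-tube around $\partial K$ while $B$ does not, and vice versa. This is controlled by the regularity of $\partial K$ supplied by star-likeness with $C^2$ boundary or convexity, together with the vanishing of $m_1$ on $\partial\Sigma$, which yields estimates of the form $u(y)\le C\,\mathrm{dist}(y,\partial K)^{\eta}$ for some $\eta>0$. Once these uniform boundary estimates are in hand, the remainder of the argument is assembly of Theorem~\ref{T}, the KMT coupling and the Brownian conditional limit theorem.
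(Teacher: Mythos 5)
Your overall strategy for the integral limit theorem is the same as the paper's (split at an intermediate time, transfer to Brownian motion by strong approximation, use the Brownian asymptotics for $\mathbf{P}(\tau^{bm}_y>t,\,B(t)/\sqrt t\in A)$, and identify the constant through the construction of $V$), but there are genuine gaps in the way you implement it. First, you stop at a \emph{deterministic} time $m_n$, whereas the coupling transfer $g_n(y)\sim\varkappa_0 u(y)n^{-p/2}\int_A u e^{-|z|^2/2}dz$ is only valid uniformly for $y$ with ${\rm dist}(y,\partial K)\gg n^{1/2-\gamma}$ (one needs $u(y\pm x_0n^{1/2-\gamma})\sim u(y)$, which in turn needs the \emph{lower} bound $u(y)\geq c\,{\rm dist}(y,\partial K)^p$ of Lemma \ref{lem66} to beat the coupling error $n^{-r}$ --- your inequality $u(y)\le C\,{\rm dist}(y,\partial K)^\eta$ points the wrong way). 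At a deterministic time the surviving walk is near $\partial K$ with a probability that is only polynomially small, and you give no argument that this contribution is negligible; the paper avoids the problem altogether by stopping at the random time $\nu_n=\min\{k:\ x+S(k)\in K_{n,\varepsilon}\}$ and proving (Lemmas \ref{lem1}, \ref{lem2}) that $\{\nu_n>n^{1-\varepsilon},\tau_x>n^{1-\varepsilon}\}$ contributes only exponentially small terms. Second, since $u$ grows like $|y|^p$, the identification of the outer expectation with $V(x)$ requires cutting off the event $|S(\nu_n)|>\theta_n\sqrt n$ and showing that $\mathbf{E}[|x+S(\nu_n)|^p;\dots]$ on this event vanishes; this is the paper's Lemma \ref{lem12} (Fuk--Nagaev inequalities) and is absent from your plan. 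Finally, the convergence of the restricted expectation $\mathbf{E}[u(x+S(\nu_n));\tau_x>\nu_n,\nu_n\le n^{1-\varepsilon}]\to V(x)$ is itself a nontrivial optional-stopping argument for the corrected martingale $Y_n$ (Lemma \ref{lem11}); "harmonicity of $V$ plus $V\sim u$" does not by itself give it.

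For the functional statement your ingredients ($V/u\to1$ deep inside the cone, Donsker for the unconditioned walk, identification of the $h$-transform densities) are essentially those of the paper, but two points do not work as stated. The Radon--Nikodym weight $\mathbf 1\{\tau_{x\sqrt n}>n\}V(x\sqrt n+S(n))/V(x\sqrt n)$ is \emph{not} uniformly bounded, even on paths away from $\partial K$, because $V$ grows polynomially at infinity; what is needed is uniform integrability of the endpoint weight, which the paper obtains through the Fuk--Nagaev bound \eqref{tail1} together with the boundary-layer estimate \eqref{tail2} (via the concentration inequality \eqref{esseen}), before applying the invariance principle to $f(\cdot)\,u(x+\cdot(1))$ restricted to the good set $D_n$. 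Also note that for this part no KMT coupling is needed at all: the starting point $x\sqrt n$ is at macroscopic distance from $\partial K$, so the classical Donsker theorem plus the continuous mapping theorem (the discontinuities of the survival indicator and of $\mathbf 1_{D^{bm}}$ being Wiener-null) suffice; attributing the boundary difficulty here to the coupling scale $n^{1/2-\delta}$ misidentifies where the work lies. With the truncation estimates \eqref{tail1}--\eqref{tail4} supplied, your scheme (whether phrased as finite-dimensional distributions plus tightness or, as in the paper, directly through expectations of bounded uniformly continuous functionals) would go through.
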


This is an extension of the classical theorems for one-dimensional
random walks conditioned
to stay positive by Iglehart~\cite{Ig74} and Bolthausen~\cite{Bolt76}.
Shimura \cite{Shim84} and Garbit \cite{Gar10} have proven similar
results for two-dimensional random walks in convex cones.

%re4 #&#
\begin{remark}\label{R1}
It is worth mentioning that one can prove Theorems \ref{T} and
\ref{T2} for more general cones if one restricts themselves to a smaller
class of random walks. If the jumps of $\{S(n)\}$ are bounded, then
the possibility to extend $u$ to a bigger cone is superfluous. In
this situation, one can show that $V$ is harmonic for arbitrary
starlike cone if we define $v$ by the relation
\[
v(x)=u(x_*+x)\qquad\mbox{for an appropriate }x_*\in K. %
\]
(For details, see Section~\ref{bounded}.) Having constructed the
harmonic function $V$, the proofs of all asymptotic statements from
Theorems \ref{T} and \ref{T2} do not require any change. As a result,
we get an asymptotic counterpart of the results proven by Varopoulos
\cite{Var99,Var00}. He derived upper and lower bounds for
probabilities $\mathbf{P}(\tau_x>n)$ and
$\mathbf{P}(x+S(n)=y,\tau_x>n)$ in terms of the harmonic function
$u$. All his bounds have the right order in $n$. In order to obtain
these estimates, he constructs superharmonic and subharmonic functions
for $\{S(n)\}$ in terms of $u$. (It is equivalent to construction of
super- and submartingale from $\{S(n)\}$.) And in order to obtain
asymptotic results we construct a harmonic function (martingale) for
the random walk. This is the main difference between our approach and
that of Varopoulos. %\hfill$\diamond$
\end{remark}

%s1.4 #&#
\subsection{Local limit theorems}
In this paragraph, we are going to determine the asymptotic behavior
of local probabilities for random walks conditioned to stay in a
cone. As it is usual in studying local probabilities, one has to
distinguish between lattice and nonlattice cases. We shall consider
lattice walks only, and analogous results for nonlattice walks can be
proved in the same way. The reason to choose the lattice case is an
application of the local limit theorems we prove here to lattice path
counting problems, which are very popular in combinatorics. Another
interesting application of local limit theorems could be the study of
the asymptotic behavior of the Green function for random walks in a
cone. This, combined with the Martin boundary theory, will allow to
find all harmonic functions.
\begin{itemize}
\item\textit{Lattice assumption}: $X$ takes values on a
lattice $R$ which is a nondegenerate linear transformation of $\mathbb{Z}^d$.
Furthermore, we assume that the distribution of $X$ is strongly aperiodic,
that is, for every $x\in R$, the smallest subgroup of $R$ which
contains the set
\[
\bigl\{y\dvtx y=x+z\mbox{ with some }z\mbox{ such that }\mathbf {P}(X=z)>0 \bigr
\} %
\]
is $R$ itself.
\end{itemize}

We first state a version of the Gnedenko local limit theorem.

%
%th5 #&#
\begin{theorem}\label{Loc.T1}
Assume that the assumptions of Theorem~\ref{T} and the lattice
assumption hold. Then
%
%e9 #&#
\begin{equation}
\label{Loc.T1.1} \sup_{y\in K}\biggl\llvert n^{p/2+d/2}
\mathbf{P} \bigl(x+S(n)=y,\tau _x>n \bigr)- \varkappa
V(x)H_0 u \biggl(\frac{y}{\sqrt{n}} \biggr)e^{-|y|^2/2n}\biggr
\rrvert \to0.\hspace*{-20pt}
\end{equation}
\end{theorem}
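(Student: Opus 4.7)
The plan is to prove the local limit theorem by time-splitting at an intermediate scale $m=m(n)$ with $1\ll m\ll n$ (say $m=\lfloor n^{1-\delta}\rfloor$ for small $\delta>0$). By the Markov property
\[
\mathbf{P}(x+S(n)=y,\tau_x>n)=\sum_{z\in K\cap R}\mathbf{P}(x+S(n-m)=z,\tau_x>n-m)\,q_m(z,y),
\]
where $q_m(z,y):=\mathbf{P}(z+S'(m)=y,\tau'_z>m)$ and $S'$ is an independent copy of $S$. Theorem \ref{T2} tells us that the typical $z$ appearing in this sum lies at scale $\sqrt n$ and at distance of order $\sqrt n$ from $\partial K$, while Theorem \ref{T} controls its total mass. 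The guiding idea is that for such $z$ the walk $z+S'(\cdot)$ does not feel the boundary over the final $m=o(n)$ steps, so $q_m(z,y)$ is essentially the free local probability, on which the classical Gnedenko LLT applies (under the lattice and moment hypotheses).

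Concretely, fix small $\gamma>0$ and large $L$, and let $G_n=\{z\in K:\gamma\sqrt n\le{\rm dist}(z,\partial K),\;|z|\le L\sqrt n\}$. For $z\in G_n$, a Kolmogorov maximal inequality (or the KMT-type coupling with Brownian motion) gives $\mathbf{P}(\tau'_z\le m)=o(1)$, so by Gnedenko's LLT, uniformly in $y\in K$ and $z\in G_n$,
\[
q_m(z,y)=\frac{1}{(2\pi m)^{d/2}}\,e^{-|y-z|^2/(2m)}(1+o(1))+o(m^{-d/2}).
\]
Substituting and rescaling $z=\sqrt{n-m}\,\zeta$, the integral limit theorem (Theorem \ref{T2}) and Theorem \ref{T} together yield, for the sum restricted to $G_n$,
\[
\varkappa V(x)(n-m)^{-p/2}\int_{\widetilde G_n}H_0\,u(\zeta)\,e^{-|\zeta|^2/2}\,\frac{e^{-|y-\sqrt{n-m}\zeta|^2/(2m)}}{(2\pi m)^{d/2}}\,d\zeta,
\]
with $\widetilde G_n$ the rescaled good set. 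Because $\sqrt{m/(n-m)}\to 0$, the Gaussian kernel acts as an approximate delta at $\zeta=y/\sqrt{n-m}$; a Laplace-type computation, combined with the homogeneity $u(\lambda\zeta)=\lambda^p u(\zeta)$ and continuity of $u$ up to $\partial K$, identifies the limit as $\varkappa V(x)\,n^{-p/2-d/2}\,H_0\,u(y/\sqrt n)\,e^{-|y|^2/(2n)}$.

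The main obstacle is the contribution of the bad set $K\setminus G_n$ -- namely $z$ within $\gamma\sqrt n$ of $\partial K$ or of atypical magnitude -- which must be shown to be negligible \emph{uniformly in} $y\in K$. For such $z$ the reduction to the free LLT fails because $\mathbf{P}(\tau'_z\le m)$ is not small. My approach is to first establish an off-diagonal upper bound of the form
\[
\mathbf{P}(x+S(k)=z,\tau_x>k)\le C\,V(x)\,k^{-p/2-d/2}\,u(z/\sqrt k)\,e^{-c|z|^2/k},
\]
uniformly in $z\in K\cap R$ and $k\ge 1$; this is the local counterpart of Theorem \ref{T} and can be obtained from the KMT-type strong approximation together with standard heat-kernel estimates for Brownian motion killed on $\partial K$. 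Paired with the trivial bound $q_m(z,y)\le\mathbf{P}(S'(m)=y-z)=O(m^{-d/2})$, the bad-set contribution reduces to an integral of $u(\zeta)e^{-c|\zeta|^2}$ over a boundary layer of width $\gamma$ and a tail at scale $L$, which tends to $0$ on sending first $n\to\infty$ and then $\gamma\to 0$, $L\to\infty$ (using $u\big|_{\partial K}=0$). Uniformity in $y$ then follows from the uniform Gnedenko LLT, uniform continuity of $u$ on compact subsets of $\overline K$, and the same off-diagonal bound applied at $k=n$ to handle $y$ close to $\partial K$ or of small/large norm, where both sides of the target asymptotic are themselves small.
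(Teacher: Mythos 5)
Your overall architecture (split at time $n-m$, replace the last $m$ steps by the free local limit theorem for $z$ far from $\partial K$, evaluate the first factor by the integral limit theorem, control a bad set separately) is the same as the paper's, but your choice $m=\lfloor n^{1-\delta}\rfloor$ creates a genuine gap at the central step. After rescaling by $\sqrt{n}$, the Gaussian kernel $e^{-|y-z|^2/(2m)}$ becomes a function that concentrates on a window of width $\sqrt{m/n}\to 0$; Theorem \ref{T2} is only a weak convergence statement at scale $\sqrt n$, and weak convergence does not allow you to integrate against test functions whose support shrinks with $n$ --- that is exactly the gap between an integral and a local limit theorem, i.e.\ the content of the statement you are proving, so the ``Laplace-type computation / approximate delta'' step is circular. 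A second quantitative problem with $m=o(n)$: the uniform Gnedenko error is only $o(m^{-d/2})$, and summing it against the conditioned mass $\mathbf{P}(\tau_x>n-m)\asymp V(x)n^{-p/2}$ gives $o\bigl(m^{-d/2}n^{-p/2}\bigr)$, which is larger by the factor $(n/m)^{d/2}=n^{\delta d/2}$ than the required precision $o(n^{-p/2-d/2})$; without a rate in the LLT (not available under the paper's minimal moment assumptions) this cannot be absorbed. The paper avoids both issues by taking $m=[\varepsilon^3 n]$, a fixed proportion of $n$: then the kernel, rescaled by $\sqrt n$, is a fixed continuous function, Theorem \ref{T2} applies legitimately, all error terms are of size $\varepsilon^{-3d/2}e^{-a/\varepsilon}$ relative to $n^{-p/2-d/2}$, and only afterwards does one send $\varepsilon\to 0$ (and $A\to\infty$).

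Two further points. Your treatment of the bad set and of $y$ close to $\partial K$ rests on an ``off-diagonal upper bound'' $\mathbf{P}(x+S(k)=z,\tau_x>k)\le CV(x)k^{-p/2-d/2}u(z/\sqrt k)e^{-c|z|^2/k}$ that you do not prove; asserting that it follows ``from the KMT-type strong approximation together with heat-kernel estimates'' is not a proof, since a coupling with accuracy $n^{1/2-\gamma}$ in space controls probabilities of macroscopic events but gives no access to point masses on the lattice, and such a bound is essentially the upper-bound half of the theorem itself. The paper gets by with much cruder tools: Esseen's concentration bound plus time reversal (Lemmas \ref{Loc.L1}--\ref{Loc.L2}), the estimate \eqref{Loc.L3.2} for trajectories that touch $\partial K$ but have \emph{both} endpoints at distance $\ge u\sqrt n$ from the boundary (note that your claim $q_m(z,y)=\mathbf{P}(z+S(m)=y)(1+o(1))+o(m^{-d/2})$ needs exactly this two-sided condition, since bounding $\mathbf{P}(z+S(m)=y,\tau_z\le m)$ requires reversing time from $y$ as well), and, for $y$ in the boundary layer, time reversal combined with a hitting estimate for Brownian motion (parabolic boundary Harnack, or the elementary hyperplane/reflection argument in the convex case). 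You would need to supply substitutes for each of these, in addition to repairing the choice of $m$.
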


To prove a local limit theorem in the one-dimensional case, that is, for
random walks conditioned to stay positive, one starts usually from the
Wiener--Hopf factorization; see \cite{BD06,Car05,VW09}. Our approach
is completely different, and uses the integral limit theorem for
conditioned random walks (Theorem~\ref{T}) and a local limit theorem for
unconditioned random walks. Therefore, it works in all dimensions and
all cones, where Theorem~\ref{T} holds. In particular, our method
gives simple probabilistic proofs of local limit theorems for random
walks conditioned to stay positive.

We next find asymptotic behavior of
$\mathbf{P} (x+S(n)=y,\tau_x>n )$ for fixed $y$. [Note that
Theorem~\ref{Loc.T1} says only that this probability is
$o(n^{-p/2-d/2})$.]

%th6 #&#
\begin{theorem}\label{Loc.T2}
Under the assumptions of the preceding theorem, for every fixed
$y\in K$,
%
%e10 #&#
\begin{equation}
\label{Loc.T1.2} \mathbf{P} \bigl(x+S(n)=y,\tau_x>n \bigr)\sim
\varrho H_0^2\frac
{V(x)V'(y)}{n^{p+d/2}},
\end{equation}
where $V'$ is the harmonic function for the random walk $\{-S(n)\}$
and
\[
\varrho=\varkappa^2\int_K
u^2(w)e^{-|w|^2/2}\,dw. %
\]
Furthermore, for every $t\in(0,1)$ and any compact $D\subset K$
%
%e11 #&#
\begin{equation}
\label{Loc.T1.3} \mathbf{P} \biggl(\frac{x+S([tn])}{\sqrt{n}}\in D \Big|\tau
_x>n,x+S(n)=y \biggr)\to Q_t(D),
\end{equation}
where $Q_t$ is the measure on $K$ with the density
\[
\frac{1}{\rho(2\pi)^d}\frac{1}{ (t(1-t)
)^{p+d/2}}u^2(z)e^{-|z|^2/2t(1-t)}\,dz.
\]
\end{theorem}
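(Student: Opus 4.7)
The plan is to apply Theorem~\ref{Loc.T1} twice---forward from the starting point $x$, and (after time reversal) from the fixed endpoint $y$---and to read off (\ref{Loc.T1.2}) from the resulting Riemann sum.

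I would decompose via the Markov property at $m=\lfloor n/2\rfloor$:
\begin{equation*}
  \mathbf{P}(x+S(n)=y,\tau_x>n) = \sum_{z\in R} \mathbf{P}(x+S(m)=z,\tau_x>m)\,\mathbf{P}(z+S(n-m)=y,\tau_z>n-m).
\end{equation*}
Theorem~\ref{Loc.T1} controls the first factor uniformly in $z$. For the second factor I would invoke the time-reversal identity
\[
\mathbf{P}(z+S(n-m)=y,\tau_z>n-m) = \mathbf{P}(y+S'(n-m)=z,\tau'_y>n-m),
\]
where $S'$ is the walk with increments $-X$ and $\tau'_y$ its exit time from $K$, and apply Theorem~\ref{Loc.T1} to $S'$ (whose harmonic function is precisely the $V'$ of the statement). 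The change of variables $z=w\sqrt{m(n-m)/n}$ then diagonalises the combined quadratic form, while the degree-$p$ homogeneity of $u$ turns $u(z/\sqrt m)\,u(z/\sqrt{n-m})$ into an explicit power of $m(n-m)/n^2$ times $u^2(w)$. After bookkeeping, the sum becomes a Riemann sum for $\int_K u^2(w)\,e^{-|w|^2/2}\,dw=(2\pi)^d\varrho$, and the prefactors collapse precisely into $n^{-(p+d/2)}$, yielding (\ref{Loc.T1.2}). Consistency of the computation fixes the absolute constant from Theorem~\ref{T} as $\varkappa=(2\pi)^{-d/2}$.

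For (\ref{Loc.T1.3}) I would rerun the same argument with split point $[tn]$ in place of $\lfloor n/2\rfloor$, restricting the intermediate variable to $z/\sqrt n\in D$. Substituting $z=w\sqrt n$, the two uniform asymptotics combine into
\[
\frac{\varkappa^2 V(x)V'(y)H_0^2}{n^{p+d/2}(t(1-t))^{p+d/2}}\,u^2(w)\,e^{-|w|^2/2t(1-t)},
\]
and the Riemann sum over $z/\sqrt n\in D$ contributes the $n^{d/2}$ Jacobian. Dividing by the denominator just obtained in (\ref{Loc.T1.2}) cancels the factor $V(x)V'(y)H_0^2 n^{-(p+d/2)}$ and leaves exactly the density of $Q_t$ claimed.

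The main obstacle is justifying the Riemann-sum passage: Theorem~\ref{Loc.T1} gives the leading term uniformly in $z$, but the error $o(m^{-(p+d)/2})$ summed over $\asymp n^{d/2}$ lattice points could a priori dominate. I would handle this by truncating to $|z|\le M\sqrt n$ with $M\to\infty$ slowly, using the uniform Theorem~\ref{Loc.T1} error inside the box and Gaussian tail bounds from the unconditioned Gnedenko LLT outside, and by separately controlling the region near $\partial K$, where $u(z/\sqrt m)\,u(z/\sqrt{n-m})$ vanishes like $\mathrm{dist}(z,\partial K)^{2p}/(m(n-m))^{p/2}$ rapidly enough to keep the limiting integrand integrable.
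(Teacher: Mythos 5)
Your outline is the paper's own route: split at an intermediate time by the Markov property, reverse time in the second half so that Theorem \ref{Loc.T1} applied to the walk $\{-S(n)\}$ brings in $V'$, apply Theorem \ref{Loc.T1} to both halves, and pass to a Riemann sum using the homogeneity of $u$, with a truncation $|z|\le A\sqrt n$ removed at the end; for (\ref{Loc.T1.3}) the paper likewise splits at $[(1-t)n]$, restricts the intermediate sum to $z/\sqrt n\in D$ and divides by (\ref{Loc.T1.2}). Two of your technical steps, however, would not go through as written. First, the error control is both simpler and different from what you propose: the error in Theorem \ref{Loc.T1} is an \emph{additive} $o(m^{-p/2-d/2})$, uniform over \emph{all} of $K$ (boundary region included), so you apply it to one factor only and sum the other, exact, factor, which contributes at most $\mathbf{P}(\tau_x>n-m)\le Cn^{-p/2}$; this immediately gives a total error $o(n^{-p-d/2})$, and no separate analysis near $\partial K$ is needed. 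Second, your treatment of $|z|>M\sqrt n$ via ``Gaussian tail bounds from the unconditioned Gnedenko LLT'' fails: the LLT error is only $o(n^{-d/2})$ uniformly in $z$, with no spatial decay, so summed against the reversed factor it gives only $o(n^{-p/2-d/2})$, which is far too large. The paper instead bounds one factor uniformly by $C(x)n^{-p/2-d/2}$ (Lemma \ref{Loc.L1}, as in Lemma \ref{Loc.L2}) and controls $\sum_{|z|>A\sqrt n}$ of the other factor by $\mathbf{P}(\tau'_y>m)\,\mathbf{P}(|S'(m)|>A\sqrt n-|x|\mid\tau'_y>m)$, which is $C(y)n^{-p/2}$ times a quantity that vanishes as $A\to\infty$ by the conditioned limit theorem (Theorem \ref{T2}); you would need this (or an equivalent) argument.

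Finally, your remark that ``consistency fixes $\varkappa=(2\pi)^{-d/2}$'' is not a legitimate deduction: $\varkappa$ is already determined by the Brownian asymptotics (\ref{BS}) and is cone-dependent (for $d=1$, $K=(0,\infty)$, $u(x)=x$ one has $\varkappa=\sqrt{2/\pi}$, not $(2\pi)^{-1/2}$). What your computation actually produces is the constant $\varkappa^2H_0^2\int_K u^2(w)e^{-|w|^2/2}dw$ in front of $V(x)V'(y)n^{-p-d/2}$; identifying this with the displayed $\varrho H_0^2$ is a matter of constant bookkeeping (the paper's own display makes the same silent identification), and you should state the constant in the former, normalization-invariant form rather than ``derive'' a value of $\varkappa$ from it.
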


In the next two subsections, we mention some interesting applications
of this theorem.
%%%%%%%%%%%%%%%%%%%%%%%%%%%%%%%%%%%%%%%%%%%%%%%%%%%%%%%%%%%%%%%%%%%%%%%%%%%%%%%%%%%%%%%%%%%%%%%%%%%%%%
%%%%%%%%%%%%%%%%%%%%%%%%%%%%%%%%%%%%%%%%%%%%%%%%%%%%%%%%%%%%%%%%%%%%%%%%%%%%%%%%%%%%%%%%%%%%%%%%%%%%%
%s1.5 #&#
\subsection{Application to lattice paths enumeration}
Starting from the classical ballot problem, counting of lattice paths
confined to a certain domain, attracts a lot of attention. For lattice
paths in Weyl chambers associated with reflection groups, one often uses
a generalization of the classical reflection principle of Andre,
which has been proved by Gessel and Zeilberger \cite{GZ92}.
Unfortunately, the latter result can be applied only to some special
random walks
which are not allowed to jump over the boundary of the chamber.
Additionally, the
set of all possible steps should be invariant with respect to all
reflections. Grabiner and Magyar \cite{GM93} give the complete list of
all random walks to which the reflection principle can be
applied. Recently, the reflection principle of Gessel and Zeilberger
has been slightly generalized by Feierl \cite{F11+}. He derived a new
version of the reflection principle for random walks with steps which
are at most finite combinations of steps from the list of Grabiner and
Magyar. Another very popular cone is the positive quadrant in~$\mathbb{Z}^2$. Here, we mention papers of Bousquet-Melou \cite{BM05}
and of Bousquet-Melou and Mishna \cite{BM09}, where the authors
obtained exact results for some random walks with bounded steps in the
quarter plane. Raschel \cite{R09} and Kurkova and Raschel \cite{KR}
also considered a two-dimensional
random walk in the quarter plane, and proved some asymptotic results
for the exit position. All these papers are based on the analytical
approach suggested in the book of Fayolle, Iasnogorodski and Malyshev~\cite{FIM}. This method works for random walks on $\mathbb Z^2$
that can jump only to the nearest neighbors.

We next show how one can determine the asymptotic behavior of the
number of walks with endpoints $x$ and $y$ confined to a cone from our
results.

Consider lattice paths with the step set $\mathcal{S}=\{s_1,s_2,\ldots
,s_N\}$.
We assume that the corresponding random walk on $\mathbb{Z}^d$ is
strongly aperiodic.

If the vector sum of all $s_i$
is not equal to zero, one has to perform the Cramer transformation
with an appropriate parameter. For $R(h)=N^{-1}\sum_{i=1}^N
e^{(h,s_i)}$, we set
\[
\mathbf{P}(Y=s_i)=\frac{1}{NR(h)}e^{(h,s_i)}. %
\]
If there exists $h_0$ such that $\mathbf{E}Y=0$, then we have the following
formula for the number of walks with endpoints $x$ and $y$:
\[
N_n(x,y)=N^n\bigl(R(h_0)
\bigr)^ne^{(h_0,x-y)}\mathbf{P} \Biggl(x+\sum
_{k=1}^n Y(k)=y,\tau_x>n \Biggr).
\]
It is clear that $\mathbf{E}Y=0$ if and only if $R(h)$ attains its
minimum at $h=h_0$.
A necessary and sufficient condition for the existence of the global
minimum for $R$ is
that the step set is not contained in a closed half-space.

There exists a linear transformation with matrix $M$ such that $X=MY$
has uncorrelated coordinates and
\[
\mathbf{P}\Biggl(x+\sum_{k=1}^n Y(k)=y,
\tau_x>n\Biggr)=\mathbf{P}\bigl(Mx+S(n)=My,\tau_{x}>n
\bigr). %
\]
Since the number of possible steps is finite, we have a random walk
with bounded jumps. Therefore, we may use our results in any starlike
cone; see Remark~\ref{R1}. Applying Theorem~\ref{Loc.T2} to the random
walk $S(n)$ and cone $MK$, we obtain
%
%e12 #&#
\begin{equation}
\label{count} N_n(x,y)=C(x,y) \bigl(NR(h)\bigr)^n
n^{-p-d/2}\bigl(1+o(1)\bigr)\qquad \mbox{as }n\to\infty.
\end{equation}

It is worth mentioning that not only $C(x,y)$ but also $p$ may depend
on $h$. This means that $p$ depends not only on the cone $K$ but also
on the step set $\mathcal{S}$. An essential disadvantage of this
approach is the fact that we cannot give an explicit expression for
the function $C(x,y)$ and, therefore, we can only determine the rate of
growth of $N_n(x,y)$. Nevertheless, for large values of $x$ and $y$
inside the cone one can obtain an approximation for $C(x,y)$ from the
relation $V(x)\sim u(x)$.

We also note that upper bounds for $N_n(x,y)$ can be obtained from
the estimates due to Varopoulos. It follows from (0.7.4) in
\cite{Var99} that $C(x,y)$ from (\ref{count}) can be bounded from
above by $Cu(x+x_0)u(y+x_0)$ with some appropriate $x_0$. An essential
advantage of this bound consists in the fact that $u$ is more
accessible than~$V$.

Finally, we mention that our derivation of \eqref{count} is purely
probabilistic, since we use a strong approximation to prove
Theorem~\ref{T2}. And it is not at all clear how to prove
\eqref{count} by combinatorial methods. The only case known in the
literature are random walks with small steps in the quarter plane:
Fayolle and Raschel \cite{FR} derived a version of \eqref{count}
by means of the kernel method.
%%%%%%%%%%%%%%%%%%%%%%%%%%%%%%%%%%%%%%%%%%%%%%%%%%%%%%%%%%%%%%%%%%%%%%%%%%%%%%%%%%%%%%%%%%%%%%%%%%%%%%%%%%%%
%s1.6 #&#
\subsection{Random walks in Weyl chambers}
As it has already been mentioned, random walks in Weyl chambers have
attracted a lot of attention in the recent past.

Let us first consider the chamber of type $A$, that is,
\[
W_A:=\bigl\{x\in\mathbb{R}^d\dvtx x_1<x_2<
\cdots<x_d\bigr\}. %
\]
In this case, one has $ u(x)=\prod_{i<j}(x_j-x_i)$ and
$p=d(d-1)/2$. $W_A$ is convex and $u$ is harmonic on the whole space
$\mathbb{R}^d$. Therefore, we may apply all our theorems to all random
walks satisfying normalization and moment conditions. If one additionally
assumes that the coordinates of $X$ are exchangeable, or even
independent, then $f(x)=0$. This has been shown by K\"onig, O'Connell
and Roch \cite{KOR02}. Therefore,
%
%e13 #&#
\begin{equation}
\label{EK_form} V(x)=u(x)-\mathbf{E}u\bigl(x+S(\tau_x)\bigr).
\end{equation}
This form of the harmonic function has been suggested by Eichelsbacher
and K\"onig \cite{EK08}. It is worth mentioning that if the
coordinates of $X$ are independent, then the moment condition from the
present paper is not optimal. It is shown in~\cite{DW10} that all the
statements in Theorems \ref{T} and \ref{T2} remain valid under the
condition $\mathbf{E}|X|^{d-1}<\infty$. For two further Weyl chambers,
\[
W_C:=\bigl\{x\in\mathbb{R}^d\dvtx 0<x_1<x_2<
\cdots<x_d\bigr\} %
\]
and
\[
W_D:=\bigl\{x\in\mathbb{R}^d\dvtx |x_1|<x_2<
\cdots<x_d\bigr\} %
\]
and random walks with independent coordinates K\"onig and Schmid
\cite{KS10} have proven versions of Theorems \ref{T} and \ref{T2}
under moment conditions which are weaker than
$\mathbf{E}|X|^{p}<\infty$. However, they have imposed an additional
symmetry condition. More precisely, they have assumed that some odd
moments of the distribution of coordinates are zero. This has been
done in order to make $u(x+S(n))$ a martingale and $f(x)=0$.
As a result, they had
the harmonic function of the form (\ref{EK_form}). One can verify that
all the statements of \cite{KS10} remain valid without the symmetry
condition mentioned above, if one takes the harmonic function from our
Theorem~\ref{T}. (One has first to show that this function is
well defined under the moment condition imposed by the authors of
\cite{KS10}.)

We next note that if our random walk has independent coordinates, then
Theorems \ref{Loc.T1} and \ref{Loc.T2} are valid for Weyl chamber
under weaker moment assumptions. Indeed, as we have already mentioned,
one needs an integral limit theorem for the conditioned random
walk. Therefore, the moment conditions from \cite{DW10} and
\cite{KS10} are sufficient for the validity of the local limit
theorems. Applying (\ref{Loc.T1.3}) to the Weyl chamber of type $A$,
we then see that the distribution of the excursion at time $tn$
converges to the measure determined by the density
\[
\frac{1}{\rho(2\pi)^d}\frac{1}{ (t(1-t) )^{p+d/2}} \biggl(\prod_{i<j}(z_j-z_i)
\biggr)^2e^{-|z|^2/2t(1-t)}\,dz,\qquad z\in W_A, %
\]
which is known to be the density of the distribution of eigenvalues in GUE.
This result corresponds to Theorem~1 of Baik and Suidan~\cite{BS07}.
%%%%%%%%%%%%%%%%%%%%%%%%%%%%%%%%%%%%%%%%%%%%%%%%%%%%%%%%%%%%%%%%%%%%%%%%%%%%%%%%%%%%%%%%%%%%%%%%%%%%%%%%%%%%
%s1.7 #&#
\subsection{Description of our method} In the one-dimensional case, we
have only
two cones: positive and negative half-axis. To determine the behavior of
$\mathbf{P}(\tau_0>n)$ in this classical case one uses the Wiener--Hopf
factorization. For an arbitrary starting point $x$ inside one of the
half-axis, one
has $\mathbf{P}(\tau_x>n)\sim H(x)\mathbf{P}(\tau_0>n)$, where $H(x)$
is the renewal
function based on ladder heights. And one can easily infer that $H(x)$
is harmonic.
It is worth mentioning that the Wiener--Hopf method is quite powerful
as one does not
need to impose any moment conditions on the random walk.

Unfortunately, there are no general versions of the Wiener--Hopf
factorization for
the multidimensional case. We have found only two such attempts in the
literature.
First, Mogulskii and Pecherskii \cite{MP77} proved some factorization
identities for
random walks on semigroups. However, it is not clear how to get
asymptotics for exit times from
these identities. The second one is the paper by Greenwood and Shaked
\cite{GS77}, where a factorization
over a family of two-dimensional cones is performed. As a consequence,
the authors determined the
asymptotic behavior of some special first passage times.

The first step in the proof of Theorem~\ref{T} consists in the
construction of the harmonic
function $V$; see Section~\ref{sect.V.is.good}. Here, we use the
universality idea and
construct $V$ from the harmonic function $u$ for the Brownian motion:
\[
V(x)=\lim_{n\to\infty}\mathbf E \bigl[u\bigl(x+S(n)\bigr);
\tau_x>n \bigr],\qquad x\in K. %
\]
An additional difficulty arises from the fact that although
$u(x+B(t\wedge\tau^{\mathrm{bm}}_x ))$ is a martingale
the sequence $u(x+S(n\wedge\tau_x))$ is not. This explains correction
terms $f(x+S(k))$ in Lemma~\ref{lem-1}. Another difficulty is that in general we have $x+S(\tau
_x)\notin\partial K$
with positive probability. This is the reason for introducing the
extendability condition
on the cone $K$.

The second step of the proof is a coupling with the Brownian motion.
Although this idea
is quite natural, its naive application (starting from the beginning)
gives only rough asymptotics:
\[
\mathbf P(\tau_x>n)\sim n^{-p/2+o(1)},\qquad n\to\infty. %
\]
To obtain exact asymptotics, one has to wait until the random walk
moves far from
the boundary of the cone. In Lemma~\ref{lem1}, we show that this
happens with a high
probability. Then in Lemma~\ref{lem5} we couple out random walk with
the Brownian
motion using an extended version of Sakhanenko's coupling; see
Lemma~\ref{lem4}.
This allows us to obtain the exact asymptotics when starting at $y$ far
from the boundary,
\[
\mathbf P(\tau_y>n)\sim\varkappa u(y)n^{-p/2}. %
\]

Section~\ref{sect.asymptotics} is the final step of the proof of
Theorem~\ref{T}. We use the Markov property
at the first time $\nu_n$ when the random walk is far from the boundary
and the formula we obtained from the coupling in
Lemma~\ref{lem5}. Informally, this results in
\begin{eqnarray*}
\mathbf P(\tau_x>n) &\approx&\int\mathbf P(\tau_x>
\nu_n, S_{\nu
_n}\in dy)\mathbf P(\tau_y>n)
\\
&\approx&\varkappa\int\mathbf P(\tau_x>\nu_n,
S_{\nu_n}\in dy)u(y)n^{-p/2}
\\
&\approx&\varkappa V(x)n^{-p/2}.
\end{eqnarray*}
These relations are proved in Lemmas~\ref{lem11}, \ref{lem12}.
Proof of Theorem~\ref{T2} uses the same ideas.

It is worth mentioning that the method of constructing harmonic
functions for random walks
described above works also for Markov processes in discrete time. After
the first version of the
present paper was finished, we applied our approach to the following
two problems.

First, in \cite{DW12+} we found asymptotics for $\mathbf{P}(\tau
_{x,y}>n)$, where
$\tau_{x,y}=\min\{n\geq1\dvtx x+ny+\sum_{k=1}^n S(k)\leq0\}$ and
$S(k)$ is
a driftless random walk
with\break $\mathbf{E}|S(1)|^{2+\delta}<\infty$. The process $\sum_{k=1}^n
S(k)$ is not Markovian,
but one can obtain the Markov property increasing the dimension of the
process. More precisely,
$(\sum_{k=1}^n S(k),S(n))$ is a Markov chain, and consequently, $\tau
_{x,y}$ becomes the exit
time from the cone $\mathbb{R}_+\times\mathbb{R}$.

Second, in \cite{DKW}, our joint paper with Dima Korshunov, we
investigated the asymptotic
behavior of the stationary distribution for a positive recurrent Markov
chain on $\mathbb{R}_+$
with asymptotically zero drift. The crucial step was again a
construction of harmonic functions
for a chain killed at leaving an interval $[x_0,\infty)$, $x_0>0$.

Based on these two examples, we conjecture that our approach should
work for a wide class
of Markov chains, which converge, after an appropriate scaling, to
diffusion processes.
%%%%%%%%%%%%%%%%%%%%%%%%%%%%%%%%%%%%%%%%%%%%%%%%%%%%%%%%%%%%%%%%%%%%%%%%%%%%%%%%%%%%%%%%%%%%%%%%%%%%%%%%%%%%
%%%%%%%%%%%%%%%%%%%%%%%%%%%%%%%%%%%%%%%%%%%%%%%%%%%%%%%%%%%%%%%%%%%%%%%%%%%%%%%%%%%%%%%%%%%%%%%%%%%%%%%%%%%%
%s2 #&#
\section{Finiteness and positivity of $V$}\label{sect.V.is.good}
This section is devoted to the construction of the harmonic function
$V$. We consider first the case $d\geq2$. The one-dimensional case will
be considered in Section~\ref{one}.
%%%%%%%%%%%%%%%%%%%%%%%%%%%%%%%%%%%%%%%%%%%%%%%%%%%%%%%%%%%%%%%%%%%%%%%%%%%%%%%%%%%%%%%%%%%%%%%%%%%%%%%%%%%%%%%
%s2.1 #&#
\subsection{Finiteness}

We first derive some properties of the
functions $v(x)$ and~$f(x)$.

%le7 #&#
\begin{lemma}
\label{lem:bounds.u}
Let $u$ be harmonic on $K^{4\varepsilon}$ and $|u(x)|\le c|x|^p, x
\in K^{4\varepsilon}$. Then we have the following estimates for the
derivatives
%
%e14 #&#
\begin{eqnarray}
\label{eq:bound.u} \llvert u_{x_i}\rrvert &\le& C|x|^{p-1},\qquad x\in
K^{3\varepsilon},
\nonumber
\\
\llvert u_{x_ix_j}\rrvert &\le& C|x|^{p-2},\qquad x\in K^{2\varepsilon
},
\\
\llvert u_{x_ix_j x_k}\rrvert &\le & C|x|^{p-3},\qquad x\in
K^{\varepsilon}.\nonumber
\end{eqnarray}
\end{lemma}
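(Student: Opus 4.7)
The plan is to reduce the bound to the classical Cauchy estimates for harmonic functions, applied on balls whose radii scale linearly with $|x|$. The only non-routine ingredient is a small geometric fact about the nested cones $K^\varepsilon\subset K^{2\varepsilon}\subset K^{3\varepsilon}\subset K^{4\varepsilon}$.

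First I would establish the nesting claim: for $k\in\{1,2,3\}$ and every $x\in K^{k\varepsilon}$, the ball $B(x,\delta_k|x|)$ is contained in $K^{(k+1)\varepsilon}$ for some $\delta_k>0$ depending only on $k$ and $\varepsilon$. Indeed, by definition of $K^{k\varepsilon}$ there exists $x'\in K$ with $|x-x'|<k\varepsilon|x'|$, which gives $(1-k\varepsilon)|x'|\le |x|\le (1+k\varepsilon)|x'|$. For any $z$ with $|z-x|<\delta|x|$ one has
$$
|z-x'|\le |z-x|+|x-x'|<\delta|x|+k\varepsilon|x'|\le \bigl(\delta(1+k\varepsilon)+k\varepsilon\bigr)|x'|,
$$
so taking $\delta_k<\varepsilon/(1+k\varepsilon)$ makes the right-hand side strictly less than $(k+1)\varepsilon|x'|$, i.e.\ $z\in K^{(k+1)\varepsilon}$.

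Next I would invoke the standard interior derivative estimates for harmonic functions: if $u$ is harmonic in $B(x,r)$ with $\sup_{B(x,r)}|u|\le M$, then for every multi-index $\alpha$,
$$
|D^{\alpha} u(x)|\le C_{|\alpha|,d}\,\frac{M}{r^{|\alpha|}}.
$$
These follow by differentiating the mean-value/Poisson representation. Applying this with $r=\delta_k|x|$ and using the hypothesis $|u(z)|\le c|z|^p$ together with $|z|\le (1+\delta_k)|x|$ on $B(x,\delta_k|x|)\subset K^{(k+1)\varepsilon}\subset K^{4\varepsilon}$, one gets $M\le c(1+\delta_k)^p|x|^p$, hence
$$
|D^{\alpha}u(x)|\le C\,|x|^{p-|\alpha|}.
$$
Specialising $|\alpha|=1$ with $k=3$, $|\alpha|=2$ with $k=2$, and $|\alpha|=3$ with $k=1$ yields the three announced bounds.

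There is no serious obstacle; the only thing requiring care is the geometric computation above, and one must keep track of the fact that the constants $\delta_k$ (and consequently the constants $C$ in the final estimates) depend on the fixed parameter $\varepsilon$, but this is harmless since $\varepsilon$ has already been fixed in the construction preceding the lemma.
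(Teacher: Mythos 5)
Your proposal is correct and takes essentially the same route as the paper: the identical geometric computation showing that $B(x,\delta|x|)\subset K^{(k+1)\varepsilon}$ for $\delta<\varepsilon/(1+k\varepsilon)$, followed by interior derivative estimates for harmonic functions on these balls with radius proportional to $|x|$. The paper merely derives the first-order Cauchy estimate by hand (mean-value property plus Gauss--Green) and treats higher derivatives ``likewise,'' whereas you cite the standard estimate $|D^\alpha u(x)|\le C_{|\alpha|,d}M/r^{|\alpha|}$ directly; this is not a substantive difference.
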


Here, and throughout the text we denote as $C,c$ some generic
constants.

\begin{pf*}{Proof of Lemma \ref{lem:bounds.u}}
Since $u$ is harmonic on $K^{4\varepsilon}$ all its derivatives are
harmonic as well. Let $y\in K^{3\varepsilon}$. It immediately
follows from the definition of the cone $K^{4\varepsilon}$ that the
ball $B(y,\eta|y|)\subset K^{4\varepsilon}$ for $\eta=\varepsilon
/(1+3\varepsilon)$. Indeed, let $x$ be such that $\operatorname{ dist}(y,x)\le
3\varepsilon|x|$. Then,
since $|y|\le(1+3\varepsilon)|x|$, for $z\in B(y,\eta|y|)$,
\[
\operatorname{ dist}(z,x)\le\operatorname{ dist}(z,y)+\operatorname{ dist}(y,x) <\eta|y|+3\varepsilon|x|<4
\varepsilon|x|. %
\]
Hence, we can
apply the mean-value formula for harmonic functions to function
$u_{x_i}$ and obtain
\begin{eqnarray*}
|u_{x_i}|&=& \biggl\llvert \frac{1}{\operatorname{Vol}(B(y,\eta|y|))}\int_{B(y,\eta|y|)}
u_{x_i}\,dx \biggr\rrvert
\\
&=& \biggl\llvert \frac{1}{|\eta|y||^d\alpha(d)}\int_{\partial B(y,\eta|y|)} u
\nu_i \,ds \biggr\rrvert
\\
&\le&\frac{d\alpha(d)(\eta|y|)^{d-1}}{|\eta|y||^d\alpha(d)} \max_{x\in\partial B(y,\eta|y|)} u(x)
\\
&\le& c \frac{d}{\eta|y|}(1+\eta)^p |y|^p=c
\frac{d(1+\eta)^p}{\eta} |y|^{p-1}.
\end{eqnarray*}
Here, $\alpha(d)$ is the volume of the unit ball and we used the
Gauss--Green theorem. In the second line of the display, $\nu_i$ is
the outer
normal and integration takes place on the surface of the ball $B(y,\eta|y|)$.

The higher derivatives can be treated likewise. The claim of the lemma
immediately follows.
\end{pf*}

Next, we require a bound on $f(x)$.

%le8 #&#
\begin{lemma}
\label{lem:bound.f}
Let the assumptions of Lemma~\ref{lem:bounds.u} hold and $f$ be
defined by \eqref{eq:defn.f}. Let the moment and normalization assumptions
hold. Then, for some $\delta>0$,
\[
\bigl|f(x)\bigr| \le C |x|^{p-2-\delta}\qquad \mbox{for all } x\in K\mbox{ with }|x|\geq1.
\]
Furthermore,
\[
\bigl|f(x)\bigr|\leq C\qquad \mbox{for all } x\in K\mbox{ with }|x|\leq1. %
\]
\end{lemma}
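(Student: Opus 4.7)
The plan is to analyze $f(x)=\mathbf{E}v(x+X)-v(x)$ by splitting the expectation according to the size of the jump $X$, using a Taylor expansion that exploits the harmonicity of $u$ and the normalisation of $X$ for small jumps, and the moment assumption for large jumps. The small-$x$ case ($|x|\le 1$) is easy: since $|v(y)|\le C(1+|y|)^p$ and $\mathbf{E}|X|^p\le (\mathbf{E}|X|^\alpha)^{p/\alpha}<\infty$ by Jensen in all cases, $|f(x)|$ is bounded by a constant. So I concentrate on $|x|\ge 1$.

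Set the truncation threshold $M=|x|^{1-a}$ and write
$$
f(x)=\mathbf{E}[(v(x+X)-v(x))\mathbf{1}_{|X|\le M}]+\mathbf{E}[(v(x+X)-v(x))\mathbf{1}_{|X|>M}].
$$
The key geometric observation for the first piece is that whenever $|X|\le M$ and $|x|$ is sufficiently large, $x+X\in G$, so $v(x+X)=u(x+X)$ and we are working with the harmonic function $u$. Indeed, if $x+X\in K$ this is automatic; otherwise the segment joining $x\in K$ to $x+X\in K^c$ crosses $\partial K$, giving $\mathrm{dist}(x+X,\partial K)\le |X|\le|x|^{1-a}$, and since $|x+X|\asymp |x|$ this is $\le C|x+X|^{1-a}$, placing $x+X$ in the boundary shell defining $G$. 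Also $x+X\in K^{4\varepsilon}$ since $|X|/|x|\le|x|^{-a}\to 0$.

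Now Taylor expand $u$ around $x$ to third order:
$$
u(x+X)-u(x)=\sum_i u_{x_i}(x)X_i+\tfrac{1}{2}\sum_{i,j}u_{x_ix_j}(x)X_iX_j+R(x,X),
$$
with $|R(x,X)|\le C|X|^3|x|^{p-3}$ from Lemma \ref{lem:bounds.u}, since the segment from $x$ to $x+X$ lies in $K^\varepsilon$ with $|y|\asymp|x|$. Taking the expectation restricted to $\{|X|\le M\}$, the first-order term equals $-\nabla u(x)\cdot\mathbf{E}[X;|X|>M]$ (because $\mathbf{E}X=0$) and is bounded by $C|x|^{p-1}M^{1-\alpha}\mathbf{E}|X|^\alpha$ by truncated Markov. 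The quadratic term is the crucial cancellation: by normalisation $\mathbf{E}X_iX_j=\delta_{ij}$, the full expectation is $\tfrac{1}{2}\Delta u(x)=0$ since $u$ is harmonic on $K^{4\varepsilon}$; so only the truncation correction survives, bounded by $C|x|^{p-2}M^{2-\alpha}\mathbf{E}|X|^\alpha$. The remainder contributes at most $C|x|^{p-3}\mathbf{E}[|X|^3;|X|\le M]$, which splits into the cases $\alpha\ge 3$ (bounded by $C|x|^{p-3}$) and $2<\alpha<3$ (bounded by $C|x|^{p-3}M^{3-\alpha}$).

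For the large-jump piece, the polynomial growth $|v(y)|\le C(1+|y|)^p$ gives
$$
\mathbf{E}[|v(x+X)-v(x)|;|X|>M]\le C|x|^p\mathbf{P}(|X|>M)+C\mathbf{E}[|X|^p;|X|>M],
$$
each term controlled by Markov with the moment $\alpha$ (distinguishing $p>2$ where $\alpha=p$ and $p\le 2$ where $\alpha>2\ge p$, so $\mathbf{E}[|X|^p;|X|>M]\le M^{p-\alpha}\mathbf{E}|X|^\alpha$). Combining everything, each error term has exponent of the form $p-\alpha+O(a)$ or $p-\alpha(1-a)$, and since $\alpha>2$ one can choose $a>0$ small enough so that every exponent is dominated by $p-2-\delta$ for some $\delta>0$.

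The main obstacle is the geometric verification that $x+X\in G$ throughout the small-jump regime, which is precisely the reason for the careful definition of $G$ via the boundary layer of width $|x|^{1-a}$; it is this design that allows the substitution $v=u$ so the harmonicity of $u$ can kill the leading Taylor terms. The subsequent bookkeeping, matching the truncation exponent $a$ to the available moment $\alpha$, is elementary algebra once the Taylor cancellation is in place.
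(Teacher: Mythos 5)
Your proposal follows essentially the same route as the paper's proof: truncate the jump at scale $|x|^{1-a}$, use that $v=u$ on $G$ there, Taylor-expand $u$ to third order so that $\mathbf{E}X=0$, the identity covariance and $\Delta u=0$ annihilate the untruncated first- and second-order terms, and then control the truncation corrections, the cubic remainder and the large-jump contribution by Markov/Chebyshev with the $\alpha$-moment, choosing $a$ small. The one (easily repaired) imprecision is the truncation radius: since the boundary shell in the definition of $G$ has width exactly $|z|^{1-a}$ with no constant slack, your estimate ``${\rm dist}(x+X,\partial K)\le|x|^{1-a}\le C|x+X|^{1-a}$'' does not by itself place $x+X$ in $G$, and for $|x|$ of order $1$ one also needs $|X|<\varepsilon|x|$ to get $x+X\in K^{\varepsilon}$; both points are fixed by truncating at $\eta|x|^{1-a}$ with a small constant $\eta<\varepsilon$ satisfying $\eta+\eta^{1/(1-a)}\le 1$, exactly as the paper does.
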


\begin{pf}
Let $x\in K$ be such that $|x|\geq1$. Put $g(x)=|x|^{1-a}$, where we
pick constant $a$ later.
Fix some $\eta\in(0,\varepsilon)$ satisfying $\eta+\eta
^{1/(1-a)}\leq1$.
Then, for any $y\in B(0,\eta g(x))$, the sum $x+y\in G$.
By the Taylor theorem,
\[
\biggl\llvert u(x+y)-u(x)-\nabla u \cdot y -\frac{1}{2}\sum
_{i,j}u_{x_ix_j}y_iy_j
\biggr\rrvert \le R_3(x)|y|^3.
\]
The remainder can
be estimated by Lemma~\ref{lem:bounds.u}
\[
R_3(x)=\max_{z\in B(x,\eta g(x))}\max_{i,j,k}\bigl|u_{x_ix_jx_k}(z)\bigr|
\le C(1+\eta)^{p-3}|x|^{p-3}, %
\]
which will give us
%
%e15 #&#
\begin{equation}
\label{eq:taylor} \biggl\llvert u(x+y)-u(x)-\nabla u \cdot y -\frac{1}{2}
\sum_{i, j}u_{x_ix_j} y_iy_j
\biggr\rrvert \le C|x|^{p-3}|y|^3.
\end{equation}

Since $v=u$ on $G$, we can proceed as follows:
\begin{eqnarray*}
|f(x)| &=&\bigl|\e \bigl(u(x+X)-u(x) \bigr) \mathrm{1} \bigl(|X|
\le\eta g(x)\bigr) \bigr|
\\
&&{}+\bigl|\e \bigl(v(x+X)-v(x) \bigr) \mathrm{1} \bigl(|X|>\eta g(x)\bigr) \bigr|
\\
&\le&\biggl\llvert \e \biggl[ \biggl(\nabla u\cdot X +\frac{1}{2}\sum
_{i,j}u_{x_ix_j}X_iX_j
\biggr)\mathrm{1}\bigl(|X|\le\eta g(x)\bigr) \biggr]\biggr\rrvert
\\
&&{}+C|x|^{p-3}\e \bigl[|X|^3\mathrm{1}\bigl(|X|\le\eta g(x)\bigr)
\bigr]
\\
&&{} +C\e \bigl[|x|^p+\max\bigl(|X+x|^p,1\bigr)\mathrm{1}
\bigl(|X|>\eta g(x)\bigr) \bigr].
\end{eqnarray*}
Here we used also the bound $|v(z)|\leq C\max\{1,|z|^p\}$.

After rearranging the terms, we obtain
\begin{eqnarray*}
\bigl|f(x)\bigr| &\le& \biggl\llvert \e \biggl[\nabla u\cdot X +\frac{1}{2}\sum
_{i,j}u_{x_ix_j}X_iX_j
\biggr]\biggr\rrvert
\\
&&{}+\biggl\llvert \e \biggl[ \biggl(\nabla u\cdot X +\frac{1}{2}\sum
_{i,j}u_{x_ix_j}X_iX_j
\biggr)\mathrm{1}\bigl(|X|>\eta g(x)\bigr) \biggr]\biggr\rrvert
\\
&&{}+C|x|^{p-3}\e \bigl[|X|^3\mathrm{1}\bigl(|X|\le\eta g(x)\bigr)
\bigr] \\
&&{}+C\e \bigl[\bigl(|x|^p+\max\bigl(|X+x|^p,1\bigr)
\bigr)\mathrm{1}\bigl(|X|>\eta g(x)\bigr) \bigr].
\end{eqnarray*}
Now note that the first term is $0$ due to $\mathbf EX_i=0$, $\operatorname{cov}(X_i,
X_j)=0$ and $\Delta u=0$. The partial derivatives of the function $v$
in the second term are estimated via Lemma~\ref{lem:bounds.u}. As a
result,
\begin{eqnarray*}
\bigl|f(x)\bigr|&\le& C \bigl(|x|^{p-1}\mathbf E \bigl[|X|;|X|>\eta g(x) \bigr]
+|x|^{p-2}\mathbf E \bigl[|X|^2;|X|>\eta g(x) \bigr]
\\
&&\hspace*{13pt}{}+|x|^{p-3}\mathbf E \bigl[|X|^3;|X|\le\eta g(x) \bigr]
+|x|^{p}\mathbf P\bigl(|X|>\eta g(x)\bigr)
\\
&&\hspace*{123pt}{}+\mathbf E \bigl[\max\bigl(|X|^p,1\bigr);|X|> \eta g(x) \bigr]
\bigr).
\end{eqnarray*}
Hence, from the Markov inequality and
\[
\mathbf E \bigl[\max\bigl(|X|^p,1\bigr);|X|> \eta g(x) \bigr]\leq
\mathbf E \bigl[|X|^p;|X|> \eta g(x) \bigr]+ |x|^p
\mathbf{P}\bigl(|X|> \eta g(x)\bigr) %
\]
we conclude
%
%e16 #&#
\begin{eqnarray}
\label{new1}\qquad
\bigl|f(x)\bigr|&\le& C\frac{|x|^{p}}{\eta^2 g^2(x)}\mathbf E
\bigl[|X|^2;|X|>\eta g(x) \bigr] +C|x|^{p-3}\mathbf E
\bigl[|X|^3;|X|\le\eta g(x) \bigr]
\nonumber
\\[-8pt]
\\[-8pt]
\nonumber
&&{}+C\mathbf E \bigl[|X|^p;|X|> \eta g(x) \bigr].
\end{eqnarray}
Now recall the moment assumption that $\mathbf
E|X|^{2+2\delta}<\infty$ for some $\delta>0$. The first term is
estimated via the Chebyshev inequality,
\[
\frac{|x|^{p}}{\eta^2 g^2(x)}\mathbf E \bigl[|X|^2;|X|>\eta g(x) \bigr]\leq
\frac{|x|^{p}}{\eta^{2+2\delta} g^{2+2\delta}(x)}\mathbf E |X|^{2+2\delta}. %
\]
The second term can be estimated similarly,
\[
|x|^{p-3}\mathbf E \bigl[|X|^3;|X|\le\eta g(x) \bigr]\leq
|x|^{p-3}\eta^{1-2\delta}g^{1-2\delta}(x)\mathbf E
|X|^{2+2\delta}.
\]
Choosing $a$ sufficiently small, we see that the expectations in the
first line
of (\ref{new1}) are bounded by $C|x|^{p-2-\delta}$. In order to bound
the last term
in (\ref{new1}) we have to distinguish between $p\leq2$ and $p>2$.

If $p\leq2$, then, by the Chebyshev inequality,
\[
\mathbf{E} \bigl[|X|^p;|X|>\eta g(x) \bigr]\leq \frac{1}{(\eta g(x))^{2+2\delta-p}}
\mathbf{E} \bigl[|X|^{2+2\delta
} \bigr] \leq C|x|^{p-2-\delta} %
\]
for all $a$ sufficiently small.

In case $p>2$ we have, according to our moment condition, $\mathbf
{E}[|X|^p]<\infty$.
Consequently,
\[
\mathbf{E} \bigl[|X|^p;|X|>\eta g(x) \bigr]\leq C. %
\]
The second statement follows easily from the fact that $v(x)$ is
bounded on $|x|\leq1$ and
the inequality $\mathbf{E}[v(x+X)]\leq C(1+\mathbf{E}[|X|^p])$.
\end{pf}

%le9 #&#
\begin{lemma}
\label{v-lemma}
For any $x\notin K$,
\[
\bigl|v(x)\bigr|\leq C\bigl(1+|x|^{p-a}\bigr). %
\]
\end{lemma}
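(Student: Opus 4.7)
The plan is to split the region $x\notin K$ according to the two branches in the definition of $v$. If $x\notin G$, then by definition $v(x)=|x|^{p-a}$, so the bound is immediate. The substantive case is $x\in G\setminus K$, where $v(x)=u(x)$ is given by the harmonic extension to $K^{4\varepsilon}$, and where we must exploit the closeness of $x$ to $\partial K$.

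For $x\in G\setminus K$, the definition of $G$ gives $\mathrm{dist}(x,\partial K)\le|x|^{1-a}$, so there is some $x'\in\partial K$ with $|x-x'|\le|x|^{1-a}$. For $|x|$ large enough, the straight segment from $x$ to $x'$ lies inside $K^{3\varepsilon}$: every point $y$ on this segment satisfies $|y-x'|\le|x|^{1-a}=o(|y|)$, and since $x'\in\partial K\subset K^\varepsilon$, one checks that $y\in K^{3\varepsilon}$ for all $|x|$ sufficiently large. The extended eigenfunction $m_1$ still vanishes on $\partial\Sigma$ (the extension being a real-analytic solution of \eqref{eq.eigen} agreeing with $m_1$ on $\Sigma$), so $u(x')=|x'|^p m_1(x'/|x'|)=0$. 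Applying the gradient bound from Lemma~\ref{lem:bounds.u} along the segment yields
\begin{equation*}
|u(x)|=|u(x)-u(x')|\le\sup_{y\in[x',x]}|\nabla u(y)|\cdot|x-x'|\le C|x|^{p-1}\cdot|x|^{1-a}=C|x|^{p-a}.
\end{equation*}
This handles $x\in G\setminus K$ with $|x|$ large.

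For the bounded regime (say $|x|\le R_0$ for a suitable $R_0$ chosen so that the previous argument applies beyond it), $v$ is bounded: on $G$ it equals the continuous function $u$, which is locally bounded on $K^{4\varepsilon}$, and on the complement it equals $|x|^{p-a}$, also bounded on bounded sets. Hence $|v(x)|\le C$ on this region, and the extra $+1$ in the bound absorbs this contribution.

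The main obstacle is the geometric verification that the segment joining $x$ to the nearest boundary point $x'$ stays inside the enlarged cone $K^{3\varepsilon}$ where the gradient estimate of Lemma~\ref{lem:bounds.u} is valid; this is where the choice $\mathrm{dist}(x,\partial K)\le|x|^{1-a}=o(|x|)$ in the definition of $G$ is used decisively. Once this is established, the bound $|u(x)|\le C|x|^{p-a}$ follows from a one-line mean value argument using the vanishing of the extended $u$ on $\partial K$.
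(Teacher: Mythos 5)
Your proof is correct and follows essentially the same route as the paper: split according to $x\notin G$ versus $x\in G\setminus K$, handle small $|x|$ by boundedness, and for large $|x|$ use that $v=u$ vanishes on $\partial K$ together with the gradient bound of Lemma~\ref{lem:bounds.u} (a first-order Taylor/mean-value estimate along the segment to the nearest boundary point) to get $|v(x)|\leq C|x|^{p-1}\,{\rm dist}(x,\partial K)\leq C|x|^{p-a}$. Your explicit verification that the segment stays in $K^{3\varepsilon}$ just fills in a detail the paper leaves implicit.
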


\begin{pf}
If $x\notin G$, then the inequality follows from the definition of
$v$. Assume now that $x\in G\setminus K$. If $|x|\leq1$, then
$|v(x)|$ is clearly bounded. But if $|x|>1$, then $\operatorname{
dist}(x,\partial K)\leq|x|^{1-a}$. And it follows from the Taylor
formula (recall that $v|_{\partial K}=0$) and Lemma~\ref{lem:bounds.u} that
%
%e17 #&#
\begin{equation}
\label{bound.ineq} \bigl|v(x)\bigr|\leq C|x|^{p-1}\operatorname{ dist}(x,\partial K)\leq
C|x|^{p-a}.
\end{equation}
Thus, the proof is finished.
\end{pf}

%le10 #&#
\begin{lemma}
\label{moments}
For every $\beta<p$, we have
%
%e18 #&#
\begin{equation}
\label{tau-mom} \mathbf{E}\bigl[\tau_x^{\beta/2}\bigr]\leq C
\bigl(1+|x|^\beta\bigr)
\end{equation}
and
%
%e19 #&#
\begin{equation}
\label{max-mom} \mathbf{E}\bigl[M^\beta(\tau_x)\bigr]\leq
C\bigl(1+|x|^\beta\bigr),
\end{equation}
where $M(\tau_x):=\max_{k\leq\tau_x}|x+S(k)|$.
\end{lemma}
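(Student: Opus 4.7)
The plan is to reduce both moment estimates to a polynomial tail bound
\[
\mathbf{P}(\tau_x > n) \leq C(1+|x|^\gamma) n^{-\gamma/2}
\]
for some fixed $\gamma \in (\beta, p)$. For Brownian motion the analogous bound $\mathbf{P}(\tau_x^{bm} > t) \leq C u(x) t^{-p/2} \leq C|x|^p t^{-p/2}$ is Burkholder's classical estimate (see \cite{Burk77,BS97}), so the main task is to transfer it to the random walk. For this I would invoke Sakhanenko's strong approximation (the forthcoming Lemma~\ref{lem4}) to couple $\{S(k)\}_{k\leq n}$ with a standard Brownian motion $B$ so that, under the moment assumption $\mathbf{E}|X|^{2+2\delta}<\infty$, $\sup_{k\leq n}|S(k)-B(k)| \leq n^{1/(2+2\delta)}$ on an event of probability at least $1-o(n^{-\gamma/2})$. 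On this good event $\{\tau_x > n\}$ forces $B$ started at $x$ to remain inside a slight outward enlargement of $K$ for time $n$; since $u$ extends harmonically to the larger cone $\widetilde K$, Burkholder's estimate applied in the enlargement delivers the desired tail bound.

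Given this tail estimate, \eqref{tau-mom} follows from
\[
\mathbf{E}\tau_x^{\beta/2} = \frac{\beta}{2}\int_0^\infty t^{\beta/2-1}\mathbf{P}(\tau_x > t)\,dt,
\]
split at $t_0 = |x|^2$: the part $t\leq t_0$ contributes at most $t_0^{\beta/2} = |x|^\beta$, while on $[t_0,\infty)$ the tail bound yields an integral of order $|x|^\gamma\int_{|x|^2}^\infty t^{\beta/2-1-\gamma/2}\,dt = O(|x|^\gamma \cdot |x|^{\beta-\gamma}) = O(|x|^\beta)$, since $\gamma > \beta$. For \eqref{max-mom} I would combine the trivial bound $M(\tau_x) \leq |x| + \max_{k\leq\tau_x}|S(k)|$ with the Burkholder--Davis--Gundy inequality applied to the vector-valued martingale $S(\cdot)$ stopped at $\tau_x$: this gives $\mathbf{E}\max_{k\leq\tau_x}|S(k)|^\beta \leq C_\beta\mathbf{E}([S]_{\tau_x})^{\beta/2}$, and a standard Wald--Jensen manipulation using $\mathbf{E}|X|^{\max(\beta,2)}<\infty$ bounds this by $C\mathbf{E}\tau_x^{\beta/2}\leq C(1+|x|^\beta)$.

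The main obstacle will be the precise management of the Sakhanenko coupling near $\partial K$: the coupling error $n^{1/(2+2\delta)}$ is only polynomially small in $n$, so when $x$ is very close to $\partial K$ a naive enlargement-of-cone argument fails because the outward shift needed to absorb the coupling error is comparable to $\mathrm{dist}(x, \partial K)$ itself. A separate argument is then needed in this regime, for instance by observing that after a bounded number of random walk steps either the walk has exited $K$ or it has entered a region where $\mathrm{dist}(\cdot,\partial K) \gg n^{1/(2+2\delta)}$, which reduces to the previous case.
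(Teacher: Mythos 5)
The paper does not actually prove this lemma: it is quoted verbatim from Theorem 3.1 of \cite{MC84}, so your attempt has to stand on its own, and as it stands its core step fails. Your whole reduction rests on the tail bound $\mathbf{P}(\tau_x>n)\leq C(1+|x|^{\gamma})n^{-\gamma/2}$ for some $\gamma\in(\beta,p)$ with $\beta$ arbitrarily close to $p$, obtained from one application of the Sakhanenko coupling over the time interval $[0,n]$. But the coupling error is additive and only polynomially small: under $\mathbf{E}|X|^{2+\delta}<\infty$ the bound of Lemma~\ref{lem4} is $Cn^{2\gamma_0+\gamma_0\delta-\delta/2}$, i.e.\ never better than $n^{-\delta/2}$, and even using the full strength of $\mathbf{E}|X|^{p}<\infty$ (the case $p>2$) one cannot do better than $n\,\mathbf{P}(|X|>n^{1/2-\gamma_0})\asymp n^{1-p/2+p\gamma_0}$. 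For fixed $x$ the target $C(1+|x|^{\gamma})n^{-\gamma/2}$ is of order $n^{-\gamma/2}$, so the coupling error dominates it as soon as $\gamma>p-2$ (and, when $p\leq2$, as soon as $\gamma>2\delta'$ where $2+2\delta'$ is the only moment order available, which may be barely above $2$). This is exactly the range you need, since $\beta$ ranges up to $p$. No choice of the coupling exponent repairs this; it is precisely why the paper applies the coupling only to starting points $y\in K_{n,\varepsilon}$, where the probability being estimated is at least of order $n^{-p\varepsilon}$ (Lemma~\ref{lem5}), and why the moment bound on $\tau_x$ is imported from McConnell rather than derived from strong approximation. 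Your own fallback for points near $\partial K$ does not close this: after a bounded number of steps the walk has moved only $O(1)$, and reaching distance $n^{1/2-\varepsilon}$ from the boundary typically takes order $n^{1-2\varepsilon}$ steps (compare Lemma~\ref{lem1}), which again costs more than the coupling budget allows.

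The second half is closer to being right but is also not justified as written. The ``Wald--Jensen manipulation'' implicitly needs $\mathbf{E}\tau_x<\infty$, which fails whenever $p\leq2$ (then $\mathbf{P}(\tau_x>n)\asymp n^{-p/2}$ is not summable), so it cannot be used for $\beta<p\leq2$. A correct reduction of \eqref{max-mom} to \eqref{tau-mom} is available: for $\beta<2$ a Lenglart/good-$\lambda$ type domination (or a direct slicing of $\mathbf{P}(M(\tau_x)>\lambda)$ by $\mathbf{P}(\tau_x>\lambda^2 u)$ plus Kolmogorov's inequality) gives $\mathbf{E}\bigl[\max_{k\leq\tau_x}|S(k)|^{\beta}\bigr]\leq C_\beta\,\mathbf{E}\tau_x^{\beta/2}$, and for $2\leq\beta<p$ one can use a stopped-random-walk inequality of the form $\mathbf{E}\max_{k\leq\tau}|S(k)|^{\beta}\leq C_\beta\bigl(\mathbf{E}\tau^{\beta/2}+\mathbf{E}\tau\,\mathbf{E}|X|^{\beta}\bigr)$; both then combine with \eqref{tau-mom}. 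So \eqref{max-mom} is salvageable once \eqref{tau-mom} is known, but the proof of \eqref{tau-mom} itself, under the paper's minimal moment assumptions, cannot be carried out by the single-coupling argument you propose; this is the genuine gap, and the paper's own ``proof'' is the citation of Theorem 3.1 of \cite{MC84}.
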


This is the statement of Theorem~3.1 of \cite{MC84}. One has only to
notice that
$e(\Gamma,R)$ in that theorem is denoted by $p$ in our paper.

Next, we need to define an auxiliary process. Let
%
%e20 #&#
\begin{eqnarray}\label{eq:defn.y}
Y_0&=&v(x);
\nonumber
\\[-8pt]
\\[-8pt]
\nonumber
Y_{n+1}&=&v\bigl(x+S(n+1)\bigr)-\sum
_{k=0}^n f\bigl(x+S(k)\bigr),\qquad x\in K, n\ge0.
\end{eqnarray}

%le11 #&#
\begin{lemma}\label{lem-1}
The sequence $Y_n$ defined in \eqref{eq:defn.y} is a martingale.
\end{lemma}

\begin{pf}
The integrability of the sequence $Y_n$ is immediate from the bound
$u(x)\leq C|x|^p$ and
from Lemmas \ref{lem:bound.f} and \ref{v-lemma}.
Further,
\begin{eqnarray*}
\mathbf{E} [Y_{n+1}-Y_n|\mathcal{F}_n ] &=&
\mathbf{E} \bigl[ \bigl(v\bigl(x+S(n+1)\bigr)-v\bigl(x+S(n)\bigr)-f\bigl(x+S(n)
\bigr) \bigr)|\mathcal {F}_n \bigr]
\\
&=&-f\bigl(x+S(n)\bigr)+\mathbf{E} \bigl[ \bigl(v\bigl(x+S(n+1)\bigr)-v
\bigl(x+S(n)\bigr) \bigr)|S(n) \bigr]
\\
&=&-f\bigl(x+S(n)\bigr)+f\bigl(x+S(n)\bigr)=0,
\end{eqnarray*}
where we used the definition of the function $f$ in
\eqref{eq:defn.f}.
\end{pf}

%le12 #&#
\begin{lemma}\label{lem0}
For sufficiently small $a>0$, the function $V$ from (\ref{eq:defn.v})
is well defined. Furthermore,
%
%e21 #&#
\begin{equation}
\label{eq00} V(x)=\lim_{n\to\infty}\mathbf E \bigl[u\bigl(x+S(n)
\bigr);\tau_x>n \bigr],\qquad x\in K.
\end{equation}
This equality implies that $V$ does not depend on the choice of $a$ and
$\varepsilon$
in the definition of $G$.
\end{lemma}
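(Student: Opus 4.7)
The plan is to establish the well-definedness of $V(x)$ and the representation \eqref{eq00} simultaneously, by applying optional stopping to the martingale $Y_n$ from Lemma~\ref{lem-1} at the bounded time $n\wedge\tau_x$ and then letting $n\to\infty$. Since $n\wedge\tau_x$ is bounded, the identity $v(x)=\mathbf{E}Y_{n\wedge\tau_x}$ holds for each finite $n$; splitting the expectation according to $\{\tau_x>n\}$ versus $\{\tau_x\le n\}$ and using $v=u$ on $K$ rearranges this as
\begin{equation*}
\mathbf{E}[u(x+S(n));\tau_x>n]=v(x)-\mathbf{E}[v(x+S(\tau_x));\tau_x\le n]+\mathbf{E}\!\sum_{k=0}^{\tau_x\wedge n-1}\!f(x+S(k)).
\end{equation*}
Once I pass to the limit on the right-hand side, the left-hand side converges automatically, yielding both the finite value of $V(x)$ and its independence from $a$ and $\varepsilon$ (the limiting right-hand side involves only $u$, the walk, and $\tau_x$).

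The boundary term is the easier input. Lemma~\ref{v-lemma} together with $|u(y)|\le C|y|^p$ on $K$ gives $|v(x+S(\tau_x))|\le C\bigl(1+M(\tau_x)^{p-a}\bigr)$, and Lemma~\ref{moments} applied with $\beta=p-a<p$ yields $\mathbf{E}M(\tau_x)^{p-a}<\infty$. Thus $v(x+S(\tau_x))$ is integrable and dominated convergence gives $\mathbf{E}[v(x+S(\tau_x));\tau_x\le n]\to\mathbf{E}v(x+S(\tau_x))$.

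The $f$-sum is the crux. I would prove $\mathbf{E}\sum_{k=0}^{\tau_x-1}|f(x+S(k))|<\infty$ by combining the bound $|f(y)|\le C(1+|y|^{p-2-\delta})$ from Lemma~\ref{lem:bound.f} with tail estimates on $\tau_x$ obtained from Chebyshev applied to Lemma~\ref{moments}: for each $\beta<p$ one has $\mathbf{P}(\tau_x>k)\le C(1+|x|^\beta)k^{-\beta/2}$, so the series $\sum_k\mathbf{E}[|f(x+S(k))|;\tau_x>k]$ can be bounded term-by-term via H\"older (against bounds for $|x+S(k)|\le M(\tau_x)$) by a summable series, provided $a$ is chosen small enough that all exponents stay below $p$. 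Once the total sum is integrable, dominated convergence handles both $\mathbf{E}[\sum_{k=0}^{\tau_x-1}f;\tau_x\le n]\to\mathbf{E}\sum_{k=0}^{\tau_x-1}f$ and $\mathbf{E}[\sum_{k=0}^{n-1}f;\tau_x>n]\to 0$. Assembling the pieces yields \eqref{eq00} (the index shift in the paper's definition of $V$ corresponds to isolating the deterministic term $f(x)=f(x+S(0))$), and the independence from $a$ and $\varepsilon$ is immediate from the form of the right-hand side.

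The main obstacle is proving the integrability of the $f$-sum. Because Lemma~\ref{moments} delivers moments of $\tau_x$ and $M(\tau_x)$ only strictly below the critical order $p$, whereas the growth exponent of $|f|$ is exactly $p-2-\delta$, one must carefully balance the tail decay of $\tau_x$ against the size of $|f(x+S(k))|$ on $\{\tau_x>k\}$ via H\"older, fixing $a$ small at the last moment so that every exponent sits in the admissible range; this is precisely where the smallness requirement on $a$ enters.
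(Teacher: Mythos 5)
Your overall architecture is the paper's: you stop the martingale $Y_n$ of Lemma~\ref{lem-1} at $n\wedge\tau_x$, split off the boundary term and the $f$-sum, control $v(x+S(\tau_x))$ via Lemma~\ref{v-lemma} and Lemma~\ref{moments} with $\beta=p-a$, and pass to the limit by dominated convergence; the representation \eqref{eq00} and the independence of $a,\varepsilon$ then come for free. All of that is fine, and your treatment of the $f$-sum is essentially the paper's argument in the case $p>2$: there $|f(y)|\le C|y|^{p-2-\delta}$ with a \emph{positive} exponent, so $\sum_{l<\tau_x}|f(x+S(l))|\le \tau_x\,M(\tau_x)^{p-2-\delta}$ and H\"older with exponents $p'<p/2$, $q'<p/(p-2-\delta)$ together with Lemma~\ref{moments} closes the estimate.

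The genuine gap is the case $p\le 2$, where your scheme breaks down and a new idea is needed. For $p\le 2$ the exponent $p-2-\delta$ is negative, so the bound $|x+S(k)|\le M(\tau_x)$ goes the wrong way ($|x+S(k)|^{p-2-\delta}\ge M(\tau_x)^{p-2-\delta}$), and the only monotone bound available, $|f|\le C$ near the apex, leads term-by-term to $\sum_k\mathbf{P}(\tau_x>k)$; but Lemma~\ref{moments} only gives moments of $\tau_x$ of order $\beta/2<p/2\le 1$, so $\mathbf{P}(\tau_x>k)\lesssim k^{-\beta/2}$ with $\beta/2<1$ and the series diverges --- indeed $\mathbf{E}\tau_x$ may be infinite. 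No choice of H\"older exponents against $M(\tau_x)$ or against tails of $\tau_x$ repairs this, because what is really needed is an \emph{anti-concentration} statement: one must show the walk rarely sits near any fixed point (in particular near the apex), so that the events $\{|x+S(l)|\le 1\}$ and the shells $\{j\le|x+S(l)|\le j+1\}$, $j\le\sqrt l$, carry probability of order $l^{-d/2}$ and $j^{d-1}l^{-d/2}$ respectively. The paper does exactly this: it invokes Esseen's concentration inequality \eqref{esseen}, splits the sum according to $|x+S(l)|\le 1$, $1<|x+S(l)|\le\sqrt l$, $|x+S(l)|>\sqrt l$, and combines $\sup_y\mathbf{P}(|y+S(l/2)|\le1)\le Cl^{-d/2}$ with $\mathbf{P}(\tau_x>l/2)$ via the Markov property to make each piece summable (only the last piece, $|x+S(l)|>\sqrt l$, is handled by the kind of estimate you propose). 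Without this concentration-function input, or some substitute for it, your proof of $\mathbf{E}\sum_{l<\tau_x}|f(x+S(l))|<\infty$ does not go through for $p\le2$, which includes basic examples such as half-spaces and all cones with small $\lambda_1$.
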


\begin{pf}
First, using \eqref{eq:defn.y} we obtain,
\begin{eqnarray*}
\mathbf E \bigl[v\bigl(x+S(n)\bigr);\tau_x>n \bigr] & =& \mathbf
E[Y_n;\tau_x>n]+\sum_{l=0}^{n-1}
\mathbf E\bigl[f\bigl(x+S(l)\bigr);\tau _x>n\bigr]
\\
& =&\mathbf EY_n -\mathbf E[Y_n;\tau_x\le
n]+\sum_{l=0}^{n-1}\mathbf E\bigl[f
\bigl(x+S(l)\bigr);\tau_x>n\bigr].
\end{eqnarray*}
Since $Y_k$ is a martingale, $\mathbf EY_n=\mathbf EY_0=v(x)$ and
$\mathbf E[Y_n;\tau_x\le n]=\mathbf E[Y_{\tau_x};\tau_x\le n]$.
Using the definition of $Y_n$ once again, we arrive at
\begin{eqnarray*}
\mathbf E \bigl[v\bigl(x+S(n)\bigr);\tau_x>n\bigr] &=& v(x)-\mathbf
E \bigl[v\bigl(x+S(\tau_x)\bigr),\tau_x\leq n\bigr]
\\
&&{} +\mathbf E \Biggl[\sum_{l=0}^{\tau_x-1} f
\bigl(x+S(l)\bigr);\tau_x\le n \Biggr] \\
&&{}+\sum
_{l=0}^{n-1}\mathbf E\bigl[f\bigl(x+S(l)\bigr);
\tau_x>n\bigr].
\end{eqnarray*}
Combining Lemmas \ref{v-lemma} and \ref{moments}, we obtain
%
%e22 #&#
\begin{equation}
\label{nom1} \mathbf{E}\bigl|v\bigl(x+S(\tau_x)\bigr)\bigr|\leq
\mathbf{E}M^{p-a}({\tau_x})\leq C\bigl(1+|x|^{p-a}
\bigr).
\end{equation}
Then the dominated convergence theorem implies that
%
%e23 #&#
\begin{equation}
\label{dom.conv} \mathbf E \bigl[v\bigl(x+S(\tau_x)\bigr),
\tau_x\leq n\bigr]\to\mathbf{E}v\bigl(x+S(\tau_x)
\bigr).
\end{equation}
To estimate the third and fourth terms, it is sufficient to prove that
%
%e24 #&#
\begin{equation}
\label{eq:sum.finite} \mathbf E \Biggl[\sum_{l=0}^{\tau_x-1}
\bigl|f\bigl(x+S(l)\bigr)\bigr| \Biggr]\le C\bigl(1+|x|^{p-\delta}\bigr).
\end{equation}
Indeed, the dominated convergence theorem then implies that
\[
\mathbf E \Biggl[\sum_{l=0}^{\tau_x-1} f
\bigl(x+S(l)\bigr);\tau_x\le n \Biggr] \to \mathbf E \Biggl[\sum
_{l=0}^{\tau_x-1} f\bigl(x+S(l)\bigr) \Biggr]
\]
and
\[
\Biggl\llvert \sum_{l=0}^{n-1}\mathbf E
\bigl[f\bigl(x+S(l)\bigr);\tau_x>n\bigr]\Biggr\rrvert \le \mathbf E
\Biggl[\sum_{l=0}^{\tau_x-1} \bigl|f\bigl(x+S(l)
\bigr)\bigr|;\tau_x>n \Biggr]\to0 %
\]
since $\tau_x$ is finite a.s.

Hence, it remains to prove (\ref{eq:sum.finite}). Consider first the case
$p>2$. Assuming that $\delta<p-2$ and using Lemma~\ref{lem:bound.f},
we get
\[
\mathbf E \Biggl[\sum_{l=0}^{\tau_x-1} \bigl|f
\bigl(x+S(l)\bigr)\bigr| \Biggr]\leq C\mathbf{E}\bigl[\tau_x
M^{p-2-\delta}(\tau_x)\bigr]. %
\]
Applying H\"older's inequality with $p'<p/2$ and $q'<p/(p-2-\delta)$ and
Lemma~\ref{moments}, we obtain
\[
\mathbf E \Biggl[\sum_{l=0}^{\tau_x-1} \bigl|f
\bigl(x+S(l)\bigr)\bigr| \Biggr]\leq \bigl(\mathbf{E}\tau_x^{p'}
\bigr)^{1/p'} \bigl(\mathbf {E}M^{q'(p-2-\delta)}(\tau_x)
\bigr)^{1/q'}<C\bigl(1+|x|^{p-\delta}\bigr). %
\]
Such a choice of $p'$ and $q'$ is possible since
$(p/2)^{-1}+(p/(p-2-\delta))^{-1}<1$.
This proves (\ref{eq:sum.finite}) for $p>2$.

Next, consider the case $p\le2$. We split the sum in
(\ref{eq:sum.finite}) into four parts,
\begin{eqnarray*}
\mathbf E \Biggl[\sum_{l=0}^{\tau_x-1} \bigl|f
\bigl(x+S(l)\bigr)\bigr| \Biggr]& =& f(x)+\sum_{l=1}^{\infty}
\mathbf E \bigl[\bigl |f\bigl(x+S(l)\bigr)\bigr|;\tau _x>l \bigr]
\\
&=& f(x)+\sum_{l=1}^{\infty} \mathbf E \bigl[
\bigl|f\bigl(x+S(l)\bigr)\bigr|;\bigl|x+S(l)\bigr|\le 1,\tau_x>l \bigr]
\\
&&{}+ \sum_{l=1}^{\infty} \mathbf E \bigl[\bigl |f
\bigl(x+S(l)\bigr)\bigr|;1<\bigl|x+S(l)\bigr|\le\sqrt l, \tau_x>l \bigr]
\\
&&{}+ \sum_{l=1}^{\infty} \mathbf E \bigl[ \bigl|f
\bigl(x+S(l)\bigr)\bigr|;\bigl|x+S(l)\bigr|> \sqrt l, \tau_x>l \bigr]
\\
&=:&f(x)+\Sigma_1+\Sigma_2+\Sigma_3.
\end{eqnarray*}

According to Theorem~6.2 of \cite{Es68},
%
%e25 #&#
\begin{equation}
\label{esseen} \sup_{z\in\mathbb{R}^d}\mathbf{P}\bigl(\bigl|S(n)-z\bigr|\leq1\bigr)
\leq Cn^{-d/2}.
\end{equation}

By Lemma~\ref{lem:bound.f}, $|f(y)|\le C$ for $|y|\le1$. From this
bound and (\ref{esseen}), we obtain
\begin{eqnarray*}
\Sigma_1&\le &C\sum_{l=1}^\infty
\mathbf P\bigl(\bigl|x+S(l)\bigr|\le1,\tau_x>l\bigr)
\\
&\le& C\sum_{l=1}^\infty\mathbf P(
\tau_x>l/2)\sup_{y}\mathbf P\bigl(\bigl|y+S(l/2)\bigr|
\le1\bigr)
\\
&\le& C \sum_{l=1}^\infty l^{-d/2}
\mathbf P(\tau_x>l/2)
\\
&\le& C\mathbf{E}\bigl[\tau_x^{(p-\delta)/2}\bigr] \sum
_{l=1}^\infty l^{-d/2-(p-\delta)/2}\le C
\bigl(1+|x|^{p-\delta}\bigr),
\end{eqnarray*}
where the sum is convergent due to $d\ge2$.

Second, by Lemma~\ref{lem:bound.f},
\begin{eqnarray*}
\Sigma_2&\le &C\sum_{l=1}^\infty
\mathbf E \bigl[\bigl|x+S(l)\bigr|^{p-2-\delta
};1\le\bigl|x+S(l)\bigr|\le\sqrt l,
\tau_x>l \bigr]
\\
&\le& C \sum_{l=1}^\infty\sum
_{j=1}^{\sqrt l} \mathbf E \bigl[\bigl|x+S(l)\bigr|^{p-2-\delta};j
\le\bigl|x+S(l)\bigr|\le j+1,\tau_x>l \bigr]
\\
&\le& C \sum_{l=1}^\infty\sum
_{j=1}^{\sqrt l} j^{p-2-\delta}\mathbf P\bigl(j
\le\bigl|x+S(l)\bigl|\le j+1,\tau_x>l\bigr).
\end{eqnarray*}
Now we note that
\[
\mathbf P\bigl(j\le\bigl|x+S(l)\bigl|\le j+1,\tau_x>l\bigr) \le\mathbf P(
\tau_x>l/2)\sup_{y}\mathbf P\bigl(\bigl|y+S(l/2)\bigr|
\in[j,j+1]\bigr).
\]
Covering the region $\{z\dvtx |z|\in[j,j+1]\}$ by $Cj^{d-1}$ unit balls and
using (\ref{esseen}), we get
\[
\mathbf P\bigl(j\le\bigl|x+S(l)\bigr|\le j+1,\tau_x>l\bigr)\le C
j^{d-1}l^{-d/2}\mathbf P(\tau_x>l/2). %
\]
Then
\begin{eqnarray*}
\Sigma_2&\le& C \sum_{l=1}^\infty
\sum_{j=1}^{\sqrt l} j^{p-2-\delta
}j^{d-1}l^{-d/2}
\mathbf P(\tau_x>l/2)
\\
&\le& C \sum_{l=1}^\infty l^{p/2-1-\delta/2}
\mathbf P(\tau_x>l/2)
\\
&\le& C\mathbf{E}\bigl[\tau_x^{(p-\delta)/2}\bigr]\le C
\bigl(1+|x|^{p-\delta}\bigr),
\end{eqnarray*}
by Lemma~\ref{moments}.

Third, by Lemma~\ref{lem:bound.f} and the fact that $p\le2$,
\begin{eqnarray*}
\Sigma_3& \le& C\sum_{l=1}^\infty
\mathbf E \bigl[\bigl|x+S(l)\bigl|^{p-2-\delta
};\bigl|x+S(l)\bigr|>\sqrt l,\tau_x>l
\bigr]
\\
&\le& C\sum_{l=1}^\infty l^{(p-2-\delta)/2}
\mathbf P(\tau_x>l)
\\
&\le& C\mathbf{E}\bigl[\tau_x^{(p-\delta)/2}\bigr]\le C
\bigl(1+|x|^{p-\delta}\bigr).
\end{eqnarray*}
\upqed\end{pf}

%%%%%%%%%%%%%%%%%%%%%%%%%%%%%%%%%%%%%%%%%%%%%%%%%%%%%%%%%%%%%%%%%%%%%%%%%%%%%%%%%%%%%%%%%%%%%%%%%%%%%%%%%%%%%%%%%%%%
%s2.2 #&#
\subsection{Positivity} In this paragraph, we show that $V$ is strictly
positive on $K_+$ and prove some further properties of this function.

%le13 #&#
\begin{lemma}\label{positiv}
The function $V$ possesses the following properties:
\begin{longlist}[(a)]
\item[(a)] For any $\gamma>0, R>0$, uniformly in $x\in D_{R,\gamma}$ we
have $V(tx)\sim u(tx)$ as $t\to\infty$.
\item[(b)] For all $x\in K$, we have $V(x)\leq C(1+|x|^p)$.
\item[(c)] The function $V$ is harmonic for the killed random walk,
that is,
\[
V(x)=\mathbf{E} \bigl[V\bigl(x+S(n_0)\bigr),\tau_x>n_0
\bigr],\qquad x\in K, n_0\geq1. %
\]
\item[(d)] The function $V$ is strictly positive on $K_+$.
\item[(e)] If $x\in K$, then $V(x)\leq V(x+x_0)$, for all $x_0$ such
that $x_0+K\subset K$.
\end{longlist}
\end{lemma}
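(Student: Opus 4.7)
My plan is to handle the five claims in sequence, with the key tool being the limit representation from Lemma~\ref{lem0}: $V(y) = \lim_{n\to\infty}\mathbf{E}[u(y+S(n));\tau_y>n]$ for $y\in K$. For parts (a) and (b) I would work directly from the defining decomposition $V(x) = v(x) - \mathbf{E}v(x+S(\tau_x)) + \mathbf{E}\sum_{k=1}^{\tau_x-1} f(x+S(k))$. For (b): $|v(x)|\le C|x|^p$ on $K$ (since $v=u$ there), $\mathbf{E}|v(x+S(\tau_x))|\le C(1+|x|^{p-a})$ from \eqref{nom1}, and the correction sum is $\le C(1+|x|^{p-\delta})$ from \eqref{eq:sum.finite}, so $V(x)\le C(1+|x|^p)$. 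For (a), apply the same decomposition at $tx$ with $x\in D_{R,\gamma}$ and divide by $u(tx) = t^p u(x)$: the two correction terms become $O(t^{-a})$ and $O(t^{-\delta})$ relative to $u(tx)$, uniformly in $x\in D_{R,\gamma}$ since $m_1$ is bounded away from zero on the compact subset $\{z/|z|:z\in D_{R,\gamma}\}$ of $\Sigma$ (which stays at positive distance from $\partial\Sigma$). Thus $V(tx)/u(tx)\to 1$ uniformly.

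For (c), insert the Markov property at time $n_0$ into the limit formula:
\[
\mathbf{E}[u(x+S(n+n_0));\tau_x>n+n_0] = \mathbf{E}\bigl[\mathbf{1}_{\tau_x>n_0}\,\psi_n(x+S(n_0))\bigr],
\]
where $\psi_n(y):=\mathbf{E}[u(y+S(n));\tau_y>n]\to V(y)$. Sending $n\to\infty$ and exchanging limit and expectation would give $V(x) = \mathbf{E}[V(x+S(n_0));\tau_x>n_0]$. The exchange is justified by dominated convergence with a uniform-in-$n$ majorant $\psi_n(y)\le C(1+|y|^p)$, which one extracts by re-reading the proof of Lemma~\ref{lem0}: the successive applications of Lemmas~\ref{v-lemma}, \ref{lem:bound.f} and \ref{moments} yield constants independent of $n$. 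Integrability of this majorant against the measure $\mathbf{P}(x+S(n_0)\in dy,\tau_x>n_0)$ follows from $\mathbf{E}|x+S(n_0)|^p<\infty$, which holds under the moment assumption.

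For (d), note that $V\ge 0$ on $K$ because the integrand in the limit formula is nonnegative on $\{\tau_y>n\}$. By (a), for the $\gamma$ appearing in the definition of $K_+$ choose $R_0$ so large that $V(y)\ge u(y)/2>0$ on $D_{R_0,\gamma}$. For $x\in K_+$, the definition provides an $n$ with $\mathbf{P}(x+S(n)\in D_{R_0,\gamma},\tau_x>n)>0$, and then (c) yields $V(x) \ge \mathbf{E}[V(x+S(n)); x+S(n)\in D_{R_0,\gamma},\tau_x>n] > 0$. For (e), couple the two walks using the same increments. Since $x_0+K\subset K$, the inclusion $\{\tau_x>n\}\subset\{\tau_{x+x_0}>n\}$ holds, so
\[
\mathbf{E}[u(x+x_0+S(n));\tau_{x+x_0}>n] \ge \mathbf{E}[u(x+x_0+S(n));\tau_x>n].
\]
It remains to show $u(y+x_0)\ge u(y)$ for $y\in K$, which I would prove via a Phragm\'en--Lindel\"of argument on $w(y):=u(y+x_0)-u(y)$: $w$ is harmonic on $K$, nonnegative on $\partial K$ (since $u|_{\partial K}=0$ and $y+x_0$ is at positive distance from $\partial K$), and $w(y)=O(|y|^{p-1})$ at infinity by Taylor expansion combined with Lemma~\ref{lem:bounds.u}. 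Applying the minimum principle to $w+\epsilon u$ on truncations $K\cap\{|y|\le R\}$, sending $R\to\infty$ and then $\epsilon\to 0$, gives $w\ge 0$. Substituting back and letting $n\to\infty$ in the limit formula yields $V(x+x_0)\ge V(x)$.

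The main obstacle I expect is the uniform-in-$n$ bound needed in (c): one must revisit every estimate in the proof of Lemma~\ref{lem0} and verify that no constant was tacitly allowed to grow with $n$. The Phragm\'en--Lindel\"of step in (e) is also nontrivial due to the unboundedness of $K$, but becomes routine once the comparison with $\epsilon u$ is set up.
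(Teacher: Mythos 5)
Parts (a)--(d) of your proposal are correct and essentially identical to the paper's proof: (a) and (b) from the defining decomposition together with (\ref{nom1}) and (\ref{eq:sum.finite}) and the lower bound $u(tx)\geq c_{\gamma}(t|x|)^p$ on $D_{R,\gamma}$; (c) by inserting the Markov property into the limit formula of Lemma~\ref{lem0} and dominating with $\mathbf{E}[u(y+S(n));\tau_y>n]\leq C(1+|y|^p)$, a bound the paper already records (uniformly in $n$, since (\ref{nom1}) and (\ref{eq:sum.finite}) are $n$-free); (d) exactly as you describe, with the (correct) observation that $V\geq0$ on $K$, which the paper uses implicitly. Your worry about hidden $n$-dependence of constants in (c) is unfounded but harmless.

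In (e) there is a genuine gap. The reduction to the pointwise inequality $u(y+x_0)\geq u(y)$, $y\in K$, plus the event inclusion $\{\tau_x>n\}\subset\{\tau_{x+x_0}>n\}$ is fine, but your Phragm\'en--Lindel\"of proof of that inequality does not close. With $w(y)=u(y+x_0)-u(y)$, the minimum principle applied to $w+\epsilon u$ on $K\cap\{|y|\leq R\}$ requires $w+\epsilon u\geq0$ on the spherical part $K\cap\{|y|=R\}$; there your only information is $-w\leq C R^{p-1}$, while $u$ vanishes on $\partial K$ and in fact $u(y)\leq C|y|^{p-1}\,{\rm dist}(y,\partial K)$, so near the edge $\partial K\cap\{|y|=R\}$ the term $\epsilon u$ cannot dominate $CR^{p-1}$ for any fixed $\epsilon$, no matter how large $R$ is. Hence the boundary minimum of $w+\epsilon u$ on the truncation cannot be shown nonnegative, and the passage $R\to\infty$, $\epsilon\to0$ breaks down. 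A correct barrier for this kind of argument must be strictly positive on $\overline{\Sigma}$ and grow faster than $|y|^{p-1}$, e.g.\ the harmonic function $|y|^{p'}m_1'(y/|y|)$ of a slightly enlarged cone with exponent $p'\in(p-1,p)$ (available here only via the extendability assumption and eigenvalue continuity). The paper avoids all of this: it deduces $u(y)\leq u(y+x_0)$ probabilistically from the pathwise inclusion $\{\tau^{bm}_y>t\}\subset\{\tau^{bm}_{y+x_0}>t\}$ together with the tail asymptotics (\ref{BS}), $\kappa u(y)=\lim_{t\to\infty}t^{p/2}\mathbf{P}(\tau^{bm}_y>t)$, and then concludes exactly as you do. Replace your barrier argument by this observation (or by a barrier from a strictly larger cone) and part (e) is complete.
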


\begin{pf}
To prove the part (a), it suffices to note that $t^{-p}u(tx)=u(x)$,
$\inf_{x\in D_{R,\gamma}}u(x)>0$, and use bounds (\ref{nom1}), (\ref
{eq:sum.finite}). These
inequalities together with $|u(x)|\leq C|x|^p$ give the part (b).

It suffices to prove (c) for $n_0=1$, since for bigger values of $n_0$
one can then use the Markov
property of $S(n)$. It is clear that
\begin{eqnarray*}
&&\mathbf{E}\bigl[u\bigl(x+S(n+1)\bigr),\tau_x>n+1\bigr]\\
&&\qquad=\int
_K\mathbf{P}\bigl(x+S(1)\in dy,\tau_x>1
\bigr) \mathbf{E}\bigl[u\bigl(y+S(n)\bigr),\tau_y>n\bigr].
\end{eqnarray*}
According to Lemma~\ref{lem0}, $\mathbf{E}[u(y+S(n)),\tau_y>n]\to V(y)$
for every $y\in K$. Furthermore,
it follows from (\ref{nom1}), (\ref{eq:sum.finite}) that $\mathbf
{E}[u(y+S(n)),\tau_y>n]\leq C(1+|y|^p)$.
This allows one to apply the dominated convergence theorem, which gives
\[
V(x)=\lim_{n\to\infty}\mathbf{E}\bigl[u\bigl(x+S(n+1)\bigr),
\tau_x>n+1\bigr]=\int_K\mathbf {P}
\bigl(x+S(1)\in dy,\tau_x>1\bigr)V(y). %
\]

To prove the positivity of $V(x)$, assume that $x\in K_+$.
Then for every $R>0$ there exists $n_0=n_0(R)$ such that
$\mathbf{P}(x+S(n_0)\in D_{R,\gamma},\tau_x>n_0)>0$ with
some $\gamma=\gamma(x)$. According to the first part of
the lemma, there exist $R>0$ such that
$\inf_{y\in D_{R,\gamma}}V(y)>0$. Consequently,
\begin{eqnarray*}
V(x)&=&\mathbf{E}\bigl[V\bigl(x+S(n_0)\bigr);\tau_x>n_0
\bigr]
\\
&\geq&\mathbf{E} \bigl[V\bigl(x+S(n_0)\bigr),x+S(n_0)
\in D_{R,\gamma},\tau _x>n_0 \bigr]>0.
\end{eqnarray*}

To prove (e), we first show that the same property holds for $u(x)$.
Indeed, if $x_0$ is such that $x_0+K\subset K$, then
\[
\bigl\{\tau_x^{\mathrm{bm}}>t \bigr\}\subset \bigl\{\tau
_{x+x_0}^{\mathrm{bm}}>t \bigr\} \qquad\mbox{for all }x\in K, t>0.
\]
Then, in view of (\ref{BS}),
\[
\varkappa u(x)=\lim_{t\to\infty}t^{p/2}\mathbf{P}\bigl(\tau
_x^{\mathrm{bm}}>t\bigr)\leq \lim_{t\to\infty}t^{p/2}
\mathbf{P}\bigl(\tau_{x+x_0}^{\mathrm{bm}}>t\bigr)=\varkappa
u(x+x_0). %
\]
Applying now Lemma~\ref{lem0}, we get
\begin{eqnarray*}
V(x)&=&\lim_{n\to\infty}\mathbf E \bigl[u\bigl(x+S(n)\bigr);
\tau_x>n \bigr]
\\
&\leq&\lim_{n\to\infty}\mathbf E \bigl[u\bigl(x+x_0+S(n)
\bigr);\tau _{x+x_0}>n \bigr]=V(x+x_0).
\end{eqnarray*}
Thus, the proof is finished.
\end{pf}

%%%%%%%%%%%%%%%%%%%%%%%%%%%%%%%%%%%%%%%%%%%%%%%%%%%%%%%%%%%%%%%%%%%%%%%%%%%%%%%%%%%%%%%%%%%%%%%%%%%%%%%%%%%%%%%%%%%%%
%s2.3 #&#
\subsection{An alternative construction of a harmonic function for
random walks with bounded jumps}\label{bounded}
In this paragraph, we show that $V$ remains well-defined and a strictly
positive harmonic function for random walks with bounded jumps if we
take $v(x)=u(x+x_*)$.

Assume that $\mathbf{P}(|X|\leq R)=1$ and let $x_*$ satisfy the
condition
\[
\operatorname{ dist}(x,\partial K)>R \qquad\mbox{for every }x\in K_*:=x_*+K. %
\]
(One can choose $x_*=t_*x_0$ with sufficiently large $t_*$.)
Therefore, $f(x)=\mathbf{E}v(x+X)-v(x)$ is well defined and the
statement of Lemma~\ref{lem:bound.f} is valid with $\delta=1$. This
implies, by the same arguments as in the proof of Lemma~\ref{lem0},
that
\[
\mathbf{E} \Biggl[\sum_{l=1}^{\tau_x-1}\bigl|f
\bigl(x+S(l)\bigr)\bigr| \Biggr]<C\bigl(1+|x|^{p-\delta}\bigr). %
\]
To show that $v(x+S(\tau_x))$ is integrable, we assume that
\[
u(x)\leq C|x|^{p-\delta}\operatorname{ dist}(x,\partial K). %
\]
(If $K$ is convex, then this inequality holds with $\delta=1$, see
\cite{Var99}, formula (0.2.3).) Since $\operatorname{
dist}(x_*+x+S(\tau_x),\partial K)$ is bounded, then in view of
Lemma~\ref{moments}
\[
\mathbf{E}v\bigl(x+S(\tau_x)\bigr)\leq C\mathbf{E}\bigl|x+S(
\tau_x)\bigr|^{p-\delta}<C\bigl(1+|x|^{p-\delta}\bigr).
\]
Thus, $V$ is well defined. Repeating the derivation of (\ref{eq00}),
we obtain
\[
V(x)=\lim_{n\to\infty}\mathbf{E}\bigl[u\bigl(x+x_0+S(n)
\bigr);\tau_x>n\bigr]. %
\]
This relation implies that $V$ is harmonic. The positivity follows
from Lem\-ma~\ref{positiv}.

Formally, $V$ might depend on $x_*$. But one can show, using the
coupling with the Brownian motion from the next section, that $V$ is
independent of $x_*$. It is sufficient to note that one can replace
$u(y)\sim u(x_*+y)$ under the conditions of Lemma~\ref{lem5} below.
%%%%%%%%%%%%%%%%%%%%%%%%%%%%%%%%%%%%%%%%%%%%%%%%%%%%%%%%%%%%%%%%%%%%%%%%%%%%%%%%%%%%%%%%%%%%%%%%%%%%%%%%%%%%%%%%%%%%%
%s2.4 #&#
\subsection{Construction of harmonic function in the one-dimensional
case}\label{one}
If $d=1$, then $K=(0,\infty)$. Random walks confined to the positive
half-line are well studied in the literature.
The main tool is the Wiener--Hopf factorization. This method allows one
to construct the harmonic function for any
oscillating random walk. It turns out that the ladder heights renewal
function is harmonic for $S(n)$ killed at
leaving $(0,\infty)$.

For the sake of completeness, we indicate how our method works for
one-dimensional random walks.

The harmonic function for the killed Brownian motion is $u(x)=x\mathrm{
1}_{\mathbb{R}_+}(x)$. We extend it to a harmonic
function on the whole axis by putting $u(x)=x$, $x\in\mathbb{R}$. Since
$u(x+S(n))$ is a martingale, the corrector
function $f\equiv0$. Therefore,
\[
V(x)=u(x)-\mathbf{E} \bigl[u\bigl(x+S(\tau_x)\bigr) \bigr].
\]
This function is strictly positive on $K$. It is well defined provided
that the expectation
$\mathbf{E}|u(x+S(\tau_x))|$ is finite. The latter property can be
shown by constructing an appropriate
positive supermartingale. Namely, put
\[
h(x)= \cases{ (R+x)^{1-a},&\quad $x> 0,$\vspace*{2pt}
\cr
|x|,&\quad $x\le0.$}
\]
Then, after some computations, one can show that for sufficiently large
$R$ and sufficiently small $a$ the
process $h(x+S(n\wedge\tau_x))$ is a positive supermartingale provided
$\mathbf E|X(1)|^{2+\delta}<\infty$ for
some $\delta>0$. Hence, by the optional stopping theorem
\[
\mathbf{E}\bigl|x+S(\tau_x)\bigr|\le(R+x)^{1-a}. %
\]
This shows the finiteness of $\mathbf{E}|u(x+S(\tau_x))|$. In addition,
this estimate implies that
$V(x)\sim x$ as $x\to\infty$.

%%%%%%%%%%%%%%%%%%%%%%%%%%%%%%%%%%%%%%%%%%%%%%%%%%%%%%%%%%%%%%%%%%%%%%%%%%%%%%%%%%%%%%%%%%%%%%%%%%%%%%%%%%%%%%%%%%%%%
%%%%%%%%%%%%%%%%%%%%%%%%%%%%%%%%%%%%%%%%%%%%%%%%%%%%%%%%%%%%%%%%%%%%%%%%%%%%%%%%%%%%%%%%%%%%%%%%%%%%%%%%%%%%%%%%%%%%%
%%%%%%%%%%%%%%%%%%%%%%%%%%%%%%%%%%%%%%%%%%%%%%%%%%%%%%%%%%%%%%%%%%%%%%%%%%%%%%%%%%%%%%%%%%%%%%%%%%%%%%%%%%%%%%%%%%%%%
%s3 #&#
\section{Coupling}

Let $\varepsilon>0$ be a constant and let
%
%e26 #&#
\begin{equation}
\label{defn.K} K_{n,\varepsilon}=\bigl\{x\in K\dvtx \operatorname{ dist}(x,\partial K)\ge
n^{1/2-\varepsilon}\bigr\}.
\end{equation}
Define
\[
\nu_n:=\min\bigl\{k\geq1\dvtx x+S(k)\in K_{n,\varepsilon}\bigr\}.
\]

%le14 #&#
\begin{lemma}
\label{lem1}
There exists a positive constant $C$ such that, for every $\varepsilon>0$,
\[
\mathbf{P}\bigl(\nu_n>n^{1-\varepsilon},\tau_x>n^{1-\varepsilon}
\bigr)\leq \exp\bigl\{ -Cn^{\varepsilon}\bigr\}. %
\]
\end{lemma}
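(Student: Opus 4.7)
The plan is a block decomposition combined with the Markov property. Let $T:=\lceil n^{1-2\varepsilon}\rceil$ and split the interval $[0,n^{1-\varepsilon}]$ into $N\sim n^{\varepsilon}$ consecutive blocks of length $T$. I will establish a lower bound $p>0$, uniform in the starting position of the walk and in $n$ (for $n$ large enough), on the probability that during a single block the walk either exits $K$ or enters $K_{n,\varepsilon}$. Iterating this bound via the Markov property at block endpoints then gives
\[
\mathbf{P}(\nu_n>n^{1-\varepsilon},\tau_x>n^{1-\varepsilon})\le (1-p)^{N}\le\exp\{-Cn^{\varepsilon}\}.
\]

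For the per-block estimate, pick $x_0\in K$ from the star-like assumption (in the convex case any interior point of $K$ works), so that $\delta:={\rm dist}(x_0+K,\partial K)>0$; by the cone property ${\rm dist}(cx_0+K,\partial K)\ge c\delta$ for every $c>0$. Set $c:=2n^{1/2-\varepsilon}/\delta$, so $cx_0$ lives on the scale $\sqrt{T}$. The event
\[
\mathcal{E}_n:=\bigl\{|S(T)-cx_0|\le n^{1/2-\varepsilon}\bigr\}
\]
depends only on the increments of the walk, not on the starting point. Since $T\to\infty$ with $cx_0/\sqrt{T}\to 2x_0/\delta$ and $n^{1/2-\varepsilon}/\sqrt{T}\to 1$, the central limit theorem yields $\mathbf{P}(\mathcal{E}_n)\to \mathbf{P}(|Z-2x_0/\delta|\le 1)>0$ with $Z\sim\mathcal{N}(0,I_d)$; in particular $\mathbf{P}(\mathcal{E}_n)\ge p>0$ for all sufficiently large $n$. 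Fix any $y\in K$. If $y+S(\cdot)$ stays in $K$ throughout $[0,T]$, then on $\mathcal{E}_n$
\[
{\rm dist}(y+S(T),\partial K)\ge {\rm dist}(y+cx_0,\partial K)-|S(T)-cx_0|\ge c\delta-n^{1/2-\varepsilon}=n^{1/2-\varepsilon},
\]
hence $y+S(T)\in K_{n,\varepsilon}$. If instead $y+S(\cdot)$ exits $K$ before time $T$, then the corresponding exit time is at most $T$. Either way, on $\mathcal{E}_n$ the joint event that the walk remains in $K\setminus K_{n,\varepsilon}$ throughout $[0,T]$ fails, so this probability is bounded by $1-p$ uniformly in $y\in K$.

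The $N$-fold iteration is then routine: at each block boundary $jT$, conditioning on $\mathcal{F}_{jT}$ and invoking the Markov property together with the per-block bound gives a factor $(1-p)$, so the total probability is at most $(1-p)^{N}\le \exp\{-cn^{\varepsilon}\}$ for $n$ large, and the finitely many small $n$ are absorbed by shrinking $C$. The only potentially delicate point is the uniformity in $y$ of the per-block estimate, but this is automatic because $\mathcal{E}_n$ involves only $S(T)$, while the starting position enters solely through the deterministic geometric inclusion $y+cx_0\in cx_0+K$, valid for every $y\in K$.
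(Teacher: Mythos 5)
Your proof is correct, and its skeleton is exactly the paper's: split $[0,n^{1-\varepsilon}]$ into roughly $n^{\varepsilon}$ blocks of length of order $n^{1-2\varepsilon}$, prove a positive lower bound, uniform in the current position, for the probability that a single block does not remain in $K\setminus K_{n,\varepsilon}$, and iterate via the Markov property at block endpoints to get $(1-p)^{n^{\varepsilon}/\text{const}}\leq\exp\{-Cn^{\varepsilon}\}$. The one genuine difference is the mechanism behind the per-block estimate. The paper does not use the star-like direction $x_0$ here: it notes that for large $a$ the set $K_{n,\varepsilon}\cup K^c$ occupies a uniformly positive volume fraction of the ball of radius $b_n\approx an^{1/2-\varepsilon}$ around any $y\in K\setminus K_{n,\varepsilon}$ (by scaling from the case $n=1$), and then applies the central limit theorem to get $\inf_y\mathbf{P}\big(y+S(b_n^2)\in K_{n,\varepsilon}\cup K^c\big)\geq c_1(a)>0$. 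You instead aim the block increment at the ball of radius $n^{1/2-\varepsilon}$ centred at $cx_0$ with $c=2n^{1/2-\varepsilon}/\delta$; since $y+cx_0$ lies at distance at least $c\delta=2n^{1/2-\varepsilon}$ from $\partial K$ for every $y\in K$, the relevant event involves only the increments, so uniformity in the starting point is automatic. Both routes rest on the CLT at the block scale; yours uses the star-like (or, in the convex case, the superadditivity of ${\rm dist}(\cdot,\partial K)$ on $K$, which you should state as the justification that any interior $x_0$ works) assumption explicitly and is arguably cleaner on the uniformity point, while the paper's volume-fraction argument needs no distinguished direction and only a density property of $K_{n,\varepsilon}\cup K^c$ near the boundary. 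Since star-likeness/convexity is a standing assumption of the paper, nothing is lost in your version; both arguments, like the paper's, implicitly take $\varepsilon<1/2$ so that the block length tends to infinity, and treat small $n$ by adjusting the constant.
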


\begin{pf}
Set, for brevity, $b_n=[n^{1/2-\varepsilon}]$, where $a$ is a positive number.

Clearly,
\begin{eqnarray*}
&&\mathbf P\bigl(\nu_n>n^{1-\varepsilon},\tau_x>n^{1-\varepsilon}
\bigr) \\
&&\qquad \le \mathbf P\bigl(x+S\bigl(b_n^2\bigr),x+S
\bigl(2b_n^2\bigr),\ldots,x+S\bigl(\bigl[n^\varepsilon
\bigr]b^2_n\bigr)\in K\setminus K_{n,\varepsilon}\bigr)
\\
&&\qquad\le \Bigl( \sup_{y\in K\setminus K_{n,\varepsilon}} \mathbf P\bigl(y+S
\bigl(b_n^2\bigr)\in K\setminus K_{n,\varepsilon}\bigr)
\Bigr)^{[n^\varepsilon]}.
\end{eqnarray*}
It follows from the scaling property of the cone that
\[
\sup_{y\in K\setminus K_{n,\varepsilon}} \mathbf P\bigl(y+S\bigl(b_n^2
\bigr)\in K\setminus K_{n,\varepsilon}\bigr) =\sup_{y\in K\setminus K_{1,\varepsilon}}
\mathbf P \biggl(y+\frac
{S(b_n^2)}{n^{1/2-\varepsilon}}\in K\setminus K_{1,\varepsilon} \biggr).
\]
Therefore, it is sufficient to show that the right-hand side is separated
from $1$. To this end, recall that there exists $x_0$ with $|x_0|=1$
such that $x_0+K\subset K$ and $\operatorname{ dist}(x_0+K,\partial K)>0$.
Then, by the scaling property of cones, for sufficiently large $t_0$,
the distance $\operatorname{ dist}(t_0x_0+K,\partial K)\ge1$.
Hence, $t_0x_0+K\subset K_{1,\varepsilon}$.

Let $d(x)=\operatorname{ dist}(x+K,K^c)$. Since the boundary of the cone is
continuous this function is continuous. We assumed that $d(x_0)>0$. Therefore,
the set $K_0=\{x\dvtx d(x)>0\}$ is open and nonempty. Since $K$ is a cone,
the set $K_0$ is a cone as well.
Since $K_0+K\subset K$, we have $y+K_0\subset K$ for every $y\in K$.
Consequently,
\[
t_0x_0+y+K_0\subset
t_0x_0+K\subset K_{1,\varepsilon} %
\]
for all $y\in K$.
This relation yields
\[
\sup_{y\in K\setminus K_{1,\varepsilon}} \mathbf P \biggl(y+\frac
{S(b_n^2)}{n^{1/2-\varepsilon}}\in K
\,\Big\backslash\, K_{1,\varepsilon} \biggr) \leq1-\mathbf{P} \biggl(\frac{S(b_n^2)}{n^{1/2-\varepsilon}}\in
t_0x_0+K_0 \biggr). %
\]
Further, by the central limit theorem,
\begin{eqnarray*}
\lim_{n\to\infty}\mathbf{P} \biggl(\frac
{S(b_n^2)}{n^{1/2-\varepsilon
}}\in
t_0x_0+K_0 \biggr) =\mathbf{P}
\bigl(B(1)\in t_0x_0+K_0 \bigr).
\end{eqnarray*}
Since $K_0$ is open, the probability $\mathbf{P} (B(1)\in
t_0x_0+K_0 )$
is strictly positive. This completes the proof of the lemma.
\end{pf}

%re15 #&#
\begin{remark}\label{conv=star}
We used in the proof of the last lemma that every convex cone is
starlike. Now we prove this fact.
Fix some $x_0\in\Sigma$. Then, due to convexity, $x_0+K\subset K$.
Assume that there exists $y\in K$
such that $\operatorname{ dist}(x_0+y,\partial K)< \operatorname{ dist}(x_0,\partial K)$.
Let $x\in\partial K$
satisfy $\operatorname{ dist}(x_0+y,\partial K)=\operatorname{ dist}(x_0+y,x)$. Using the
convexity of $K$ once again,
we see that there exists a hyperplane $H(x)$
such that $H(x)\cap K=\varnothing$ and $\operatorname{ dist}(x_0+y,H(x))<\operatorname{
dist}(x_0,\partial K)$.
But then $\operatorname{ dist}(x_0+y, H(x))<\operatorname{ dist}(x_0,H(x))$, and this
implies that the half-line
$\{x_0+ty,y>0\}$ cuts $H(x)$ and leaves the cone $K$, what contradicts
the fact that $x_0+K\subset K$.
\end{remark}

%le16 #&#
\begin{lemma}\label{lem2}
For every $\varepsilon>0$ the inequality
\[
\mathbf{E}\bigl[u\bigl(x+S\bigl(n^{1-\varepsilon}\bigr)\bigr);
\nu_n>n^{1-\varepsilon},\tau _x>n^{1-\varepsilon}\bigr]
\leq C(x)\exp\bigl\{-Cn^{c_{\varepsilon}}\bigr\} %
\]
holds.
\end{lemma}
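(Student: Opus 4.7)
The plan is to exploit two facts simultaneously: on the event $A:=\{\nu_n>n^{1-\varepsilon},\tau_x>n^{1-\varepsilon}\}$ the walk stays within distance $n^{1/2-\varepsilon}$ of $\partial K$ throughout, which makes $u(x+S(n^{1-\varepsilon}))$ \emph{pointwise small}; simultaneously, $\mathbf P(A)$ itself is exponentially small by Lemma~\ref{lem1}. The naive estimate $u(y)\le C|y|^p$ combined with Cauchy--Schwarz would require moments of $|X|$ of order $2p$, which we do not have when $p>2$; the boundary bound on $u$ is what circumvents this.

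First, on $A$ we have $y:=x+S(n^{1-\varepsilon})\in K\setminus K_{n,\varepsilon}$, hence $\mathrm{dist}(y,\partial K)\le n^{1/2-\varepsilon}$. Since $u|_{\partial K}=0$ and $|\nabla u(z)|\le C|z|^{p-1}$ on $K^{3\varepsilon}\supset K$ by Lemma~\ref{lem:bounds.u}, the same mean-value argument that produced inequality \eqref{bound.ineq} in the proof of Lemma~\ref{v-lemma} yields
$$
u(y)\le C|y|^{p-1}\mathrm{dist}(y,\partial K)\le Cn^{1/2-\varepsilon}|y|^{p-1}\quad\text{for }|y|\ge 1,
$$
while $u(y)$ is uniformly bounded for $|y|\le 1$. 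Hence on $A$,
$$
u\bigl(x+S(n^{1-\varepsilon})\bigr)\le Cn^{1/2-\varepsilon}\bigl(1+|x+S(n^{1-\varepsilon})|^{p-1}\bigr).
$$

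Next, pick $q>1$ such that $(p-1)q<\alpha$; such $q$ exists because $\alpha>p-1$ in both regimes of the moment assumption ($\alpha=p$ when $p>2$, $\alpha>2\ge p-1$ when $p\le 2$). H\"older's inequality gives
$$
\mathbf E\bigl[|x+S(n^{1-\varepsilon})|^{p-1};A\bigr]\le \bigl(\mathbf E|x+S(n^{1-\varepsilon})|^{(p-1)q}\bigr)^{1/q}\mathbf P(A)^{1-1/q}.
$$
By Marcinkiewicz--Zygmund (or Rosenthal) the first factor is polynomial in $n$ with constant depending on $|x|$, while by Lemma~\ref{lem1} the second is at most $\exp\{-C(1-1/q)n^{\varepsilon}\}$. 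Multiplying by the prefactor $Cn^{1/2-\varepsilon}$ and noting that all polynomial-in-$n$ factors are absorbed by the stretched exponential, one obtains the required bound with $c_\varepsilon$ any constant in $(0,\varepsilon)$. The contribution of the small-$|y|$ regime is $O(\mathbf P(A))$ and is harmless.

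The main obstacle is the borderline case $p>2$, $\alpha=p$, where there is no slack in the moment assumption for a direct H\"older argument on $u(y)\le C|y|^p$; the vanishing of $u$ on $\partial K$ and the gradient estimate of Lemma~\ref{lem:bounds.u} reduce the required moment from $p$ to $p-1+\delta$, which lies safely below $\alpha$ for sufficiently small $\delta>0$.
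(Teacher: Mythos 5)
Your proof is correct and follows essentially the same route as the paper: on the event in question you use the vanishing of $u$ on $\partial K$ together with the gradient bound of Lemma~\ref{lem:bounds.u} to get $u(x+S(n^{1-\varepsilon}))\le Cn^{1/2-\varepsilon}|x+S(n^{1-\varepsilon})|^{p-1}$, and then combine a polynomial moment bound for $|x+S(n^{1-\varepsilon})|$ with the exponential estimate of Lemma~\ref{lem1} via H\"older. The only (immaterial) difference is the choice of H\"older exponents: the paper takes exactly $(p,\,p/(p-1))$ and uses the martingale bound $\mathbf E|S(n^{1-\varepsilon})|^p\le Cn^{p/2}$, while you leave a little slack by choosing $q$ with $(p-1)q<\alpha$.
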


\begin{pf}
Since $\nu_n>n^{1-\varepsilon}$ and $\tau_x>n^{1-\varepsilon}$,
\[
\operatorname{ dist}\bigl(x+S\bigl(n^{1-\varepsilon}\bigr),\partial K\bigr)\le
n^{1/2-\varepsilon}. %
\]
Therefore, applying the Taylor formula (and recalling that $u$ vanishes on
the boundary), we obtain
\begin{eqnarray*}
u\bigl(x+S\bigl(n^{1-\varepsilon}\bigr)\bigr)&\le& C\bigl|x+S\bigl(n^{1-\varepsilon}
\bigr)\bigr|^{p-1}\operatorname{ dist}\bigl(x+S\bigl(n^{1-\varepsilon}\bigr),\partial K
\bigr)
\\
&\le& C\bigl|x+S\bigl(n^{1-\varepsilon}\bigr)\bigr|^{p-1}n^{1/2-\varepsilon}.
\end{eqnarray*}
Hence, by the H\"older inequality,
\begin{eqnarray*}
&&\mathbf{E}\bigl[u\bigl(x+S\bigl(n^{1-\varepsilon}\bigr)\bigr);
\nu_n>n^{1-\varepsilon},\tau _x>n^{1-\varepsilon}\bigr]
\\
&&\qquad\le C n^{1/2-\varepsilon}\mathbf{E}\bigl[\bigl|x+S\bigl(n^{1
-\varepsilon
}
\bigr)\bigr|^{p-1};\nu _n>n^{1-\varepsilon},\tau_x>n^{1-\varepsilon}
\bigr]
\\
&&\qquad\le C n^{1/2}\mathbf{E}\bigl[\bigl|x+S\bigl(n^{1-\varepsilon}
\bigr)\bigr|^{p}\bigr]^{(p-1)/p}\mathbf P\bigl(\nu_n>n^{1-\varepsilon},
\tau_x>n^{1-\varepsilon}\bigr)^{1/p}.
\end{eqnarray*}
An application of Lemma~\ref{lem1} and a classical martingale bound
\[
\mathbf E\bigl|S\bigl(n^{1-\varepsilon}\bigr)\bigr|^p\le Cn^{p/2}
\]
gives the required exponential bound.
\end{pf}

We start by formulating an estimate of the quality of the normal
approximation of high-dimensional random walks which follows from
a result of G{\"e}ttse and Zaitsev \cite{GZ10}; see Theorem~4 there.

%le17 #&#
\begin{lemma}
\label{lem4}
If $\mathbf{E}|X|^{2+\delta}<\infty$ for some $\delta\in(0,1)$, then
one can define a random walk with the same distribution as $S(n)$
and a Brownian motion $B(t)$ on the same probability
space such that, for any $\gamma$ satisfying
$0<\gamma<\frac{\delta}{2(2+\delta)}$,
%
%e27 #&#
\begin{equation}
\label{L4} \mathbf{P} \Bigl(\sup_{u\leq n}\bigl|S\bigl([u]
\bigr)-B(u)\bigr|\geq n^{1/2-\gamma
} \Bigr)\leq Cn^{2\gamma+\gamma\delta-\delta/2}.
\end{equation}
\end{lemma}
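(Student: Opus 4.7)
The plan is to deduce \eqref{L4} directly from Sakhanenko's strong approximation theorem by a single application of Markov's inequality. Sakhanenko's result (Corollary 3.2 of \cite{Sakh00}) provides, under $\mathbf{E}|X|^{2+\delta}<\infty$, a coupling of $S([\cdot])$ with a standard $d$-dimensional Brownian motion $B(\cdot)$ on a common probability space such that
$$
\mathbf{E}\left[\sup_{u\leq n}|S([u])-B(u)|^{2+\delta}\right]\leq Cn.
$$
The linear scaling in $n$ on the right-hand side is the essential quantitative input; it reflects the moment condition together with the i.i.d.\ structure of the increments.

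First, I would invoke this moment bound as a black box. Second, I would apply Markov's inequality at threshold $n^{1/2-\gamma}$:
$$
\mathbf{P}\left(\sup_{u\leq n}|S([u])-B(u)|\geq n^{1/2-\gamma}\right)\leq \frac{\mathbf{E}\left[\sup_{u\leq n}|S([u])-B(u)|^{2+\delta}\right]}{n^{(1/2-\gamma)(2+\delta)}}\leq \frac{Cn}{n^{(1/2-\gamma)(2+\delta)}}.
$$
Expanding the exponent in the denominator as $(1/2-\gamma)(2+\delta)=1+\delta/2-2\gamma-\gamma\delta$, the right-hand side simplifies to $Cn^{2\gamma+\gamma\delta-\delta/2}$, which is precisely \eqref{L4}. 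Note that the constraint $\gamma<\delta/(2(2+\delta))$ is exactly what makes the exponent $\gamma(2+\delta)-\delta/2$ strictly negative, so the bound is genuinely decaying and useful in the subsequent applications.

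The only potential subtlety lies in extracting a multidimensional statement from Sakhanenko's bound, which in its original form is one-dimensional. Since the normalisation assumption of the paper ensures that the coordinates of $X$ are uncorrelated with unit variance, one can either appeal to the $\mathbb{R}^d$-valued version provided by \cite{Sakh00} directly, or apply the one-dimensional version componentwise (each coordinate $X_j$ inherits $\mathbf{E}|X_j|^{2+\delta}<\infty$ from $\mathbf{E}|X|^{2+\delta}<\infty$) and assemble the $d$ component Brownian motions into a single $d$-dimensional one, absorbing a factor of $d$ into the constant $C$. Beyond this packaging step, the argument is a purely mechanical Markov inequality computation, and I do not expect any real obstacle.
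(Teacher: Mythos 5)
Your overall strategy (couple via Sakhanenko, then use a Chebyshev/Markov-type bound at level $n^{1/2-\gamma}$ and check the exponent arithmetic) is the same as the paper's, and the exponent computation $(1/2-\gamma)(2+\delta)=1+\delta/2-2\gamma-\gamma\delta$ is correct. However, there is a genuine gap in the black box you invoke. Corollary 3.2 of \cite{Sakh00} controls the coupling discrepancy only at the matching (integer) times, i.e.\ it bounds $\max_{k\leq n}|S(k)-B(k)|$, and it does so in the form of a tail estimate
$\mathbf{P}\bigl(\max_{k\leq n}|S(k)-B(k)|\geq x\bigr)\leq C x^{-(2+\delta)}\, n\bigl(\mathbf{E}|X(1)|^{2+\delta}+\mathbf{E}|B(1)|^{2+\delta}\bigr)$,
not as the moment bound $\mathbf{E}\bigl[\sup_{u\leq n}|S([u])-B(u)|^{2+\delta}\bigr]\leq Cn$ that you assert. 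Two things are off: first, the continuous-time supremum $\sup_{u\leq n}|S([u])-B(u)|$ involves the Brownian motion at non-integer times, which the coupling statement does not see at all; second, even at integer times, a moment bound of order $n$ is strictly stronger than the tail bound actually provided (one cannot recover $\mathbf{E}Z^{2+\delta}\leq Cn$ from $\mathbf{P}(Z\geq x)\leq Cnx^{-(2+\delta)}$ — the integral diverges logarithmically), so citing the corollary in your form misattributes the statement.

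The missing step is precisely what the paper adds: by the triangle inequality,
$\sup_{u\leq n}|S([u])-B(u)|\leq \max_{k\leq n}|S(k)-B(k)|+\sup_{u\leq n}|B(u)-B([u])|$,
and the oscillation term $\sup_{u\leq n}|B(u)-B([u])|$ is handled separately by a union bound over the $n$ unit intervals and the reflection principle, yielding a contribution of order $n\exp\{-cn^{1-2\gamma}\}$ at level $\tfrac12 n^{1/2-\gamma}$, which is negligible compared with $n^{2\gamma+\gamma\delta-\delta/2}$. This repair is easy (the oscillation has Gaussian tails and is of order $\sqrt{\log n}$), but it must appear; as written, your proof attributes to Sakhanenko a statement he does not make. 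Your remark about passing from the one-dimensional to the $d$-dimensional setting componentwise is fine and consistent with how the paper applies the corollary.
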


\begin{pf}
According to Theorem~4 and (1.13) of \cite{GZ10}, one can construct on
a joint
probability space a copy of $S(n)$ and a standard Gaussian random walk $W(n)$
satisfying
\begin{eqnarray*}
&&\mathbf{P} \biggl(\max_{k\leq n}\bigl|S(k)-W(k)\bigr|\geq
\frac
{1}{2}n^{1/2-\gamma
} \biggr)
\\
&&\qquad\leq C \biggl(\frac{1}{2}n^{1/2-\gamma} \biggr)^{-(2+\delta)} n
\mathbf{E}\bigl|X(1)\bigr|^{2+\delta}
\\
&&\qquad\leq Cn^{2\gamma+\gamma\delta-\delta/2}.
\end{eqnarray*}
But, in view of the classical L\'evy construction of the Brownian
motion, we may
assume that there is a Brownian motion $B(t)$ on the same probability space
with the property $B(k)=W(k), k\geq0$. Therefore,
%
%e28 #&#
\begin{equation}
\label{lem4.1} \mathbf{P} \biggl(\max_{k\leq n}\bigl|S(k)-B(k)\bigr|\geq
\frac
{1}{2}n^{1/2-\gamma
} \biggr) \leq Cn^{2\gamma+\gamma\delta-\delta/2}.
\end{equation}

Moreover,
%
%e29 #&#
\begin{eqnarray}
\label{lem4.2}
\nonumber
\mathbf{P} \biggl(\sup_{u\leq n}\bigl|B(u)-B
\bigl([u]\bigr)\bigr|\geq\frac
{1}{2}n^{1/2-\gamma
} \biggr)&\leq& n\mathbf{P}
\biggl(\sup_{t\leq1}\bigl|B(t)\bigr|\geq\frac{1}{2}n^{1/2-\gamma
}
\biggr)
\\
&\leq& dn\mathbf{P} \biggl(\sup_{t\leq1}\bigl|B_1(t)\bigr|
\geq\frac{1}{2\sqrt {d}}n^{1/2-\gamma} \biggr)
\\
&\leq&\frac{4dn}{\sqrt{2\pi}} \int_{n^{1/2-\gamma}/2\sqrt
{d}}^\infty
e^{-u^2/2}\,du.\nonumber
\end{eqnarray}
In the last step, we used the reflection principle and the bound
\[
\mathbf{P} \Bigl(\sup_{t\leq1}\bigl|B_1(t)\bigr|\geq x
\Bigr)\leq2\mathbf {P} \Bigl(\sup_{t\leq1}B_1(t)\geq
x \Bigr). %
\]
By the triangle inequality,
\begin{eqnarray*}
\mathbf{P} \Bigl(\sup_{u\leq n}\bigl|S\bigl([u]\bigr)-B(u)\bigr|\geq
n^{1/2-\gamma
} \Bigr)&\le& \mathbf{P} \biggl(\max_{k\leq n}\bigl|S(k)-B(k)\bigr|
\geq\frac
{1}{2}n^{1/2-\gamma
} \biggr)
\\
&&{}+\mathbf{P} \biggl(\sup_{u\leq n}\bigl|B(u)-B\bigl([u]\bigr)\bigr|\geq
\frac
{1}{2}n^{1/2-\gamma} \biggr).
\end{eqnarray*}
Applying (\ref{lem4.1}) and (\ref{lem4.2}), we complete the proof.
\end{pf}

%le18 #&#
\begin{lemma}
\label{lem6}
There exists a finite constant $C$ such that
%
%e30 #&#
\begin{equation}
\label{L6.1} \mathbf{P}\bigl(\tau^{\mathrm{bm}}_{x}>t\bigr)\leq C
\frac{|x|^p}{t^{p/2}},\qquad x\in K.
\end{equation}
Moreover,
%
%e31 #&#
\begin{equation}
\label{L6.2} \mathbf{P}\bigl(\tau^{\mathrm{bm}}_{x}>t\bigr)\sim
\varkappa\frac{u(x)}{t^{p/2}},
\end{equation}
uniformly in $x\in K$ satisfying $|x|\le\theta_t\sqrt{t}$ with some
$\theta_t\to0$. Finally, the density $b_{t}(x,z)$ of the
probability $ \mathbf{P}(\tau^{\mathrm{bm}}_{x}>t, x+B(t)\in dz) $ is
%
%e32 #&#
\begin{equation}
\label{L6.3} b_t(x,z)\sim\varkappa_0
t^{-d/2 } e^{-|z|^2/(2t)}u(x)u(z) t^{-p}
\end{equation}
uniformly in $x,z \in K$ satisfying $|x|\le\theta_t\sqrt{t}$ and
$|z|\le\sqrt{t/\theta_t}$ with some \mbox{$\theta_t\to0$}.
\end{lemma}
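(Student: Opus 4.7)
The plan is to exploit the Brownian scaling identity $\tau^{bm}_x \stackrel{d}{=} \lambda^{-2}\tau^{bm}_{\lambda x}$ together with a uniform version of the Banuelos--Smits asymptotic (\ref{BS}) (and its density counterpart) over the compact set $\Sigma = K \cap \mathbb{S}^{d-1}$. Specifically, scaling gives
$$
\mathbf{P}(\tau^{bm}_x > t) = \mathbf{P}\bigl(\tau^{bm}_{x/|x|} > t/|x|^2\bigr),
$$
so parts (\ref{L6.1}) and (\ref{L6.2}) both reduce to large-$T$ control on $\mathbf{P}(\tau^{bm}_y > T)$ with $y \in \Sigma$.

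For (\ref{L6.1}), if $|x|^2 \geq t$ the inequality is trivial, and otherwise $T = t/|x|^2 \to \infty$, so a uniform upper bound $\mathbf{P}(\tau^{bm}_y > T) \leq C T^{-p/2}$ over $y \in \Sigma$ (on which $u$ is bounded) rearranges to (\ref{L6.1}). For (\ref{L6.2}), the hypothesis $|x| \leq \theta_t \sqrt t$ forces $T \to \infty$ uniformly in the admissible $x$, and a uniform version of (\ref{BS}) together with the homogeneity $u(\lambda y) = \lambda^p u(y)$ from (\ref{u.from.m}) gives
$$
\mathbf{P}(\tau^{bm}_x > t) \sim \varkappa\, u(x/|x|)\, T^{-p/2} = \varkappa\, u(x)\, t^{-p/2}
$$
uniformly in the allowed $x$.

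For the density bound (\ref{L6.3}) I would start from the explicit spectral expansion of the transition density of the Brownian motion killed on leaving $K$,
$$
b_t(x,z) = \frac{e^{-(|x|^2+|z|^2)/2t}}{t(|x||z|)^{d/2-1}} \sum_{j \geq 1} m_j\!\left(\tfrac{x}{|x|}\right) m_j\!\left(\tfrac{z}{|z|}\right) I_{p_j + d/2 - 1}\!\left(\frac{|x||z|}{t}\right),
$$
with $p_j = \sqrt{\lambda_j + (d/2-1)^2} - (d/2-1)$ and $I_\nu$ the modified Bessel function of the first kind; see \cite{BS97, DeB87}. Under the hypothesis one has $|x||z|/t \leq \sqrt{\theta_t} \to 0$, so substituting the small-argument asymptotic $I_\nu(u) \sim (u/2)^\nu/\Gamma(\nu+1)$ into the $j=1$ term, using $|x|^p m_1(x/|x|) = u(x)$ (and the analogue for $z$), and noting that $e^{-|x|^2/2t} \to 1$ because $|x|^2/t \leq \theta_t^2 \to 0$, yields the claim (\ref{L6.3}) with $\varkappa_0 = 2^{1-p-d/2}/\Gamma(p+d/2)$.

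The principal obstacle is controlling the tail $\sum_{j \geq 2}$ of the spectral series uniformly in the admissible region. Since $p_j$ is nondecreasing in $j$, each higher term carries an extra factor $(|x||z|/t)^{p_j - p_1} \leq \theta_t^{(p_j - p_1)/2}$, so summability reduces to standard Weyl-type growth estimates on $\lambda_j$ and on the sup-norms $\|m_j\|_\infty$ for the regular spherical domain $\Sigma$. The uniform version of (\ref{BS}) used for (\ref{L6.1})--(\ref{L6.2}) follows from the same series upon integrating in $z$ over $K$, so once the spectral tail is handled, all three parts fall into place simultaneously.
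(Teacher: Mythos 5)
Your overall architecture is the same as the paper's: the density claim (\ref{L6.3}) via the spectral/Bessel expansion of the killed heat kernel from \cite{BS97}, small-argument asymptotics of $I_\nu$, and Weyl-type bounds on $\lambda_j$; your main-term computation and the constant $\varkappa_0=2^{1-p-d/2}/\Gamma(p+d/2)$ are correct, and the scaling reduction of (\ref{L6.1})--(\ref{L6.2}) to $T=t/|x|^2\to\infty$ is fine. However, there is a genuine gap precisely at the point you flag as ``the principal obstacle'': controlling the tail $\sum_{j\ge2}$ by Weyl growth of $\lambda_j$ together with \emph{sup-norms} $\|m_j\|_\infty$ is not sufficient for the uniformity asserted in (\ref{L6.2}) and (\ref{L6.3}). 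Both statements are claimed uniformly in $x$ (and $z$) all the way up to $\partial K$, where the main term is proportional to $m_1(x/|x|)$, which vanishes. With only sup-norm bounds the tail is estimated by $C(|x|^2/t)^{a_2/2}$ \emph{without} the factor $m_1(x/|x|)$, so the relative error is of order $(|x|^2/t)^{(a_2-a_1)/2}/m_1(x/|x|)$, which is not uniformly small: take $|x|=\theta_t\sqrt t$ and let the direction $x/|x|$ approach $\partial\Sigma$. The ingredient you are missing is the pointwise ground-state domination of Lemma 5 in \cite{BS97},
\begin{equation*}
|m_j(\theta)|\le C\,2^{a_j/2}\sqrt{\Gamma(a_j+d/2)}\;m_1(\theta),\qquad \theta\in\Sigma,
\end{equation*}
which bounds every higher eigenfunction by a controlled multiple of $m_1$ itself; this is exactly what the paper uses, and it makes the $j\ge2$ tail comparable to the main term with a factor $O\bigl((|x|^2/t)^{(a_2-a_1)/2}\bigr)\to0$ uniformly, including near the boundary. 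The same domination is needed in your treatment of (\ref{L6.3}) for both $x$ and $z$ close to $\partial K$.

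A secondary weak point: you propose to obtain the uniform version of (\ref{BS}) needed for (\ref{L6.1})--(\ref{L6.2}) by integrating the kernel expansion in $z$ over $K$. Over the full cone the small-argument Bessel regime $|x||z|/t\to0$ is not available for all $z$, so this route needs an extra argument for the contribution of large $|z|$. The paper sidesteps this by invoking the already integrated series (Theorem 1 of \cite{BS97}) for $\mathbf P(\tau_x^{bm}>t)$ in terms of ${}_1F_1$ functions, bounding ${}_1F_1$ by $e^{|x|^2/2t}$ on $\{|x|^2\le t\}$, and then running the same ground-state-domination plus Weyl-formula argument to sum the series; note also that for (\ref{L6.1}) the case $|x|^2>t$ is trivial and otherwise the bound $Cu(x)/t^{p/2}\le C|x|^p/t^{p/2}$ already comes out of that series estimate, so no separate compactness argument over $\Sigma$ is required. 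Once you replace the sup-norm bounds by the $m_1$-domination estimate and use the ${}_1F_1$ series for the integrated quantity, your proof coincides with the paper's.
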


These statements can be derived from estimates in \cite{BS97}.

\begin{pf*}{Proof of Lemma \ref{lem6}}
According to Theorem~1 of \cite{BS97},
%
%e33 #&#
\begin{equation}
\label{lem6.1} \mathbf{P}\bigl(\tau^{\mathrm{bm}}_x>t\bigr)=\sum
_{j=1}^\infty B_j \biggl(
\frac
{|x|^2}{2t} \biggr)^{a_j/2} { }_1F_1
\biggl(\frac{a_j}{2},a_j+\frac{d}{2},\frac{-|x|^2}{2t}
\biggr)m_j \biggl(\frac{x}{|x|} \biggr),
\end{equation}
where
\[
a_j:=\sqrt{\lambda_j+ \biggl(\frac{d}{2}-1
\biggr)^2}-\frac{d}{2}+1 %
\]
and
\[
B_j:=\frac{\Gamma ({(a_j+d)}/{2} )}{\Gamma
(a_j+
{d}/{2} )}\int_\Sigma
m_j(\theta)\,d\theta. %
\]
By the definition,
%
%e34 #&#
\begin{equation}
\label{lem6.2} { }_1F_1 (a,b,z )=1+
\frac{a}{b}\frac{z}{1!}+\frac
{a(a+1)}{b(b+1)}\frac{z^2}{2!}+\cdots.
\end{equation}
Then, for all $x\in K$ with $|x|^2\leq t$, we have
\[
{ }_1F_1 \biggl(\frac{a_j}{2},a_j+
\frac{d}{2},\frac{-|x|^2}{2t} \biggr)\leq e^{|x|^2/2t}\leq
e^{1/2}. %
\]
Furthermore, in view of Lemma~5 of \cite{BS97},
%
%e35 #&#
\begin{eqnarray}
\label{lem6.3} \bigl|m_j(\theta)\bigr|&\leq&\frac{C}{\sqrt{I_{a_j-1+d/2}(1)}}m_1(
\theta)
\nonumber
\\[-8pt]
\\[-8pt]
\nonumber
&\leq & C2^{a_j/2}\sqrt{\Gamma(a_j+d/2)}m_1(
\theta), \qquad\theta\in\Sigma,
\end{eqnarray}
where\vspace*{1pt} $I_\nu(x)=\sum_{m=0}^\infty\frac{1}{m!\Gamma(m+\nu
+1)}(x/2)^{\nu
+2m}$ is the modified Bessel function.
Applying (\ref{lem6.2}) and (\ref{lem6.3}) to the corresponding terms
in (\ref{lem6.1}), we obtain
\[
\mathbf{P}\bigl(\tau^{\mathrm{bm}}_x>t\bigr)\leq
Cm_1 \biggl(\frac{x}{|x|} \biggr) \sum
_{j=1}^\infty B_j 2^{a_j/2}\sqrt{
\Gamma(a_j+d/2)} \biggl(\frac
{|x|^2}{2t} \biggr)^{a_j/2}.
\]
Using the Stirling formula and (2.3) from \cite{BS97}, one can easily get
\[
B_j 2^{a_j/2}\sqrt{\Gamma(a_j+d/2)}\leq C
\lambda_j^{d/4}. %
\]
Consequently,
\[
\mathbf{P}\bigl(\tau^{\mathrm{bm}}_x>t\bigr)\leq
Cm_1 \biggl(\frac{x}{|x|} \biggr) \sum
_{j=1}^\infty\lambda_j^{d/4}
\biggl(\frac{|x|^2}{2t} \biggr)^{a_j/2}. %
\]
According to the Weyl asymptotic formula, see \cite{Chavel}, page~172,
\[
cj^{2/(d-1)}\leq\lambda_j\leq Cj^{2/(d-1)}. %
\]
This implies that
\[
\sum_{j=1}^\infty\lambda_j^{d/4}
\biggl(\frac{|x|^2}{2t} \biggr)^{a_j/2} \leq C \biggl(\frac{|x|^2}{2t}
\biggr)^{a_1/2} %
\]
for all $x$ satisfying $|x|^2\leq t$. Therefore,
%
%e36 #&#
\begin{equation}
\label{lem6.4} \mathbf{P}\bigl(\tau^{\mathrm{bm}}_x>t\bigr)\leq
Cm_1 \biggl(\frac{x}{|x|} \biggr) \biggl(\frac
{|x|^2}{2t}
\biggr)^{a_1/2} =C\frac{u(x)}{t^{p/2}}, \qquad |x|^2\leq t.
\end{equation}
This immediately implies that (\ref{L6.1}) holds.

The same arguments give also
\[
\sum_{j=2}^\infty B_j \biggl(
\frac{|x|^2}{2t} \biggr)^{a_j/2} { }_1F_1
\biggl(\frac{a_j}{2},a_j+\frac{d}{2},\frac{-|x|^2}{2t}
\biggr)m_j \biggl(\frac{x}{|x|} \biggr) \leq Cm_1
\biggl(\frac{x}{|x|} \biggr) \biggl(\frac{|x|^2}{2t} \biggr)^{a_2/2}.
\]
Since $a_2>a_1$,
\[
\mathbf{P}\bigl(\tau_x^{\mathrm{bm}}>t\bigr)\sim
B_1 \biggl(\frac{|x|^2}{2t} \biggr)^{a_1/2} {
}_1F_1 \biggl(\frac{a_1}{2},a_1+
\frac{d}{2},\frac{-|x|^2}{2t} \biggr)m_1 \biggl(
\frac{x}{|x|} \biggr) %
\]
uniformly in $|x|\leq\theta_t\sqrt{t}$. Noting that ${ }_1F_1
(\frac{a_1}{2},a_1+\frac{d}{2},\frac{-|x|^2}{2t} )\to1$
uniformly in $|x|\leq\theta_t\sqrt{t}$, we get (\ref{L6.2}).

According to Lemma~1 from \cite{BS97},
\[
b_t(x,z)=\frac{e^{-(|x|^2+|z|^{2})/2t}}{t|x|^{d/2-1}|z|^{d/2-1}}\sum_{j=1}^\infty
I_{a_j-1+d/2} \biggl(\frac{|x||z|}{t} \biggr)m_j \biggl(
\frac
{x}{|x|} \biggr) m_j \biggl(\frac{z}{|z|} \biggr).
\]
{F}rom the assumptions $|x|\leq\theta_t\sqrt{t}$ and $|z|\leq\sqrt {t/\theta_t}$, we get
uniform convergence as $\frac{|x||z|}{t}\to0$. Recalling the definition
of the Bessel functions and using (\ref{lem6.3}), we obtain
\[
b_t(x,z)\sim\frac{1}{\Gamma(a_1+d/2)}\frac
{e^{-(|x|^2+|z|^{2})/2t}}{t|x|^{d/2-1}|z|^{d/2-1}} \biggl(
\frac{|x||z|}{2t} \biggr)^{a_1-1+d/2}m_1 \biggl(
\frac
{x}{|x|} \biggr) m_1 \biggl(\frac{z}{|z|} \biggr)
\]
uniformly in $|x|\leq\theta_t\sqrt{t}$ and $|z|\leq\sqrt{t/\theta_t}$.
Simplifying this expression, and recalling the definitions of $p$ and
$u$, we get
\[
b_t(x,z)\sim\kappa_0 u(x)u(z)e^{-(|x|^2+|z|^{2})/2t}
t^{-p-d/2}. %
\]
Noting that $e^{-|x^2|/2t}\to1$, we obtain (\ref{L6.3}).
\end{pf*}

%le19 #&#
\begin{lemma}
\label{lem66}
If $K$ is convex then there exists a finite constant $C$ such that
\[
u(y)\geq C \bigl(\operatorname{ dist}(y,\partial K) \bigr)^p,\qquad y\in K.
\]
If $K$ is starlike and $C^2$, then
\[
u(y)\geq C|y|^{p-1}\operatorname{ dist}(y,\partial K),\qquad y\in K. %
\]
\end{lemma}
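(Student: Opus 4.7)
The two estimates require distinct arguments that reflect the two regularity hypotheses. I would prove the star-like $C^2$ case via the Hopf boundary-point lemma applied to $m_1$, while the convex case is obtained by homogeneity together with the inscribed-ball property and the monotonicity of $u$.

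For star-like $C^2$ cones, since $\partial\Sigma$ is a $C^2$ submanifold of $\mathbb{S}^{d-1}$, standard elliptic regularity up to the boundary applied to \eqref{eq.eigen} yields $m_1\in C^2(\overline{\Sigma})$. Because $m_1$ vanishes on $\partial\Sigma$ and is strictly positive on $\Sigma$, the Hopf boundary-point lemma produces a uniformly positive inward normal derivative along $\partial\Sigma$, and a first-order Taylor expansion together with compactness of $\overline{\Sigma}$ gives $m_1(\theta)\geq c\,\mathrm{dist}_{\mathbb{S}^{d-1}}(\theta,\partial\Sigma)$ throughout $\Sigma$. On the other hand, the radial projection argument (take the point $r\theta^{*}\in\partial K$ with $\theta^{*}\in\partial\Sigma$ nearest to $y/|y|$) gives the elementary bound ${\rm dist}(y,\partial K)\leq |y|\,\mathrm{dist}_{\mathbb{S}^{d-1}}(y/|y|,\partial\Sigma)$, which does not even need the $C^{2}$ hypothesis. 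Substituting both estimates into $u(y)=|y|^{p}m_1(y/|y|)$ then gives the required bound $u(y)\geq C|y|^{p-1}{\rm dist}(y,\partial K)$.

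For the convex case, the $p$-homogeneity of $u$ and the $1$-homogeneity of ${\rm dist}(\cdot,\partial K)$ reduce the claim to showing $u(y)\geq c$ uniformly over the level set $\{y\in K:{\rm dist}(y,\partial K)=1\}$. Convexity gives the inscribed-ball property $B(y,1)\subset K$ and also the monotonicity $u(y+x_0)\geq u(y)$ for every $x_0\in K$: the proof of Lemma~\ref{positiv}(e) applies to $u$ itself, since $x_0+K\subset K$ whenever $K$ is convex and $x_0\in K$. I would then fix an axial reference point $y_{*}=v/d(v)$ with $v\in K$ a unit vector and $d(v)>0$, note that $u(y_*)>0$, and connect any $y$ in the level set to $y_{*}$ by a straight segment lying inside the convex set $\{z\in K:{\rm dist}(z,\partial K)\geq 1/2\}$, along which Harnack's inequality can be iterated in balls of radius $1/4$.

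The main obstacle is the convex case. The boundary $\partial\Sigma$ may have corners where $m_1$ is not $C^{2}$, so Hopf is unavailable; moreover, the level set $\{{\rm dist}(\cdot,\partial K)=1\}$ is unbounded, so a direct compactness argument fails and a naive Harnack chain from $y$ to $y_{*}$ would have length growing with $|y|$. The resolution is to exploit the self-similar scaling of $K$ together with the $K$-monotonicity of $u$: one first uses the monotonicity to replace an arbitrary $y$ with a point of comparable $d$ lying in a bounded region near the axial ray $\{tv:t>0\}$, and only then invokes a Harnack chain of length controlled independently of $|y|$.
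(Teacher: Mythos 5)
Your plan is correct, but it proves the lemma by a genuinely different route than the paper. The paper's proof is probabilistic and very short: for convex $K$ it combines the Varopoulos bound $\mathbf{P}(\tau^{bm}_x>t)\leq Cu(x)t^{-p/2}$, valid for all $t>0$ in convex cones (Theorem 1 and (0.4.1) of \cite{Var99}), with the trivial inclusion $\{\sup_{s\le t}|B(s)|<{\rm dist}(y,\partial K)\}\subset\{\tau^{bm}_y>t\}$ evaluated at $t=({\rm dist}(y,\partial K))^2$, and for the star-like $C^2$ case it simply cites (0.2.1) of \cite{Var99}. You instead argue analytically: Hopf's boundary point lemma for $m_1$ (legitimate, since $m_1$ is a positive supersolution of the Laplace--Beltrami operator vanishing on a $C^2$ boundary with uniform interior balls) plus the radial projection bound ${\rm dist}(y,\partial K)\le|y|\,{\rm dist}_{\mathbb{S}^{d-1}}(y/|y|,\partial\Sigma)$ gives the second inequality; and for convex $K$ you use $p$-homogeneity together with the monotonicity $u(x)\le u(x+x_0)$, $x_0\in K$, extracted from (\ref{BS}) exactly as in the proof of Lemma \ref{positiv}(e). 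Your convex-case argument does close, and in fact more simply than you state: since ${\rm dist}(y,\partial K)=1$ gives $B(y,1)\subset K$, the fixed point $z_*=v/2$ (with $v\in\Sigma$) satisfies $y-z_*\in K$, so monotonicity yields $u(y)\ge u(z_*)>0$ directly and the entire Harnack-chain step is superfluous; homogeneity then finishes. Two minor remarks: claiming $m_1\in C^2(\overline\Sigma)$ from a $C^2$ boundary is a slight overstatement ($C^{1,\alpha}$ up to the boundary, or the quantitative Hopf lemma, is what you actually need and have), and your route has the advantage of being more self-contained, trading the external estimates (0.4.1) and (0.2.1) of \cite{Var99} for the already-established asymptotics (\ref{BS}) and standard elliptic boundary theory, at the cost of a longer write-up than the paper's three-line argument.
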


\begin{pf}
It is clear that
\[
\bigl\{\tau^{\mathrm{bm}}_y>t\bigr\}\supset\Bigl\{\sup
_{s\le t}\bigl|B(s)\bigr|<\operatorname{ dist}(y,\partial K)\Bigr\}. %
\]
Using the scaling property, we obtain
\[
\mathbf{P}\bigl(\tau^{\mathrm{bm}}_y>t\bigr)\geq\mathbf{P}
\biggl(\sup_{s\leq
1}\bigl|B(s)\bigr|<\frac
{\operatorname{
dist}(y,\partial K)}{\sqrt{t}} \biggr). %
\]
If $K$ is convex, then it has been proved in \cite{Var99}, see Theorem~1 and (0.4.1) there,
that
%
%e37 #&#
\begin{equation}
\label{L6.1a} \mathbf{P}\bigl(\tau^{\mathrm{bm}}_x>t\bigr)\leq C
\frac{u(x)}{t^{p/2}},\qquad x\in K,t>0.
\end{equation}

Using this bound with $t=(\operatorname{ dist}(y,\partial K))^2$,
we get
\[
\frac{u(y)}{(\operatorname{ dist}(y,\partial K))^p}\geq C\mathbf{P} \bigl(\tau^{\mathrm{bm}}_y>
\bigl(\operatorname{ dist}(y,\partial K)\bigr)^2 \bigr) \geq C \mathbf{P} \Bigl(
\sup_{s\leq1}\bigl|B(s)\bigr|<1 \Bigr). %
\]
Thus, the first statement is proved. The second one follows easily from
(0.2.1) in~\cite{Var99}.
\end{pf}

Using the coupling, we can translate the results of Lemma~\ref{lem6} to
the random walks setting when $y\in K_{n,\varepsilon}$.

%le20 #&#
\begin{lemma}\label{lem5}
For all sufficiently small $\varepsilon>0$,
%
%e38 #&#
\begin{equation}
\label{L6.4} \mathbf{P}(\tau_y>n)=\varkappa u(y)n^{-p/2}
\bigl(1+o(1)\bigr)\qquad \mbox{as }n\to \infty
\end{equation}
uniformly in $y\in K_{n,\varepsilon}$ such that $|y|\le\theta_n
\sqrt n$ for some $\theta_n\to0$. Moreover, there exists a constant
$C$ such that
%
%e39 #&#
\begin{equation}
\label{L6.5} \mathbf{P}(\tau_y>n)\le C \frac{|y|^p}{n^{p/2}},
\end{equation}
uniformly in $y\in K_{n,\varepsilon},n\ge1$. Finally, for any
compact set $D\subset K$,
%
%e40 #&#
\begin{equation}
\label{L6.6} \mathbf P\bigl(\tau_y>n, y+S(n)\in\sqrt n D\bigr)
\sim\varkappa_0 u(y) n^{-p/2} \int_D
\,dz\, e^{-|z|^2/2} u(z)
\end{equation}
uniformly in $y\in K_{n,\varepsilon}$ such that $|y|\le\theta_n\sqrt
n$ for some $\theta_n\to0$.
\end{lemma}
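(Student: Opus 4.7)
The plan is to couple the random walk with a Brownian motion via Lemma \ref{lem4} and transfer the sharp Brownian asymptotics of Lemma \ref{lem6} through this coupling. Fix $\gamma$ with $\varepsilon<\gamma<\delta/(2(2+\delta))$ (admissible once $\varepsilon$ is chosen sufficiently small), and set $\rho_n:=n^{1/2-\gamma}$. Sakhanenko's coupling gives $\sup_{u\le n}|S([u])-B(u)|<\rho_n$ off an event of probability $O(n^{2\gamma+\gamma\delta-\delta/2})$, which by the lower bound $u(y)\ge Cn^{(1/2-\varepsilon)p}$ coming from Lemma \ref{lem66} is $o(u(y)/n^{p/2})$ for $y\in K_{n,\varepsilon}$ provided $\varepsilon$ is small.

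On the coupling-good event, one sandwiches the random walk exit between Brownian exits from slightly perturbed cones. Fix $\eta:=n^{\varepsilon-\gamma}=o(1)$ and let $K^-\subset K\subset K^+\subset K^{4\varepsilon}$ be cones whose cross-sections $\Sigma^\pm\subset\mathbb{S}^{d-1}$ differ from $\Sigma$ by angular gap $\eta$, calibrated so that for every $z$ with $|z|\ge\rho_n/\eta=n^{1/2-\varepsilon}$ one has $B(z,\rho_n)\subset K$ whenever $z\in K^-$, and $z\in K^+$ whenever $B(z,\rho_n)\cap K\neq\emptyset$. Provided the Brownian path stays at distance $\ge n^{1/2-\varepsilon}$ from the origin throughout $[0,n]$, the $\rho_n$-coupling then yields
\[
\{\tau^{bm,K^-}_y>n\}\cap G\subset\{\tau_y>n\}\cap G\subset\{\tau^{bm,K^+}_y>n\}\cap G,
\]
where $\tau^{bm,K^\pm}_y$ denotes the Brownian exit time from $K^\pm$ and $G$ is the intersection of the coupling-good event with the origin-avoidance event.

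The extendability of $m_1$ to $\widetilde{K}$ carries over to $K^\pm$, and smooth dependence of the principal Dirichlet eigenpair on perturbations of $\Sigma$ yields $u^\pm(y)=u(y)(1+o(1))$ uniformly in $y$, together with $\varkappa^\pm\to\varkappa$ and $\varkappa_0^\pm\to\varkappa_0$ as $\eta\to 0$. Applying (\ref{L6.1}) to $K^+$ gives the uniform upper bound (\ref{L6.5}); applying (\ref{L6.2}) to $K^\pm$ and squeezing yields the sharp asymptotic (\ref{L6.4}); and applying (\ref{L6.3}) integrated against $u(z)\,dz$ over $\sqrt{n}D$ gives the local version (\ref{L6.6}), with the $\rho_n/\sqrt{n}$-perturbation of $D$ contributing negligibly by continuity of $u$ and the Gaussian density.

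The main obstacle is twofold. One must control the probability that the Brownian path approaches the origin within distance $n^{1/2-\varepsilon}$, which is not automatically small uniformly for $y$ near the lower end $|y|\sim n^{1/2-\varepsilon}$, and requires either an additional Markov-property argument at a first hitting time of a comfortable scale, or a direct Brownian estimate exploiting the interplay between hitting small balls around the origin and exiting the cone. One must also rigorously establish the uniform continuity of $(\lambda_1,m_1)$ and of the constants $\varkappa,\varkappa_0$ under angular perturbation of the cone; this is precisely what the extendability of $m_1$ to $\widetilde{K}$ was imposed to guarantee, since it places $K^\pm$ inside a common cone where the Brownian exit theory of \cite{BS97} remains available.
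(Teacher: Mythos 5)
Your overall strategy (Sakhanenko coupling plus the sharp Brownian asymptotics of Lemma \ref{lem6}) is the same as the paper's, but your sandwiching device is different, and as it stands it leaves two genuine gaps that you yourself flag without closing. First, your inclusion $\{\tau^{bm,K^-}_y>n\}\cap G\subset\{\tau_y>n\}\cap G\subset\{\tau^{bm,K^+}_y>n\}\cap G$ is only valid for the portion of the path at distance $\ge \rho_n/\eta=n^{1/2-\varepsilon}$ from the vertex; since the starting points of interest satisfy $|y|\asymp n^{1/2-\varepsilon}$ at the lower end, you must prove that paths which survive in the cone but re-enter the ball of radius $n^{1/2-\varepsilon}$ around the origin contribute $o(u(y)n^{-p/2})$ uniformly in $y\in K_{n,\varepsilon}$ --- a statement of the same order of delicacy as the lemma itself, for which you offer only the names of two possible arguments. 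Second, Lemma \ref{lem6} is an asymptotic statement for a \emph{fixed} cone; you apply it to the $n$-dependent cones $K^\pm$ with angular gap $\eta_n=n^{\varepsilon-\gamma}$. That requires uniform-in-cone control of the series expansion of \cite{BS97} (all eigenvalues $\lambda_j^\pm$, eigenfunctions $m_j^\pm$, and the resulting constants $\varkappa^\pm,\varkappa_0^\pm$), and, crucially, a quantitative rate for the principal eigenvalue, since the exponent itself moves: one needs something like $|p^\pm-p|\log n\to0$, i.e.\ $|p^\pm-p|=O(\eta_n)$, to conclude $n^{-p^\pm/2}\sim n^{-p/2}$. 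None of this perturbation theory is established, and it is not a routine consequence of the extendability hypothesis, which concerns analytic continuation of the single function $m_1$ across $\partial\Sigma$, not continuity of the whole Dirichlet spectrum under domain perturbation.

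The paper avoids both difficulties by perturbing the \emph{starting point} rather than the cone: with $x_0$ the star-like direction ($x_0+K\subset K$, ${\rm dist}(x_0+K,\partial K)>0$) one sets $y^\pm=y\pm x_0n^{1/2-\gamma}$, and on the coupling event $\{\tau^{bm}_{y^-}>n\}\subset\{\tau_y>n\}\subset\{\tau^{bm}_{y^+}>n\}$ holds in the \emph{same} cone $K$ for the whole path, with no restriction near the vertex. Then Lemma \ref{lem6} applies verbatim (same $u$, $p$, $\varkappa$, $\varkappa_0$), and the only continuity input is $u(y^\pm)=u(y)(1+o(1))$, which follows from the gradient bound of Lemma \ref{lem:bounds.u} ($|u(y^\pm)-u(y)|\le C|y|^{p-1}n^{1/2-\gamma}$) together with the lower bound $u(y)\ge Cn^{p(1/2-\varepsilon)}$ on $K_{n,\varepsilon}$ from Lemma \ref{lem66}, choosing $\gamma>p\varepsilon$ and $\varepsilon$ so small that the coupling error $n^{2\gamma+\gamma\delta-\delta/2}$ is also $o(u(y)n^{-p/2})$. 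Unless you supply the missing origin-avoidance estimate and a uniform perturbation theory for the eigenvalue problem on $\Sigma^\pm$, your argument is incomplete; switching to the translation trick removes the need for both.
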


\begin{pf}
For every $y\in K_{n,\varepsilon}$ denote
\[
y^\pm=y\pm R_0x_0 n^{1/2-\gamma},
\]
where $x_0$ is such that $|x_0|=1$, $x_0+K\subset K$ and $R_0$ is such that
$\operatorname{ dist}(R_0x_0+K,\partial K)>1$. Note also that this choice of $R_0$
ensures that $R_0x_0n^{1/2-\gamma}\subset K_{n,\gamma}$.

If we take $\gamma>\varepsilon$, then for any $\varepsilon
'>\varepsilon$
there exists $n(\varepsilon')$ such that $y^{\pm}\in K_{n,\varepsilon
'}$ as soon
as $n\geq n(\varepsilon')$ and $y\in K_{n,\varepsilon}$.

Define
\[
A_n= \Bigl\{\sup_{u\leq n}\bigl|S\bigl([u]\bigr)-B(u)\bigr|\le
n^{1/2-\gamma} \Bigr\}, %
\]
where $B$ is the Brownian motion constructed in Lemma~\ref{lem4}.
The choice of $R_0$ ensures that $\tau^{\mathrm{bm}}_{y^+}>n$ on the set
$\{\tau_y>n\}\cap A_n$.
Then, using (\ref{L4}), we obtain
%
%e41 #&#
\begin{eqnarray}
\label{L5.1}
\mathbf{P}(\tau_y>n)&=&\mathbf{P}(
\tau_y>n,A_n)+o \bigl(n^{-r} \bigr)
\nonumber
\\[-8pt]
\\[-8pt]
\nonumber
&\leq&\mathbf{P}\bigl(\tau^{\mathrm{bm}}_{y^+}>n\bigr)+o
\bigl(n^{-r} \bigr),
\end{eqnarray}
where $r=r(\delta,\gamma)=\delta/2-2\gamma-\gamma\delta$. In the
same way, one can get
%
%e42 #&#
\begin{equation}
\label{L5.2} \mathbf{P}\bigl(\tau^{\mathrm{bm}}_{y^-}>n\bigr)\leq
\mathbf{P}(\tau_{y}>n)+o \bigl(n^{-r} \bigr).
\end{equation}
If $|y|\leq\theta_n\sqrt{n}$, then $|y^\pm|\leq\theta_n\sqrt {n}+R_0x_0n^{1/2-\gamma}=\theta_n'\sqrt{n}$.
Therefore, by Lem\-ma~\ref{lem6},
\[
\mathbf P\bigl(\tau^{\mathrm{bm}}_{y^\pm}>n\bigr)\sim\varkappa u
\bigl(y^\pm\bigr)n^{-p/2}. %
\]
It follows from the Taylor formula and Lemma~\ref{lem:bounds.u} that
%
%e43 #&#
\begin{equation}
\label{lbounda} \bigl|u\bigl(y^\pm\bigr)-u(y)\bigr|\leq C|y|^{p-1}\bigl|y^{\pm}-y\bigr|
\leq Cn^{p/2-\gamma}
\end{equation}
for all $y$ with $|y|\leq\sqrt{n}$. If $K$ is convex, then, according
to the
first part of Lemma~\ref{lem66},
%
%e44 #&#
\begin{equation}
\label{lbound} u(y)n^{-p/2}\ge C \bigl(\operatorname{ dist}(y,\partial K)
\bigr)^p n^{-p/2}\ge Cn^{-p\varepsilon},\qquad y\in
K_{n,\varepsilon}.
\end{equation}
If $K$ is not necessarily\vspace*{1pt} convex but $C^2$, then we may apply the
second part
of Lemma~\ref{lem66}, which gives the same estimate $u(y)\geq
Cn^{p(1/2-\varepsilon)}$.

Combining (\ref{lbounda}) and (\ref{lbound}), we obtain for $\gamma
>p\varepsilon$ an
estimate
\[
u\bigl(y^\pm\bigr)=u(y) \bigl(1+o(1)\bigr),\qquad y\in K_{n,\varepsilon},
|y|\leq\sqrt{n}. %
\]
Therefore, we have
\[
\mathbf{P}\bigl(\tau^{\mathrm{bm}}_{y^\pm}>n\bigr)=\varkappa
u(y)n^{-p/2}\bigl(1+o(1)\bigr). %
\]
{F}rom this relation and bounds (\ref{L5.1}) and (\ref{L5.2}), we
obtain
\[
\mathbf{P}(\tau_{y}>n)=\varkappa u(y)n^{-p/2}\bigl(1+o(1)
\bigr)+o \bigl(n^{-r} \bigr). %
\]
Thus, it remains to show that
%
%e45 #&#
\begin{equation}
\label{L5.3} n^{-r}=o\bigl(u(y)n^{-p/2}\bigr)
\end{equation}
for all sufficiently small $\varepsilon>0$ and all $y\in
K_{n,\varepsilon}$ with $|y|\leq\sqrt{n}$.

Using (\ref{lbound}), we see that (\ref{L5.3}) will be valid for all
$\varepsilon$ satisfying
\[
r=\delta/2-2\gamma-2\gamma\delta>p\varepsilon. %
\]
This proves (\ref{L6.4}). To prove (\ref{L6.5}), it is sufficient to
substitute (\ref{L6.1}) in (\ref{L5.1}).

The proof of (\ref{L6.6}) is similar. Define two sets,
\begin{eqnarray*}
D^+&=&\bigl\{z\in K\dvtx \operatorname{dist}(z, D)\le\bigl(|x_0|+1\bigr)n^{-\gamma}
\bigr\},
\\
D^-&=&\bigl\{z\in D\dvtx \operatorname{dist}(z, \partial D)\ge\bigl(|x_0|+1\bigr)n^{-\gamma}
\bigr\}.
\end{eqnarray*}
Clearly, $D^-\subset D\subset D^+$. Then, arguing as above, we get
%
%e46 #&#
\begin{eqnarray}
\label{L6.7}
&&\mathbf{P}\bigl(\tau_y>n,y+S(n)\in\sqrt n D
\bigr)\nonumber\\
&&\qquad\le \mathbf{P}\bigl(\tau_y>n,y+S(n)\in\sqrt n D,
A_n\bigr)+o \bigl(n^{-r} \bigr)
\nonumber
\\[-8pt]
\\[-8pt]
\nonumber
&&\qquad\le \mathbf{P}\bigl(\tau^{\mathrm{bm}}_{y^+}>n,y^++B(n)\in
\sqrt n D^+, A_n\bigr)+o \bigl(n^{-r} \bigr)
\\
&&\qquad\le\mathbf{P}\bigl(\tau^{\mathrm{bm}}_{y^+}>n,y^++B(n)\in\sqrt n D^+
\bigr)+o \bigl(n^{-r} \bigr).\nonumber
\end{eqnarray}
Similarly,
%
%e47 #&#
\begin{eqnarray}
\label{L6.8}&& \mathbf{P}\bigl(\tau_y>n,y+S(n)\in\sqrt n D\bigr)
\nonumber
\\[-8pt]
\\[-8pt]
\nonumber
&&\qquad\ge
\mathbf{P}\bigl(\tau^{\mathrm{bm}}_{y^-}>n,y^-+B(n)\in\sqrt n D^-
\bigr)+o \bigl(n^{-r} \bigr).
\end{eqnarray}
Now we apply (\ref{L6.3}) and obtain
\begin{eqnarray*}
\mathbf{P}\bigl(\tau^{\mathrm{bm}}_{y^\pm}>n,y^\pm+B(n)\in
\sqrt n D^\pm\bigr)&\sim& \varkappa_0 u
\bigl(y^\pm\bigr) \int_{\sqrt n D^\pm} \,dz
\,e^{-|z|^2/(2n)}u(z) n^{-{d}/{2}}n^{-p}
\\
&=&\varkappa_0 u\bigl(y^\pm\bigr) \int
_{ D^\pm} \,dz\, e^{-|z|^2/2}u(z) n^{-p/2}.
\end{eqnarray*}
It is sufficient to note now that
\[
u\bigl(y^\pm\bigr)\sim u(y)\quad \mbox{and}\quad \int_{ D^\pm}
\,dz \,e^{-|z|^2/2}u(z)\to \int_{ D} \,dz\, e^{-|z|^2/2}u(z)
\]
as $n\to\infty$. From these relations and bounds (\ref{L6.7}) and
(\ref{L6.8}), we obtain
\[
\mathbf{P}\bigl(\tau_y>n,y+S(n)\in\sqrt n D\bigr)= \bigl(
\varkappa_0+o(1)\bigr) u(y) \int_{ D} \,dz\,
e^{-|z|^2/2}u(z) n^{-p/2} +o \bigl(n^{-r} \bigr).
\]
Recalling (\ref{L5.3}), we arrive at the conclusion.
\end{pf}

%
%%%%%%%%%%%%%%%%%%%%%%%%%%%%%%%%%%%%%%%%%%%%%%%%%%%%%%%%%%%%%%%%%%%%%%%%%%%%%%%%%%%%%%%%%%%%%%%%%%%%%%%%%%%%%%%%%%%%%
%%%%%%%%%%%%%%%%%%%%%%%%%%%%%%%%%%%%%%%%%%%%%%%%%%%%%%%%%%%%%%%%%%%%%%%%%%%%%%%%%%%%%%%%%%%%%%%%%%%%%%%%%%%%%%%%%%%%%
%s4 #&#
\section{Asymptotics for \texorpdfstring{$\mathbf{P}(\tau_x>n)$}{$\mathbf{P}(tau_x>n)$}}\label{sect.asymptotics}
We first note that, in view of Lemma~\ref{lem1},
%
%e48 #&#
\begin{eqnarray}
\label{T1.1}
\mathbf{P}(\tau_x>n)&=&\mathbf{P}\bigl(
\tau_x>n,\nu_n\leq n^{1-\varepsilon
}\bigr)+\mathbf{P}
\bigl(\tau_x>n,\nu_n> n^{1-\varepsilon}\bigr)
\nonumber
\\[-8pt]
\\[-8pt]
\nonumber
&=&\mathbf{P}\bigl(\tau_x>n,\nu_n\leq n^{1-\varepsilon}
\bigr)+O \bigl(e^{-Cn^\varepsilon} \bigr).
\end{eqnarray}
Using the strong Markov property, we get the
following estimates for the first term:
%
%e49 #&#
\begin{eqnarray}
\label{T1.2}
&&\int_{K_{n,\varepsilon}}\mathbf{P} \bigl(x+S(
\nu_n)\in dy,\tau_x>\nu_n,\nu_n
\leq n^{1-\varepsilon} \bigr)\mathbf{P}(\tau_y>n) \nonumber\\
&&\qquad\leq\mathbf{P}
\bigl(\tau_x>n,\nu_n\leq n^{1-\varepsilon}\bigr)
\\
&&\qquad\leq\int_{K_{n,\varepsilon}}\mathbf{P} \bigl(x+S(\nu _n)\in
dy,\tau_x>\nu_n,\nu_n\leq n^{1-\varepsilon}
\bigr)\mathbf{P}\bigl(\tau_y>n-n^{1-\varepsilon}\bigr).\nonumber
\end{eqnarray}
Applying now Lemma~\ref{lem5}, we obtain
%
%e50 #&#
\begin{eqnarray}
\label{T1.3}
\nonumber
&&\mathbf{P}\bigl(\tau_x>n;
\nu_n\leq n^{1-\varepsilon}\bigr)
\\
\nonumber
&&\qquad= \frac{\varkappa+o(1)}{n^{p/2}}\mathbf{E} \bigl[u\bigl(x+S(\nu_n)
\bigr);\tau _x>\nu _n,\bigl|x+S(\nu_n)\bigr|\leq
\theta_n\sqrt{n},\nu_n\leq n^{1-\varepsilon
} \bigr]
\\
&&\qquad\quad{}+O \biggl(\frac{1}{n^{p/2}}\mathbf{E} \bigl[\bigl|x+S(\nu
_n)\bigr|^p;\tau_x>\nu_n,\bigl|x+S(
\nu_n)\bigr|> \theta_n\sqrt{n},\nu_n\leq
n^{1-\varepsilon} \bigr] \biggr)
\\
\nonumber
&&\qquad= \frac{\varkappa+o(1)}{n^{p/2}}\mathbf{E} \bigl[u\bigl(x+S(\nu_n)
\bigr);\tau _x>\nu _n,\nu_n\leq
n^{1-\varepsilon} \bigr]
\\
&&\qquad\quad{}+O \biggl(\frac{1}{n^{p/2}}\mathbf{E} \bigl[\bigl|x+S(\nu _n)
\bigr|^p;\tau_x>\nu_n,\bigl|x+S(
\nu_n)\bigr|> \theta_n\sqrt{n},\nu_n\leq
n^{1-\varepsilon} \bigr] \biggr).\nonumber
\end{eqnarray}
We now show that the first expectation converges to $V(x)$ and that
the second expectation is negligibly small.

%le21 #&#
\begin{lemma}\label{lem11}
Under the assumptions of Theorem~\ref{T},
\[
\lim_{n\to\infty}\mathbf{E} \bigl[u\bigl(x+S(\nu_n)
\bigr);\tau_x>\nu_n,\nu _n\leq
n^{1-\varepsilon} \bigr]=V(x). %
\]
\end{lemma}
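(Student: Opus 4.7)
The plan is to apply the optional stopping theorem to the martingale $\{Y_k\}$ from Lemma~\ref{lem-1} at the bounded stopping time $\sigma_n := \nu_n \wedge \tau_x \wedge n^{1-\varepsilon}$. Since $\sigma_n \leq n^{1-\varepsilon}$ and each $Y_k$ is integrable, optional stopping yields
\begin{equation*}
\mathbf{E}\, v(x+S(\sigma_n)) \;=\; v(x) + \mathbf{E}\sum_{k=0}^{\sigma_n - 1} f(x+S(k)).
\end{equation*}

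Next I would split the left-hand side according to which of the three times attains the minimum. The disjoint events $A_n := \{\tau_x \leq \nu_n \wedge n^{1-\varepsilon}\}$, $B_n := \{\tau_x > \nu_n,\, \nu_n \leq n^{1-\varepsilon}\}$, and $C_n := \{\nu_n > n^{1-\varepsilon},\, \tau_x > n^{1-\varepsilon}\}$ partition the sample space. On $B_n$ the walk lies in $K_{n,\varepsilon} \subset K \subset G$ at time $\nu_n$, and on $C_n$ the walk is still in $K$ at time $n^{1-\varepsilon}$; on both, $v$ and $u$ coincide. Hence
\begin{align*}
\mathbf{E}\, v(x+S(\sigma_n))
&= \mathbf{E}[v(x+S(\tau_x));A_n] + \mathbf{E}[u(x+S(\nu_n));B_n] \\
&\quad + \mathbf{E}[u(x+S(n^{1-\varepsilon}));C_n].
\end{align*}
Solving for the target quantity $\mathbf{E}[u(x+S(\nu_n));B_n]$ produces an exact formula in terms of $v(x)$, the correction sum, and the two remainder terms.

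The final step is passing to the limit $n \to \infty$. The key observation is that $\nu_n$ is nondecreasing in $n$ and tends to infinity almost surely: along any fixed sample path the trajectory $\{x+S(k)\}_{k\le T}$ on any finite horizon eventually cannot meet the shrinking region $K_{n,\varepsilon}$ since its defining threshold $n^{1/2-\varepsilon}$ grows without bound. Consequently $\sigma_n \uparrow \tau_x$ almost surely. Then $\mathbf{E}[v(x+S(\tau_x));A_n] \to \mathbf{E}v(x+S(\tau_x))$ by dominated convergence with the integrable majorant $CM^{p-a}(\tau_x)$ from (\ref{nom1}); $\mathbf{E}\sum_{k=0}^{\sigma_n-1} f(x+S(k)) \to \mathbf{E}\sum_{k=0}^{\tau_x-1} f(x+S(k))$ by dominated convergence with the envelope furnished by (\ref{eq:sum.finite}); and $\mathbf{E}[u(x+S(n^{1-\varepsilon}));C_n]\to 0$ directly by Lemma~\ref{lem2}. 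Collecting these three limits reproduces the defining formula for $V(x)$. The only subtle point is securing the integrable dominant for the correction sum, which is precisely the delicate estimate (\ref{eq:sum.finite}) already proved inside Lemma~\ref{lem0} (particularly the dyadic-annulus split in the case $p \leq 2$); once that bound is available, the three terms decouple cleanly and the remainder is routine.
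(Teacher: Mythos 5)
Your proposal is correct and is essentially the paper's own argument: both rest on the martingale $Y$ of Lemma~\ref{lem-1} stopped at the bounded time $\nu_n\wedge\tau_x\wedge n^{1-\varepsilon}$, with the event $\{\nu_n>n^{1-\varepsilon},\tau_x>n^{1-\varepsilon}\}$ killed by Lemma~\ref{lem2} and the remaining terms handled by dominated convergence using the integrability bounds (\ref{nom1}) and (\ref{eq:sum.finite}) from Lemma~\ref{lem0}. The only difference is bookkeeping — you partition by which of the three times attains the minimum and keep the correction sum up to $\sigma_n$, while the paper splits off the $f$-sum on $\{\tau_x>\nu_n\}$ and shows it vanishes — which does not change the substance.
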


\begin{pf}
By the definition of $Y_n$,
\[
u\bigl(x+S(\nu_n)\bigr)=Y_{\nu_n}+\sum
_{k=0}^{\nu_n-1}f\bigl(x+S(k)\bigr). %
\]
Consequently,
\begin{eqnarray*}
&&\mathbf{E} \bigl[u\bigl(x+S(\nu_n)\bigr);\tau_x>
\nu_n,\nu_n\leq n^{1-\varepsilon} \bigr]
\\
&&\qquad=\mathbf{E}
\bigl[Y_{\nu_n};\tau_x>\nu_n,\nu_n
\leq n^{1-\varepsilon
} \bigr]
\\
&&\qquad\quad{}+\mathbf{E} \Biggl[\sum_{k=0}^{\nu_n-1}f
\bigl(x+S(k)\bigr);\tau _x>\nu_n,\nu_n\leq
n^{1-\varepsilon} \Biggr].
\end{eqnarray*}
Recall that it was shown in Lemma~\ref{lem0} that
%
%e51 #&#
\begin{equation}
\label{domconv} \mathbf E\sum_{k=0}^{\tau_x-1}\bigl|f
\bigl(x+S(k)\bigr)\bigr|<\infty.
\end{equation}
Then, since $\nu_n\to\infty$,
\begin{eqnarray*}
\Biggl\llvert \mathbf{E} \Biggl[\sum_{k=0}^{\nu_n-1}f
\bigl(x+S(k)\bigr);\tau_x>\nu _n,\nu _n\leq
n^{1-\varepsilon} \Biggr]\Biggr\rrvert &\le& \mathbf{E} \Biggl[\sum
_{k=0}^{\tau_x-1}\bigl|f\bigl(x+S(k)\bigr)\bigr|;
\tau_x>\nu _n \Biggr]\\
&\to& 0. %
\end{eqnarray*}

Rearranging the terms, we have
%
%e52 #&#
\begin{eqnarray}
\label{T1.4}
\nonumber
\mathbf{E} \bigl[Y_{\nu_n};\tau_x>
\nu_n,\nu_n\leq n^{1-\varepsilon} \bigr] &=&\mathbf{E}
\bigl[Y_{\nu_n\wedge
n^{1-\varepsilon}};\tau_x>\nu_n\wedge
n^{1-\varepsilon},\nu_n\leq n^{1-\varepsilon} \bigr]
\\
&=&
\mathbf{E} \bigl[Y_{\nu_n\wedge
n^{1-\varepsilon}};\tau_x>
\nu_n\wedge n^{1-\varepsilon} \bigr]
\\
&&{}- \mathbf{E} \bigl[Y_{n^{1-\varepsilon}};\tau_x> n^{1-\varepsilon},
\nu_n>n^{1-\varepsilon} \bigr].\nonumber
\end{eqnarray}
Recalling the definition of $Y_n$, we get
\begin{eqnarray*}
&&\mathbf{E} \bigl[Y_{n^{1-\varepsilon}};\tau_x>n^{1-\varepsilon},\nu
_n>n^{1-\varepsilon} \bigr]\\
&&\qquad=\mathbf{E} \bigl[u\bigl(x+S
\bigl(n^{1-\varepsilon}\bigr)\bigr);\tau_x>n^{1-\varepsilon
},\nu
_n> n^{1-\varepsilon} \bigr]
\\
&&\qquad\quad{}-\mathbf{E} \Biggl[\sum_{k=0}^{n^{1-\varepsilon}-1}f
\bigl(x+S(k)\bigr);\tau _x>n^{1-\varepsilon},\nu_n>n^{1-\varepsilon}
\Biggr].
\end{eqnarray*}
The first term goes to zero due to Lemma~\ref{lem2}, the second term
vanishes by (\ref{domconv}) and
by the dominated convergence theorem. Therefore,
%
%e53 #&#
\begin{equation}
\label{T1.5} \mathbf{E} \bigl[Y_{n^{1-\varepsilon}};\tau_x>n^{1-\varepsilon},
\nu _n>n^{1-\varepsilon} \bigr]\to0.
\end{equation}
Further,
\begin{eqnarray*}
\mathbf{E} \bigl[Y_{\nu_n\wedge n^{1-\varepsilon}};\tau_x>\nu _n\wedge
n^{1-\varepsilon} \bigr] & =&\mathbf{E} [Y_{\nu_n\wedge
n^{1-\varepsilon}} ] -\mathbf{E}
\bigl[Y_{\nu_n\wedge n^{1-\varepsilon}};\tau_x\le\nu _n\wedge
n^{1-\varepsilon} \bigr]
\\
& =&\mathbf EY_0 -\mathbf{E} \bigl[Y_{\nu_n\wedge n^{1-\varepsilon}};
\tau_x\le\nu _n\wedge n^{1-\varepsilon} \bigr]
\\
& =&u(x) -\mathbf{E} \bigl[Y_{\tau_x};\tau_x\le
\nu_n\wedge n^{1-\varepsilon
} \bigr],
\end{eqnarray*}
where we have used the martingale property of $Y_{n}$. Noting that
$\nu_n\wedge n^{1-\varepsilon}\to\infty$ almost surely, we have
\[
Y_{\tau_x}\mathbf{1}\bigl\{\tau_x\leq\nu_n
\wedge n^{1-\varepsilon}\bigr\} \to Y_{\tau_x}. %
\]
Then, using the integrability of $Y_{\tau_x}$ [see \eqref{nom1} and
\eqref{eq:sum.finite},
and the dominated
convergence theorem], we obtain
%
%e54 #&#
\begin{equation}
\label{T1.6} \mathbf{E} \bigl[Y_{\tau_x};\tau_x\le
\nu_n\wedge n^{1-\varepsilon
} \bigr] \to \mathbf{E}Y_{\tau_x}
\end{equation}
Combining (\ref{T1.4})--(\ref{T1.6}), we obtain
\[
\mathbf{E} \bigl[u\bigl(x+S(\nu_n)\bigr);\tau_x>
\nu_n,\nu_n\leq n^{1-\varepsilon} \bigr]\to u(x)-
\mathbf{E}Y_{\tau_x}=V(x). %
\]
This proves the lemma.
\end{pf}

In what follows, we will use the Fuk--Nagaev inequalities several times.
For the reader's convenience, we state them in the following lemma.

%le22 #&#
\begin{lemma}\label{FN-ineq}
Let $\xi_i$ be independent identically distributed random variables
with $\mathbf{E}[\xi_1]=0$ and
$\mathbf{E}[\xi_1^2]<\infty$. Then, for all $x,y>0$,
%
%e55 #&#
\begin{equation}
\label{NF1} \mathbf{P} \Biggl(\sum_{i=1}^n
\xi_i\geq x,\max_{i\leq n}\xi_i\leq y
\Biggr)\leq e^{x/y} \biggl(\frac{n\mathbf{E}[\xi^2]}{xy} \biggr)^{x/y}
\end{equation}
and
%
%e56 #&#
\begin{equation}
\label{NF2} \mathbf{P} \Biggl(\sum_{i=1}^n
\xi_i\geq x \Biggr)\leq e^{x/y} \biggl(\frac{n\mathbf{E}[\xi^2]}{xy}
\biggr)^{x/y}+n\mathbf {P}(\xi>y).
\end{equation}
\end{lemma}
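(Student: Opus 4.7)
Both bounds are the classical Fuk--Nagaev inequalities, and the standard route is the exponential Chebyshev method combined with truncation at level $y$. The plan is as follows.

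First I would truncate: set $\tilde\xi_i:=\min(\xi_i,y)$ and $\tilde S_n:=\sum_{i=1}^n\tilde\xi_i$. On the event $\{\max_{i\le n}\xi_i\le y\}$ we have $\tilde\xi_i=\xi_i$ for every $i$, so
\[
\mathbf{P}\Bigl(\sum_{i=1}^n\xi_i\ge x,\,\max_{i\le n}\xi_i\le y\Bigr)\le \mathbf{P}(\tilde S_n\ge x).
\]
For any $\lambda>0$, the exponential Markov inequality gives $\mathbf{P}(\tilde S_n\ge x)\le e^{-\lambda x}\bigl(\mathbf{E}[e^{\lambda\tilde\xi}]\bigr)^n$, so the entire problem reduces to estimating the one-variable moment generating function.

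Second, I would bound $\mathbf{E}[e^{\lambda\tilde\xi}]$. The function $\phi(u)=(e^u-1-u)/u^2$ is increasing on $\mathbb{R}$, and $\lambda\tilde\xi\le\lambda y$, so
\[
e^{\lambda\tilde\xi}\le 1+\lambda\tilde\xi+\lambda^2\tilde\xi^{\,2}\,\phi(\lambda y).
\]
Taking expectations and using $\mathbf{E}[\tilde\xi]\le\mathbf{E}[\xi]=0$ together with $\tilde\xi^{\,2}\le\xi^2$ yields
\[
\mathbf{E}[e^{\lambda\tilde\xi}]\le 1+\lambda^2\mathbf{E}[\xi^2]\,\phi(\lambda y)\le \exp\bigl\{\lambda^2\mathbf{E}[\xi^2]\,\phi(\lambda y)\bigr\}.
\]
Plugging this into the exponential Markov bound I get
\[
\mathbf{P}(\tilde S_n\ge x)\le \exp\Bigl\{-\lambda x+\frac{n\mathbf{E}[\xi^2]}{y^2}\bigl(e^{\lambda y}-1-\lambda y\bigr)\Bigr\}.
\]

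Third, I would optimize in $\lambda$. Writing $s=\lambda y$ and setting $e^{s}=xy/(n\mathbf{E}[\xi^2])$ (which is the natural choice: when this quantity is $\ge 1$ it is the honest minimizer up to lower order terms, and when it is $<1$ the claimed bound is trivial because its right-hand side exceeds $1$), the exponent collapses to
\[
\frac{x}{y}\Bigl(1-\log\frac{xy}{n\mathbf{E}[\xi^2]}\Bigr)+\text{(non-positive remainder)},
\]
which is exactly $\log\bigl(e^{x/y}(n\mathbf{E}[\xi^2]/(xy))^{x/y}\bigr)$. This proves (\ref{NF1}). The bookkeeping in this step is the only place where real care is needed: one has to verify that the remainder terms $-n\mathbf{E}[\xi^2]/y^2(1+s)e^{-s}$ produced by the choice $e^s=xy/(n\mathbf{E}[\xi^2])$ are non-positive, and separately dispatch the regime $xy\le n\mathbf{E}[\xi^2]$ where the stated bound is $\ge 1$ and therefore trivial. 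This is the one real obstacle; everything else is mechanical.

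Finally, (\ref{NF2}) follows from a simple union-bound decomposition:
\[
\mathbf{P}\Bigl(\sum_{i=1}^n\xi_i\ge x\Bigr)\le \mathbf{P}\Bigl(\sum_{i=1}^n\xi_i\ge x,\,\max_{i\le n}\xi_i\le y\Bigr)+\mathbf{P}\bigl(\max_{i\le n}\xi_i>y\bigr),
\]
where the first term is bounded by (\ref{NF1}) and the second by $n\mathbf{P}(\xi>y)$ via a union bound over the $n$ i.i.d.\ coordinates.
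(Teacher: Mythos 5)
Your proof is correct, but it is worth noting that the paper does not prove this lemma at all: it simply cites Nagaev's 1979 survey (inequality (1.56) of Corollary 1.11) for \eqref{NF2} and the proof of Theorem 4 in the original Fuk--Nagaev paper for \eqref{NF1}, remarking that no proofs are given in the survey. So you have supplied a self-contained argument where the authors deferred to the literature. Your route is the standard Bennett-type one: one-sided truncation $\tilde\xi=\min(\xi,y)$ (which is the right truncation here, since the event in \eqref{NF1} only controls the maxima from above and one-sided truncation preserves $\mathbf{E}\tilde\xi\le 0$), the monotonicity of $(e^u-1-u)/u^2$ to bound the moment generating function by $\exp\{\lambda^2\mathbf{E}[\xi^2]\phi(\lambda y)\}$, and the choice $e^{\lambda y}=xy/(n\mathbf{E}[\xi^2])$, with the complementary regime $xy\le n\mathbf{E}[\xi^2]$ dismissed because the stated bound then exceeds $1$. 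All of this checks out: with $A=n\mathbf{E}[\xi^2]/y^2$ and $s=\lambda y$ the exponent equals $\frac{x}{y}\bigl(1-\log\frac{xy}{n\mathbf{E}[\xi^2]}\bigr)-A(1+s)$, and since $s>0$ the discarded term is indeed non-positive, giving exactly $e^{x/y}\bigl(n\mathbf{E}[\xi^2]/(xy)\bigr)^{x/y}$; your written form of the remainder, $-\frac{n\mathbf{E}[\xi^2]}{y^2}(1+s)e^{-s}$, carries a spurious factor $e^{-s}$ (it should be $-A(1+s)$, equivalently $-\frac{x}{y}(1+s)e^{-s}$), but this is a typo with no bearing on the sign or the conclusion. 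The passage to \eqref{NF2} by splitting on $\{\max_{i\le n}\xi_i\le y\}$ and a union bound is also fine. What your approach buys is transparency and self-containment at the cost of a slightly lossier constant structure than the sharpest forms in Fuk--Nagaev (which truncate the variance as $\mathbf{E}[\xi^2;\xi\le y]$); since the lemma as stated uses the full second moment, your bound $\tilde\xi^{\,2}\le\xi^2$ is exactly what is needed.
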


The second inequality is (1.56) from Corollary~1.11 of \cite{Nag79}.
The first one is not directly stated there, but it can be found in the proof
of Theorem~4 of \cite{FN}. There are no proofs in \cite{Nag79} and we
refer the
interested reader to the original paper \cite{FN}.

%co23 #&#
\begin{corollary}\label{FN-cor}
For all $x,y>0$,
%
%e57 #&#
\begin{equation}
\label{NF3} \mathbf{P} \Bigl(\bigl|S(n)\bigr|>x,\max_{k\leq n}\bigl|X(k)\bigr|\leq
y \Bigr)\leq2d e^{x/\sqrt{d}y} \biggl(\frac{\sqrt{d}n}{xy} \biggr)^{x/\sqrt{d}y}
\end{equation}
and
%
%e58 #&#
\begin{equation}
\label{NF4} \mathbf{P}\bigl(\bigl|S(n)\bigr|>x\bigr)\leq2d e^{x/\sqrt{d}y}
 \biggl(
\frac{\sqrt {d}n}{xy} \biggr)^{x/\sqrt{d}y}+n\mathbf{P}\bigl(\bigl|X(1)\bigr|>y
\bigr).
\end{equation}
\end{corollary}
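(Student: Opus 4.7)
The plan is to reduce the multidimensional inequalities to the one-dimensional ones in Lemma~\ref{FN-ineq} by decomposing along coordinate axes. The key elementary observation is that for any vector $v\in\mathbb{R}^d$ one has $|v|\le\sum_{j=1}^{d}|v_j|$, so that
$$
\{|S(n)|>x\}\subset\bigcup_{j=1}^{d}\bigl\{|S_j(n)|>x/d\bigr\}\subset\bigcup_{j=1}^{d}\bigl(\{S_j(n)>x/d\}\cup\{-S_j(n)>x/d\}\bigr).
$$
On $\{\max_{i\le n}|X(i)|\le y\}$ we automatically have $|X_j(i)|\le y$ for each coordinate~$j$, so the coordinate processes $\{S_j(n)\}$ and $\{-S_j(n)\}$ satisfy the hypotheses of Lemma~\ref{FN-ineq} with the truncation level $y$.

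To prove \eqref{NF3}, I would apply \eqref{NF1} with $\xi_i=\pm X_j(i)$, using the normalisation assumption $\mathbf{E}X_j^{2}=1$, and the parameter substitution $x\mapsto x/d$. For each $j$ and each sign this yields
$$
\mathbf{P}\Bigl(\pm S_j(n)>x/d,\ \max_{i\le n}|X(i)|\le y\Bigr)\le e^{x/dy}\Bigl(\frac{dn}{xy}\Bigr)^{x/dy}.
$$
Summing over the $d$ coordinates and the two signs produces the announced prefactor~$2d$, completing the proof of \eqref{NF3}.

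For the uniform bound \eqref{NF4} I would combine \eqref{NF3} with a straightforward truncation:
$$
\mathbf{P}(|S(n)|>x)\le \mathbf{P}\Bigl(|S(n)|>x,\ \max_{i\le n}|X(i)|\le y\Bigr)+\mathbf{P}\Bigl(\max_{i\le n}|X(i)|>y\Bigr),
$$
and bound the second term by $n\mathbf{P}(|X(1)|>y)$ via the union bound. Inserting \eqref{NF3} into the first term immediately gives the claim.

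There is essentially no conceptual obstacle here: the only care required is (i) using $|v|\le\sum_j|v_j|$ to get the factor $d$ in the right place in the exponent $x/dy$ (using $|v|\le\sqrt d\max_j|v_j|$ would give a weaker bound with $\sqrt d$), and (ii) remembering that the coordinatewise second moment is~$1$ under the normalisation assumption, so that the factor $n\mathbf{E}[\xi^2]/(xy)$ from Lemma~\ref{FN-ineq} becomes exactly $dn/(xy)$ after the substitution $x\mapsto x/d$. Everything else is bookkeeping.
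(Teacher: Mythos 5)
Your proposal is correct and follows essentially the same route as the paper: decompose $\{|S(n)|>x\}$ into the coordinate events $\{\pm S_j(n)>x/d\}$ via $|v|\le\sum_j|v_j|$, apply \eqref{NF1} to each coordinate (with truncation level $y$ and $\mathbf{E}[X_j^2]=1$) to get the prefactor $2d$ in \eqref{NF3}, and obtain \eqref{NF4} by the truncation $\mathbf{P}(\max_{i\le n}|X(i)|>y)\le n\mathbf{P}(|X(1)|>y)$. No gaps.
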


\begin{pf}
It is clear that
\begin{eqnarray*}
\mathbf{P} \Bigl(\bigl|S(n)\bigr|>x,\max_{k\leq n}\bigl|X(k)\bigr|\leq y \Bigr)
&\leq&
\sum_{j=1}^d \mathbf{P}
\biggl(\bigl|S_j(n)\bigr|>\frac{x}{\sqrt d},\max_{k\leq
n}\bigl|X_j(k)\bigr|
\leq y \biggr)
\\
&\leq&\sum_{j=1}^d \mathbf{P}
\biggl(S_j(n)>\frac{x}{\sqrt d},\max_{k\leq
n}X_j(k)
\leq y \biggr)\\
&&{}+ \sum_{j=1}^d \mathbf{P}
\biggl(S_j(n)<-\frac{x}{\sqrt d},\min_{k\leq
n}X_j(k)
\geq-y \biggr).
\end{eqnarray*}
Applying now (\ref{NF1}) to every summand and recalling that $\mathbf
{E}[(X_j(1))^2]=1$,
we get~(\ref{NF3}). The bound (\ref{NF4}) follows from (\ref{NF3}) and
inequality
\[
\mathbf{P} \Bigl(\bigl|S(n)\bigr|>x,\max_{k\leq n}\bigl|X(k)\bigr|> y \Bigr)\leq
\mathbf{P} \Bigl(\max_{k\leq n}\bigl|X(k)\bigr|> y \Bigr)\leq n\mathbf{P}
\bigl(\bigl|X(1)\bigr|>y\bigr). %
\]
\upqed\end{pf}

%le24 #&#
\begin{lemma}\label{lem12}
Under the assumptions of Theorem~\ref{T},
\[
\lim_{n\to\infty}\mathbf{E} \bigl[\bigl|x+S(\nu_n)\bigr|^p;
\tau_x>\nu _n,\bigl|S(\nu_n)\bigr|>
\theta_n\sqrt{n},\nu_n\leq n^{1-\varepsilon} \bigr] =0.
\]
\end{lemma}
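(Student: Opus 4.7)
The strategy is to reduce the bound on $E_n := \mathbf{E}[|x+S(\nu_n)|^p;\tau_x>\nu_n,|S(\nu_n)|>\theta_n\sqrt{n},\nu_n\leq n^{1-\varepsilon}]$ to an estimate on the running maximum $M := \max_{k\le n^{1-\varepsilon}}|S(k)|$, and then to apply the Fuk--Nagaev bounds of Corollary~\ref{FN-cor} combined with a maximal inequality. On the event in question one has $|x+S(\nu_n)| \le |x|+M$ and $|S(\nu_n)|>\theta_n\sqrt n$ forces $M>\theta_n\sqrt n$, so the elementary bound $|x+S(\nu_n)|^p\le 2^p(|x|^p+M^p)$ reduces matters to showing that $|x|^p\mathbf P(M>\theta_n\sqrt n-|x|) + \mathbf E[M^p;M>\theta_n\sqrt n]$ tends to zero. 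The first summand is trivially handled once $\mathbf P(M>c\theta_n\sqrt n)\to 0$, so the core task is $\mathbf E[M^p;M>c\theta_n\sqrt n]\to 0$.

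I use the layer-cake identity $\mathbf E[M^p;M>a] = a^p\mathbf P(M>a) + \int_a^\infty p t^{p-1}\mathbf P(M>t)\,dt$ with $a = c\theta_n\sqrt n$. Applying Corollary~\ref{FN-cor} to $|S(k)|$ for $k\le n^{1-\varepsilon}$ with truncation level $y = t/(2dC_0)$, together with Etemadi's inequality (using the monotonicity in $k$ of the Fuk--Nagaev bound) to pass to the maximum, yields
$$
\mathbf P(M>t) \le C\bigl(n^{1-\varepsilon}/t^2\bigr)^{C_0} + C n^{1-\varepsilon}\mathbf P\bigl(|X|>t/C'\bigr),
$$
with $C_0$ a large constant to be chosen. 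Integrated against $pt^{p-1}$ from $a$ to $\infty$, the first (Gaussian) term contributes $C\theta_n^{p-2C_0} n^{p/2-\varepsilon C_0}$; once $C_0>p/(2\varepsilon)$ and $\theta_n$ decays slower than any negative power of $n$ (say $\theta_n = 1/\log n$), this is $o(1)$. The boundary term $a^p\mathbf P(M>a)$ is handled identically.

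The remaining piece is the ``heavy-tail'' integral, which after swapping orders equals $Cn^{1-\varepsilon}\mathbf E[|X|^p;|X|>c'\theta_n\sqrt n]$. When $p\le 2$, the assumption $\mathbf E|X|^\alpha<\infty$ with some $\alpha>2$ and Markov's inequality give $\mathbf E[|X|^p;|X|>R]\le R^{p-\alpha}\mathbf E|X|^\alpha$, producing a polynomial decay that beats the prefactor $n^{1-\varepsilon}$. When $p>2$, only $\mathbf E|X|^p<\infty$ is assumed; uniform integrability of $|X|^p$ still forces $\mathbf E[|X|^p;|X|>R]\to 0$ as $R\to\infty$, but without an a priori rate. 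The main obstacle is precisely this: balancing the prefactor $n^{1-\varepsilon}$ against a tail expectation with no universal rate of decay. The resolution is to exploit the freedom left in Lemma~\ref{lem5}, where $\theta_n\to 0$ is not prescribed quantitatively, and choose $\theta_n$ (distribution-dependently) decaying slowly enough that $n^{1-\varepsilon}\mathbf E[|X|^p;|X|>c'\theta_n\sqrt n]\to 0$ while simultaneously satisfying the polynomial-slowness needed in the Gaussian step.
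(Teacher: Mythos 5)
Your opening reduction discards the event $\{\tau_x>\nu_n\}$, and that is exactly the factor that makes the lemma true; after the reduction you must show $n^{1-\varepsilon}\,\mathbf{E}\bigl[|X|^p;|X|>c'\theta_n\sqrt n\bigr]\to0$, and this fails under the paper's moment assumptions. For $p>2$ only $\mathbf{E}|X|^p<\infty$ is assumed: take $\mathbf{P}(|X|>t)\sim t^{-p}(\log t)^{-2}$, so that $\mathbf{E}[|X|^p;|X|>R]$ decays like $1/\log R$; then for \emph{every} admissible $\theta_n\le1$ the term is at least of order $n^{1-\varepsilon}/\log n\to\infty$, so the distribution-dependent slow choice of $\theta_n$ you invoke at the end cannot exist (the required decay would force $\theta_n\sqrt n\geq e^{cn^{1-\varepsilon}}$). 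The case $p\le2$ is also not saved by Markov's inequality: $n^{1-\varepsilon}(\theta_n\sqrt n)^{p-2-\delta}=\theta_n^{p-2-\delta}\,n^{p/2-\varepsilon-\delta/2}$, which diverges for instance when $p=2$ and $\delta,\varepsilon$ are small (and $\theta_n^{p-2-\delta}\to\infty$ only makes it worse), so the claim that this "beats the prefactor $n^{1-\varepsilon}$" is wrong unless $\alpha$ exceeds roughly $p+2$. Moreover the quantity you reduced to is genuinely of this order (one big jump among the first $n^{1-\varepsilon}$ steps), so no sharpening of the Fuk--Nagaev/maximal-inequality step can repair the argument: the statement obtained after dropping $\{\tau_x>\nu_n\}$ is simply false under the stated hypotheses.

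The missing idea, and the heart of the paper's proof, is to keep the survival constraint and decompose at $\mu_n$, the first time an increment exceeds $n^{1/2-\varepsilon/4}$ (with the explicit choice $\theta_n=n^{-\varepsilon/8}$). On $\{\mu_n>\nu_n\}$ the Fuk--Nagaev bound gives a stretched-exponential estimate, which matches the "Gaussian" half of your argument and is fine. On $\{\mu_n=j\le\nu_n\}$ one writes $|x+S(\nu_n)|^p\le C\bigl(|x+S(j-1)|^p+|X(j)|^p+|S(\nu_n)-S(j)|^p\bigr)$ and, crucially, retains the event $\{\tau_x>j-1\}$, which is independent of $X(j)$. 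This replaces the naive count $n^{1-\varepsilon}$ of possible big-jump epochs by $\sum_{j\le n^{1-\varepsilon}}\mathbf{P}(\tau_x>j)$, which is $O(1)$ when $p>2$ (since $\mathbf{E}\tau_x<\infty$ by Lemma~\ref{moments}) and $O(n^{1-p/2+\beta})$ when $p\le2$; only with this gain do the factors $\mathbf{P}(|X|>n^{1/2-\varepsilon/4})$, respectively $\mathbf{E}[|X|^p;|X|>n^{1/2-\varepsilon/4}]$, suffice — in particular no rate of decay of the truncated $p$-th moment is needed for $p>2$. In short, the cone constraint up to the big jump, quantified through the moments of $\tau_x$, is indispensable, and any proof that removes it at the outset cannot succeed.
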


\begin{pf}
We take $\theta_n=n^{-\varepsilon/8}$. Let
\[
\mu_n:=\min\bigl\{j\ge1\dvtx \bigl|X(j)\bigr|>n^{1/2-\varepsilon/4}\bigr\}.
\]
Since $|S(\nu_n)|\le n^{3/2}$ on the event $\{\mu_n>\nu_n,\nu_n\leq
n^{1-\varepsilon}\}$, we arrive
at the following bound:
\begin{eqnarray*}
&& \mathbf{E} \bigl[\bigl|x+S(\nu_n)\bigr|^p;\tau_x>
\nu_n,\bigl|S(\nu_n)\bigr|> \theta _n\sqrt {n},
\nu_n\leq n^{1-\varepsilon},\mu_n>\nu_n \bigr]
\\
&&\qquad\le Cn^{p(3/2)}\mathbf P\bigl(\bigl|S(\nu_n)\bigr|>
\theta_n\sqrt {n},\nu _n\leq n^{1-\varepsilon},
\mu_n>\nu_n\bigr)
\\
&&\qquad\le Cn^{p(3/2)}\sum_{j=1}^{n^{1-\varepsilon}}
\mathbf P\bigl(\bigl|S(j)\bigr|> \theta_n\sqrt{n},\mu_n>j\bigr).
\end{eqnarray*}
Applying now (\ref{NF3}) with $x=\theta_n\sqrt{n}=n^{1/2-\varepsilon
/8}$, $y=n^{1/2-\varepsilon/4}$ to every probability term, we get
\begin{eqnarray*}
\sum_{j=1}^{n^{1-\varepsilon}}\mathbf P\bigl(\bigl|S(j)\bigr|>
\theta_n\sqrt {n},\mu_n>j\bigr) \leq2d \sum
_{j=1}^{n^{1-\varepsilon}} \biggl(\frac
{(ed)j}{n^{1-3\varepsilon/8}}
\biggr)^{n^{\varepsilon/8}/\sqrt{d}} \leq\exp\bigl\{-Cn^{\varepsilon/8}\bigr\}.
\end{eqnarray*}

As a result,
%
%e59 #&#
\begin{equation}
\label{T1.8} \mathbf{E} \bigl[\bigl|x+S(\nu_n)\bigr|^p;
\tau_x>\nu_n,\bigl|S(\nu_n)\bigr|> \theta
_n\sqrt {n},\nu_n\leq n^{1-\varepsilon},\mu_n>
\nu_n \bigr]\to0.
\end{equation}
Next,
\begin{eqnarray*}
&&\mathbf{E} \bigl[\bigl|x+S(\nu_n)\bigr|^p;\tau_x>
\nu_n,\bigl|S(\nu_n)\bigr|> \theta _n\sqrt {n},
\nu_n\leq n^{1-\varepsilon},\mu_n\le\nu_n
\bigr]
\\
&&\qquad\le \mathbf{E} \bigl[\bigl|x+S(\nu_n)\bigr|^p;
\tau_x>\mu_n,\nu_n\leq n^{1-\varepsilon
},
\mu_n\le\nu_n \bigr]
\\
&&\qquad\le \sum_{j=1}^{n^{1-\varepsilon}}\mathbf{E}
\bigl[\bigl|x+S(\nu_n)\bigr|^p;\tau _x>j,
\nu_n\leq n^{1-\varepsilon},j\le\nu_n,\mu_n=j
\bigr].
\end{eqnarray*}
For each $j$, we split the sum $S(\nu_n)$ in 3 parts
\[
\bigl|x+S(\nu_n)\bigr|^p\le C\bigl(\bigl|x+S(j-1)\bigr|^p+\bigl|X(j)\bigr|^p+\bigl|S(
\nu_n)-S(j)\bigr|^p\bigr). %
\]
Then
\begin{eqnarray*}
&& \sum_{j=1}^{n^{1-\varepsilon}}\mathbf{E} \bigl[\bigl|S(\nu
_n)-S(j)\bigr|^p;\tau _x>j,\nu_n\leq
n^{1-\varepsilon},j\le\nu_n,\mu_n=j \bigr]
\\
&&\qquad\le \sum_{j=1}^{n^{1-\varepsilon}}\mathbf{E}\bigl|M
\bigl(n^{1-\varepsilon
}\bigr)\bigr|^p\mathbf P(\tau_x>j-1,
\mu_n=j)
\\
&&\qquad\le C\sum_{j=1}^{n^{1-\varepsilon}}n^{(1-\varepsilon)p/2}
\mathbf P(\tau_x>j-1)\mathbf P\bigl(\bigl|X(j)\bigr|>n^{1/2-\varepsilon/4}\bigr)
\\
&&\qquad= C n^{(1-\varepsilon)p/2}\mathbf P\bigl(\bigl|X(1)\bigr|>n^{1/2-\varepsilon/4}\bigr) \sum
_{j=1}^{n^{1-\varepsilon}}\mathbf P(\tau_x>j-1).
\end{eqnarray*}
The bound $\mathbf{E}|M(n^{1-\varepsilon})|^p\le C n^{(1-\varepsilon)p/2}$
holds due to the Doob and Rosenthal inequalities for $p\ge2$ and additionally
H\"older's inequality for $p<2$.

There are two cases now. For $p>2$, the sum
\[
\sum_{j=1}^\infty\mathbf P(
\tau_x>j)<\infty, %
\]
since $\mathbf E\tau_x<\infty$. In addition, by the Chebyshev
inequality,
\[
n^{(1-\varepsilon)p/2}\mathbf P\bigl(|X|>n^{1/2-\varepsilon/4}\bigr) \le
n^{(1-\varepsilon)p/2}\frac{\mathbf
E|X|^p}{n^{p(1/2-\varepsilon/4)}}\le n^{-p\varepsilon/4}\mathbf
E|X|^p\to0. %
\]
Next, for $p\le2$, we use the fact that $\mathbf
E|X|^{2+\delta}<\infty$ for some $\delta>0$,
\[
n^{(1-\varepsilon)p/2}\mathbf P\bigl(|X|>n^{1/2-\varepsilon/4}\bigr) \le
n^{(1-\varepsilon)p/2}\frac{\mathbf
E|X|^{2+\delta}}{n^{(2+\delta)(1/2-\varepsilon/4)}}. %
\]
Since $\mathbf E\tau_x^{p/2-\beta}<\infty$, for any $\beta\in(0,p/2)$,
\[
\sum_{j=1}^{n^{1-\varepsilon}}\mathbf P(
\tau_x>j) \le\mathbf E\tau_x^{p/2-\beta}\sum
_{j=1}^n\frac{1}{j^{p/2-\beta}} \le
Cn^{1-p/2+\beta}. %
\]
Then
%
%e60 #&#
\begin{eqnarray}
\label{already_shown}
&&
\sum_{j=1}^{n^{1-\varepsilon}}n^{(1-\varepsilon)p/2}
\mathbf P(\tau_x>j-1)\mathbf P\bigl(|X|>n^{1/2-\varepsilon/4}\bigr) \nonumber\\
&&\qquad\le
C n^{1-p/2+\beta}n^{(1-\varepsilon)p/2}n^{-(2+\delta
)(1/2-\varepsilon/4)}
\\
&&\qquad=Cn^{\beta-\varepsilon(p-1)/2-\delta(1/2-\varepsilon/4)}\to0,\nonumber
\end{eqnarray}
once we pick sufficiently small $\varepsilon>0$ and $\beta>0$.
Therefore, in each case,
%
%e61 #&#
\begin{equation}
\label{eq:sum1} \sum_{j=1}^{n^{1-\varepsilon}}\mathbf{E}
\bigl[\bigl|S(\nu _n)-S(j)\bigr|^p;\tau _x>j,
\nu_n\leq n^{1-\varepsilon},j\le\nu_n,\mu_n=j
\bigr]\to0.
\end{equation}

Next, we analyze
\begin{eqnarray*}
&& \sum_{j=1}^{n^{1-\varepsilon}}\mathbf{E}
\bigl[\bigl|X(j)\bigr|^p;\tau _x>j,\nu _n\leq
n^{1-\varepsilon},j\le\nu_n,\mu_n=j \bigr]
\\
&&\qquad \le \sum_{j=1}^{n^{1-\varepsilon}}\mathbf{E}
\bigl[|X|^p;|X|>n^{1/2-\varepsilon/4} \bigr]\mathbf P(
\tau_x>j-1).
\end{eqnarray*}
As above, there are two cases: $p>2$ and $p\leq2$. If $p>2$, then we apply
\[
\sum_{j=1}^{\infty}\mathbf P(
\tau_x>j)<\infty, \qquad\mathbf{E} \bigl[|X|^p;|X|>n^{1/2-\varepsilon/4}
\bigr]\to0. %
\]
If $p\le2$, then
\begin{eqnarray*}
&&\sum_{j=1}^{n^{1-\varepsilon}}\mathbf{E}
\bigl[|X|^p;|X|>n^{1/2-\varepsilon/4} \bigr]\mathbf P(
\tau_x>j)
\\
&&\qquad\le Cn^{-p/2+\beta+1}\mathbf E|X|^{2+\delta} n^{-(2-p+\delta)(1/2-\varepsilon/4)}
\\
&&\qquad\le C n^{\beta-\delta(1/2-\varepsilon/4)+(2-p)\varepsilon/4}\to0
\end{eqnarray*}
once we pick sufficiently small $\varepsilon>0$ and $\beta>0$.
Therefore,
%
%e62 #&#
\begin{equation}
\label{eq:sum2} \sum_{j=1}^{n^{1-\varepsilon}}\mathbf{E}
\bigl[\bigl|X(j)\bigr|^p;\tau _x>j,\nu _n\leq
n^{1-\varepsilon},j\le\nu_n,\mu_n=j \bigr]\to0.
\end{equation}
Further,
%
%e63 #&#
\begin{eqnarray}
\label{3terms}
\nonumber
&&\sum_{j=1}^{n^{1-\varepsilon}}
\mathbf{E} \bigl[\bigl|x+S(j-1)\bigr|^p;\tau _x>j,
\nu_n\leq n^{1-\varepsilon},j\le\nu_n,\mu_n=j
\bigr]
\\
\nonumber
&&\qquad\le2^p|x|^p\mathbf{P}\bigl(\mu_n
\leq n^{1-\varepsilon}\bigr)\\
&&\qquad\quad{}+ 2^p\sum_{j=1}^{n^{1-\varepsilon}}
\mathbf{E} \bigl[\bigl|S(j-1)\bigr|^p;\tau _x>j,
\mu_n=j \bigr]\nonumber
\\
&&\qquad\le2^p|x|^p\mathbf{P}\bigl(\mu_n
\leq n^{1-\varepsilon}\bigr) \\
&&\qquad\quad{}+2^p\sum_{j=1}^{n^{1-\varepsilon}}
\mathbf{E} \bigl[\bigl|S(j-1)\bigr|^p;\bigl|S(j-1)\bigr|>n^{1/2-\varepsilon/8},
\mu_n=j \bigr]\nonumber
\\
&&\qquad\quad{}+ 2^p\sum_{j=1}^{n^{1-\varepsilon}}
\mathbf{E} \bigl[\bigl|S(j-1)\bigr|^p;\bigl|S(j-1)\bigr|\le n^{1/2-\varepsilon/8},
\tau_x>j,\mu_n=j \bigr].\nonumber
\end{eqnarray}
Using the Chebyshev inequality, we obtain
\[
\mathbf{P}\bigl(\mu_n\leq n^{1-\varepsilon}\bigr)\leq
n^{1-\varepsilon
}\mathbf {P}\bigl(|X|>n^{1/2-\varepsilon/4}\bigr) \leq
dn^{-\varepsilon/2}.
\]
For the second term in (\ref{3terms}) we note that on $\mu_n=j$ the sum
$|S(j-1)|\le n^{3/2}$. Hence,
\begin{eqnarray*}
&&\sum_{j=1}^{n^{1-\varepsilon}}\mathbf{E}
\bigl[\bigl|S(j-1)\bigr|^p;\bigl|S(j-1)\bigr|>n^{1/2-\varepsilon/8},\mu_n=j \bigr]
\\
&&\qquad\le n^{3p/2}\sum_{j=1}^{n^{1-\varepsilon}}
\mathbf {P}\bigl(\bigl|S(j-1)\bigr|>n^{1/2-\varepsilon/8},\mu_n=j\bigr)
\\
&&\qquad\le Cn^{3p/2+1} \exp\bigl\{-Cn^{\varepsilon/8}\bigr\},
\end{eqnarray*}
by the Fuk--Nagaev inequality (\ref{NF3}). The third term,
\begin{eqnarray*}
&&\sum_{j=1}^{n^{1-\varepsilon}}\mathbf{E}
\bigl[\bigl|S(j-1)\bigr|^p;\bigl|S(j-1)\bigr|\le n^{1/2-\varepsilon/8},\tau_x>j,
\mu_n=j \bigr]
\\
&&\qquad\le \sum_{j=1}^{n^{1-\varepsilon}}n^{p(1/2-\varepsilon/8)}
\mathbf {P}(\tau _x>j,\mu_n=j)
\\
&&\qquad\le n^{p(1/2-\varepsilon/8)}\mathbf{P}\bigl(|X|>n^{1/2-\varepsilon/4}\bigr) \sum
_{j=1}^{n^{1-\varepsilon}}\mathbf{P}(\tau_x>j-1)\to 0
\end{eqnarray*}
as has already been shown in \eqref{already_shown}. Hence,
%
%e64 #&#
\begin{equation}
\label{eq:sum3} \sum_{j=1}^{n^{1-\varepsilon}}\mathbf{E}
\bigl[\bigl|x+S(j-1)\bigr|^p;\tau _x>j,\nu _n\leq
n^{1-\varepsilon},j\le\nu_n,\mu_n=j \bigr]\to0.
\end{equation}
Now the claim follows from equations (\ref{T1.8}), (\ref{eq:sum1}),
(\ref{eq:sum2}) and (\ref{eq:sum3}).
\end{pf}

Now we are in position to complete the proof of Theorem~\ref{T}. It
follows from the lemmas and (\ref{T1.1}) and (\ref{T1.3}) that
\[
\mathbf{P}(\tau_x>n)=\frac{\varkappa V(x)}{n^{p/2}}\bigl(1+o(1)\bigr).
\]
%
%%%%%%%%%%%%%%%%%%%%%%%%%%%%%%%%%%%%%%%%%%%%%%%%%%%%%%%%%%%%%%%%%%%%%%%%%%%%%%%%%%%%%%%%%%%%%%%%%%%%%%%%%%%%%%%%%%%%%%
%%%%%%%%%%%%%%%%%%%%%%%%%%%%%%%%%%%%%%%%%%%%%%%%%%%%%%%%%%%%%%%%%%%%%%%%%%%%%%%%%%%%%%%%%%%%%%%%%%%%%%%%%%%%%%%%%%%%%
%s5 #&#
\section{Weak convergence results}\label{sect.weak.convergence}

%le25 #&#
\begin{lemma}
For any $x\in K$, the distribution $\mathbf
P (\frac{x+S(n)}{\sqrt n} \in\cdot| \tau_x>n )$ weakly
converges to the distribution with the density
$H_0e^{-|y|^2/2}u(y)$, where $H_0$ is the normalizing
constant.
\end{lemma}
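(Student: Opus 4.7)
My plan is to mirror the derivation of Theorem~\ref{T} from the preceding lemmas, but carry the endpoint $x+S(n)$ through the computation. Fix a bounded Borel set $D\subset K$ whose boundary has zero Lebesgue measure. The target is the joint asymptotic
$$
\mathbf{P}\bigl(\tau_x>n,\,(x+S(n))/\sqrt n\in D\bigr)\sim \varkappa_0\,V(x)\,n^{-p/2}\int_D u(z)e^{-|z|^2/2}\,dz,
$$
which, combined with Theorem~\ref{T}, forces weak convergence of the conditional law on continuity sets and identifies the normalising constant as $H_0=\varkappa_0/\varkappa$.

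The first step is to split at the entry time $\nu_n$ into $K_{n,\varepsilon}$. By Lemma~\ref{lem1}, the event $\{\nu_n>n^{1-\varepsilon},\tau_x>n^{1-\varepsilon}\}$ carries probability at most $\exp(-Cn^\varepsilon)$, which is negligible compared with $n^{-p/2}$. On the complementary event I would apply the strong Markov property at $\nu_n$ exactly as in \eqref{T1.2}, reducing the problem to integrating
$$
g_n(y):=\mathbf{P}\bigl(\tau_y>n-\nu_n,\,y+S(n-\nu_n)\in \sqrt n\,D\bigr)
$$
against the entrance law of $x+S(\nu_n)$ on $K_{n,\varepsilon}$.

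The analytic heart is then Lemma~\ref{lem5}: on starting points $y\in K_{n,\varepsilon}$ with $|y|\le\theta_n\sqrt n$, the uniform asymptotic \eqref{L6.6} gives
$$
g_n(y)=\varkappa_0\,u(y)\,n^{-p/2}\int_D u(z)e^{-|z|^2/2}\,dz\,(1+o(1)).
$$
Combined with Lemma~\ref{lem11}, which already produces $\mathbf{E}[u(x+S(\nu_n));\tau_x>\nu_n,\nu_n\le n^{1-\varepsilon}]\to V(x)$, this yields the required main term. To justify truncating to $|x+S(\nu_n)|\le\theta_n\sqrt n$ I would invoke the global bound \eqref{L6.5}, namely $g_n(y)\le C|y|^p n^{-p/2}$, and apply Lemma~\ref{lem12}, which shows that the contribution from the complementary region vanishes. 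Dividing by the asymptotic of Theorem~\ref{T} then gives convergence on bounded continuity sets.

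The step I expect to be the main obstacle is tightness — extending from bounded continuity sets to weak convergence on all of $K$. My plan is to rerun the scheme above with $D$ replaced by $\{z\in K:|z|>R\}$: combining \eqref{L6.5}, Lemma~\ref{lem11}, and the strong Markov decomposition, one should obtain
$$
\mathbf{P}\bigl(\tau_x>n,\,|x+S(n)|>R\sqrt n\bigr)\le C\,V(x)\,n^{-p/2}\int_{\{|z|>R\}} u(z)e^{-|z|^2/2}\,dz +o(n^{-p/2}),
$$
whose right-hand side vanishes as $R\to\infty$ because $u(z)e^{-|z|^2/2}$ is integrable on $K$ (note $u(z)=|z|^p m_1(z/|z|)$). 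The delicate point will be that Lemma~\ref{lem5} only gives uniform asymptotics for $|y|\le\theta_n\sqrt n$, so large excursions of $x+S(\nu_n)$ must be absorbed into an error term analogous to Lemma~\ref{lem12} but tailored to the endpoint rather than to $V(x)$; pushing through this uniform-in-$R$ control is the technical core of the argument.
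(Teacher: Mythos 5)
Your proposal follows essentially the same route as the paper's proof: the paper likewise reduces to compact sets $A\subset K$, discards $\{\nu_n>n^{1-\varepsilon}\}$ via Lemma~\ref{lem1}, applies the strong Markov property at $\nu_n$, uses the uniform asymptotics \eqref{L6.6} (together with \eqref{L6.5}) for starting points in $K_{n,\varepsilon}$ with $|y|\le\theta_n\sqrt n$, and concludes with Lemmas~\ref{lem11} and~\ref{lem12} before dividing by the asymptotics of Theorem~\ref{T}. The only divergence is your final tightness step with $D=\{|z|>R\}$, which \eqref{L6.3}/\eqref{L6.6} do not directly cover but which is in fact unnecessary: since the limit measure has total mass one (the constants satisfy $\varkappa=\varkappa_0\int_K u(z)e^{-|z|^2/2}dz$ by integrating the Brownian estimates of Lemma~\ref{lem6}), convergence on compact continuity sets already precludes escape of mass, which is why the paper's reduction to compact $A$ suffices.
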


\begin{pf}
It suffices to show that, for any compact $A\subset K$,
%
%e65 #&#
\begin{eqnarray}
\label{eq.mmm} \frac{\mathbf P(x+S(n)\in\sqrt n A,\tau_x>n)}{\mathbf P(\tau
_x>n)}\to H_0\int_A
e^{-|y|^2/2}u(y)\,dy.
\end{eqnarray}
Take $\theta_n$ which goes to zero slower than any power function.
First note that, as in~(\ref{T1.1}) and (\ref{T1.3}),
\begin{eqnarray*}
&&\mathbf P\bigl(x+S(n)\in\sqrt n A,\tau_x>n\bigr)
\\
&&\qquad=\mathbf{P}\bigl(\tau_x>n,x+S(n)\in\sqrt n A,\nu_n
\leq n^{1-\varepsilon}\bigr)+O \bigl(e^{-Cn^\varepsilon} \bigr)
\\
&&\qquad=\mathbf{P}\bigl(\tau_x>n,x+S(n)\in\sqrt n A, \bigl|S(
\nu_n)\bigr|\le\theta_n\sqrt n,\nu_n\leq
n^{1-\varepsilon}\bigr)+o\bigl(\mathbf P(\tau_x>n)\bigr).
\end{eqnarray*}
In the last line, we used the following estimates which hold by the Markov
property, Lemmas~\ref{lem5} and~\ref{lem12},
\begin{eqnarray*}
&& \mathbf{P}\bigl(\tau_x>n, \bigl|S(\nu_n)\bigr|>
\theta_n\sqrt n,\nu_n\leq n^{1-\varepsilon}\bigr)
\\
&&\qquad\le\frac{C}{n^{p/2}} \mathbf E \bigl[\bigl|x+S(\nu_n)\bigr|^p;
\tau_x>\nu_n,\bigl|S(\nu_n)\bigr|>\theta
_n\sqrt n,\nu_n\leq n^{1-\varepsilon} \bigr]
\\
&&\qquad=o\bigl(n^{-p/2}\bigr)=o\bigl(\mathbf P(\tau_x>n)\bigr).
\end{eqnarray*}

Next,
\begin{eqnarray*}
&& \mathbf{P}\bigl(\tau_x>n,x+S(n)\in\sqrt n A, \bigl|S(
\nu_n)\bigr|\le\theta _n\sqrt n,\nu_n\leq
n^{1-\varepsilon}\bigr)
\\
&&\qquad=\sum_{k=1}^{n^{1-\varepsilon}}\int
_{K_{n,\varepsilon}\cap\{
|y-x|\le
\theta_n \sqrt n\}} \mathbf{P}\bigl(\tau_x>k,x+S(k)\in dy,
\nu_n=k\bigr)
\\
&&\hspace*{97pt}\qquad\quad{} \times\mathbf{P}\bigl(\tau_y>n-k,y+S(n-k)\in\sqrt n A\bigr).
\end{eqnarray*}
Using the coupling and arguing as in Lemma~\ref{lem5}, one can show
that
\[
\mathbf{P}\bigl(\tau_y>n-k,y+S(n-k)\in\sqrt n A\bigr)\sim
\mathbf{P}\bigl(\tau^{\mathrm{bm}}_y>n,y+B(n)\in\sqrt n A\bigr)
\]
uniformly in $k\le n^{1-\varepsilon}$ and $y\in K_{n,\varepsilon}$.
Next, we apply asymptotics (\ref{L6.3}) and obtain that
\[
\mathbf{P}\bigl(\tau_y>n-k,y+S(n-k)\in\sqrt n A\bigr) \sim
\varkappa_0\int_A \,dz\, e^{-|z|^2/2}u(y)u(z)
n^{-p/2} %
\]
uniformly in $y\in K_{n,\varepsilon}, |y|\le\theta_n\sqrt n$. As a
result, we obtain
\begin{eqnarray*}
&& \mathbf P\bigl(x+S(n)\in\sqrt n A,\tau_x>n\bigr)\\
&&\qquad\sim \int
_A \,dz\, e^{-|z|^2/2}u(z) n^{-p/2}
\\
&&\qquad\quad{}\times\varkappa_0\mathbf{E}\bigl[u\bigl(x+S(\nu_n)
\bigr),\tau_x>\nu _n,\bigl|S(\nu_n)\bigr|\le
\theta_n\sqrt n,\nu_n\leq n^{1-\varepsilon}\bigr]
\\
&&\qquad\sim\varkappa_0\int_A \,dz
\, e^{-|z|^2/2}u(z) n^{-p/2} V(x),
\end{eqnarray*}
where the latter equivalence holds due to Lemmas~\ref{lem11} and~\ref{lem12}.
Substituting the latter equivalence in (\ref{eq.mmm}) and using the
asymptotics for $\mathbf P(\tau_x>n)$, we arrive at the conclusion.
\end{pf}

Now we change the notation slightly. Let
\[
\mathbf P_x\bigl(S(n)\in A\bigr)=\mathbf P\bigl(x+S(n)\in A\bigr).
\]

%le26 #&#
\begin{lemma}\label{lem.weak.convergence}
Let $X^n(t)=\frac{S([nt])}{\sqrt n}$ be the family of processes with
the probability measure $\mathbf{\widehat P}^{(V)}_{x\sqrt n},x \in
K$. Then $X^n$ converges weakly in the uniform topology on $D[0,\infty)$
to the Brownian motion conditioned to stay in $K$ with the probability
measure $\mathbf{\widehat P}_x^{(u)}$.
\end{lemma}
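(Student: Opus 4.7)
The plan is to establish weak convergence by verifying (i) convergence of finite-dimensional distributions and (ii) tightness in $C([0,T],\mathbb{R}^d)$ for every $T>0$.

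For (i), fix $0<t_1<\ldots<t_k\le T$ and a bounded continuous $F\colon(\mathbb{R}^d)^k\to\mathbb{R}$. The defining identity of the $V$-transform, which follows from the martingale property of $V(S(n\wedge\tau_x))$ established in Lemma~\ref{positiv}(c), gives
\begin{equation*}
\widehat{\mathbf{P}}^{(V)}_{x\sqrt n}\text{-expectation of }F
=\frac{1}{V(x\sqrt n)}\mathbf{E}\bigl[F(X^n(t_1),\ldots,X^n(t_k))\,V(x\sqrt n+S([nt_k]));\,\tau_{x\sqrt n}>[nt_k]\bigr].
\end{equation*}
By Lemma~\ref{positiv}(a) together with the homogeneity $u(\lambda y)=\lambda^p u(y)$, one has $V(x\sqrt n)\sim n^{p/2}u(x)$. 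The numerator is treated by a multi-time extension of Lemma~\ref{lem5}: using the Sakhanenko coupling of Lemma~\ref{lem4} with $A_n:=\{\sup_{u\le nT}|S([u])-B(u)|\le n^{1/2-\gamma}\}$ and the shifted starting points $y^\pm:=x\sqrt n\pm n^{1/2-\gamma}x_0$, I would sandwich $\{\tau_{x\sqrt n}>[nt_k]\}$ between the corresponding Brownian exit events at $y^\pm$ and exploit the Brownian scaling $B(nt)/\sqrt n\stackrel{d}{=}B(t)$. On the sub-event where $x+X^n(t_k)$ keeps distance at least some fixed $\gamma>0$ from $\partial K$, Lemma~\ref{positiv}(a) yields $V(\sqrt n(x+X^n(t_k)))\sim n^{p/2}u(x+X^n(t_k))$, while the complementary sets on which $X^n(t_k)$ approaches $\partial K$ or becomes atypically large contribute negligibly by the estimates of Lemmas~\ref{lem5} and~\ref{lem12} combined with Lemma~\ref{positiv}(b). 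Dividing by $V(x\sqrt n)$ then yields convergence of the numerator to
\begin{equation*}
\frac{1}{u(x)}\mathbf E\bigl[F(x+B(t_1),\ldots,x+B(t_k))\,u(x+B(t_k));\,\tau^{bm}_x>t_k\bigr],
\end{equation*}
which is precisely $\widehat{\mathbf E}^{(u)}_x[F(B(t_1),\ldots,B(t_k))]$ by definition of the Brownian $u$-transform.

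For (ii), fix $T>0$ and $\delta,\eta>0$, and set $\Omega_\eta:=\{\sup_{|s-t|\le\eta,\,s,t\le T}|X^n(s)-X^n(t)|>\delta\}$. Using the $V$-transform identity together with $V(y)\le C(1+|y|^p)$ from Lemma~\ref{positiv}(b),
\begin{equation*}
\widehat{\mathbf P}^{(V)}_{x\sqrt n}(\Omega_\eta)\le\frac{C}{V(x\sqrt n)}\mathbf E\bigl[(1+|x\sqrt n+S([nT])|^p)\mathbf 1_{\Omega_\eta}\bigr]
\le\frac{C}{u(x)}\mathbf E\bigl[(n^{-p/2}+|x+X^n(T)|^p)\mathbf 1_{\Omega_\eta}\bigr].
\end{equation*}
By Donsker tightness of $\{X^n\}$ under $\mathbf P$, $\mathbf P(\Omega_\eta)\to 0$ as $\eta\to 0$ uniformly in large $n$. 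Combined with a uniform bound on $\mathbf E[|X^n(T)|^{p'}]$ for some $p'>p$ (obtained after truncating the increments at level $n^{1/2-\varepsilon/4}$ as in the proof of Lemma~\ref{lem12} and applying the Rosenthal/Fuk--Nagaev inequalities from Lemma~\ref{FN-ineq}) and H\"older's inequality, this forces $\widehat{\mathbf P}^{(V)}_{x\sqrt n}(\Omega_\eta)\to 0$ as $\eta\to 0$ uniformly in large $n$, establishing tightness on $[0,T]$; letting $T\to\infty$ via a diagonal argument gives tightness in $C(0,\infty)$.

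The main obstacle is the multi-time extension of Lemma~\ref{lem5} needed in step (i). Whereas its one-time version only requires sandwiching $\{\tau_{x\sqrt n}>n\}$ between the Brownian exit events $\{\tau^{bm}_{y^\pm}>n\}$, in the multi-time setting one must simultaneously control the joint behaviour of the random walk and its Brownian approximant at all times $t_1,\ldots,t_k$, and separately handle the contribution of paths that approach $\partial K$ at the terminal time $t_k$, where the weight $u(x+X^n(t_k))$ vanishes and the asymptotic $V\sim u$ from Lemma~\ref{positiv}(a) ceases to be uniform. Once this extension is in place, the remaining ingredients are assembled exactly as in the proof of the preceding single-time lemma.
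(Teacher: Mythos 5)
Your overall scheme (finite-dimensional distributions plus tightness) could in principle be made to work, but both halves, as written, rest on steps that are not available. For the fdd part, the ``multi-time extension of Lemma~\ref{lem5}'' that you acknowledge as the main obstacle is precisely the piece you never supply, and the lemmas you invoke cannot supply it: Lemma~\ref{lem5} is stated only for starting points $y\in K_{n,\varepsilon}$ with $|y|\le\theta_n\sqrt n=o(\sqrt n)$, whereas here the walk starts at $x\sqrt n$ with $|x\sqrt n|$ of order exactly $\sqrt n$; in that regime $\mathbf P(\tau_{x\sqrt n}>[nt_k])$ converges to the constant $\mathbf P(\tau^{bm}_x>t_k)\in(0,1)$ and the factorized asymptotics $\varkappa u(y)n^{-p/2}$ of Lemma~\ref{lem5} are simply false (note $\varkappa u(x)\ne\mathbf P(\tau^{bm}_x>1)$ in general), so the argument cannot be ``assembled exactly as in the proof of the preceding single-time lemma'', which lives in the opposite regime of a fixed starting point. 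Likewise Lemma~\ref{lem12} controls the overshoot $|x+S(\nu_n)|^p$ at the entrance time $\nu_n$ and says nothing about the terminal position $x\sqrt n+S([nt_k])$ lying within $\varepsilon\sqrt n$ of $\partial K$, which is exactly where $V\sim u$ (Lemma~\ref{positiv}(a)) ceases to be uniform; a separate estimate is needed there, and the paper provides it in (\ref{tail2}) via the concentration inequality (\ref{esseen}) and a volume bound. In fact, because the starting point is already at macroscopic distance from $\partial K$, no Sakhanenko coupling is needed at all: the paper rewrites $\widehat{\mathbf E}^{(V)}_{x\sqrt n}f(X^n)$ through the $h$-transform, replaces $V$ by $u$ on the event that the endpoint is not too large and stays at distance $\ge\varepsilon\sqrt n$ from the boundary, and then applies the ordinary Donsker invariance principle to the bounded functional $f(\cdot)\,u(x+\cdot(1))\,{\rm 1}_{D^{bm}\cap\{\tau^{bm}_x>1\}}$, whose discontinuity set is Wiener-null; the cutoffs (\ref{tail1})--(\ref{tail4}) absorb the rest. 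This gives the full functional convergence in one stroke, without fdd's, tightness, or coupling.

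The tightness half contains a step that would fail outright: the claimed uniform bound on $\mathbf E[|X^n(T)|^{p'}]$ for some $p'>p$. For $p>2$ the moment assumption is exactly $\mathbf E|X|^{p}<\infty$, and since $\mathbf E|S(n)|^{p'}<\infty$ forces $\mathbf E|X|^{p'}<\infty$, this $p'$-th moment is infinite for every $n$ whenever $\mathbf E|X|^{p'}=\infty$; truncating at $n^{1/2-\varepsilon/4}$ only helps on the event of no large jump, and on the complementary event you still have to control the weight $V(x\sqrt n+S([nT]))$, which your sketch does not do. The correct substitute, and the one the paper uses, is uniform integrability of the weight extracted from the Fuk--Nagaev inequality: (\ref{FN_moment}) gives $\mathbf E\bigl[|S(n)|^p;|S(n)|>R\sqrt n\bigr]\le C n^{p/2}R^{p-2r}$, i.e.\ (\ref{tail1}), after which one should split $\mathbf E[W{\rm 1}_{\Omega_\eta}]\le\mathbf E[W;|X^n(T)|>R]+CR^p\,\mathbf P(\Omega_\eta)$ instead of applying H\"older with a nonexistent higher moment. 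With that repair, and with a genuine treatment of the near-boundary terminal contribution, your programme could be completed, but it would still be considerably heavier than the paper's direct invariance-principle argument.
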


\begin{pf}
To prove the
claim, we need to show that the convergence takes place in $D[0,l]$
for every $l$. The proof is identical for each $l$, so we let $l=1$ to
simplify notation. Thus, it is sufficient to show that for every
functional $f\dvtx 0\le f\le1$ uniformly continuous on $D[0,1]$ with respect
to the uniform topology,
\[
\mathbf{\widehat E}_{x\sqrt n}^{(V)} f\bigl(X^n\bigr)
\to\mathbf{\widehat E}_{x}^{(u)} f(B) \qquad\mbox{as } n\to\infty.
\]

We first show that
%
%e66 #&#
\begin{equation}
\label{tail1} \frac{1}{V(x\sqrt{n})}\mathbf{E} \bigl[V\bigl(x\sqrt {n}+S(n)
\bigr),\bigl|S(n)\bigr|>R\sqrt {n} \bigr]\leq g(R),
\end{equation}
where $g(R)\to0$ as $R\to\infty$.
Using Lemma~\ref{positiv}(a) and (b), we have, for all $R>1$,
\begin{eqnarray*}
&&\frac{1}{V(x\sqrt{n})}\mathbf{E} \bigl[V\bigl(x\sqrt {n}+S(n)\bigr),\bigl|S(n)\bigr|>R\sqrt
{n} \bigr]
\\
&&\qquad\leq\frac{C}{n^{p/2}} \bigl(n^{p/2}\mathbf {P}\bigl(\bigl|S(n)\bigr|>R\sqrt {n}
\bigr)+\mathbf{E} \bigl[\bigl|S(n)\bigr|^p,\bigl|S(n)\bigl|>R\sqrt{n} \bigr] \bigr)
\\
&&\qquad\leq\frac{C}{n^{p/2}}\mathbf{E} \bigl[\bigl|S(n)\bigr|^p,\bigl|S(n)\bigr|>R\sqrt {n}
\bigr].
\end{eqnarray*}
If $p>2$, then
\begin{eqnarray*}
&&\mathbf{E}\bigl[\bigl|S(n)\bigr|^p;\bigl|S(n)\bigr|> R\sqrt{n}\bigr]
\\
&&\qquad= p\int_{R\sqrt{n}}^\infty z^{p-1}\mathbf{P}
\bigl(\bigl|S(n)\bigr|>z\bigr)\,dz +R^pn^{p/2}\mathbf P\bigl(\bigl|S(n)\bigr|>R
\sqrt{n}\bigr). %
\end{eqnarray*}
Choosing $y=z/r$ in the inequality (\ref{NF4}), we have
\[
\mathbf{P}\bigl(\bigl|S(n)\bigr|>z\bigr)\leq C(r) \biggl(\frac{n}{z^2}
\biggr)^r+n\mathbf{P}\bigl(|X|>z/r\bigr). %
\]
Using the latter bound with $r>p/2$, we have
%
%e67 #&#
\begin{eqnarray}
\label{FN_moment_1}
\mathbf P\bigl(\bigl|S(n)\bigr|>R\sqrt{n}\bigr)&\leq&
C(r)R^{-2r}+n\mathbf{P}\bigl(|X|>R\sqrt {n}/r\bigr)
\nonumber
\\[-8pt]
\\[-8pt]
\nonumber
&\leq& C(r)R^{-2r}+\frac{r^2}{R^2}\mathbf{E}\bigl[|X|^2,|X|>R
\bigr]
\end{eqnarray}
and
%
%e68 #&#
\begin{eqnarray}
\label{FN_moment}
\nonumber
&&\int_{R\sqrt{n}}^\infty
z^{p-1}\mathbf{P}\bigl(\bigl|S(n)\bigr|>z\bigr)\,dz
\\
&&\qquad\leq C(r) p n^r \int_{R\sqrt{n}}^\infty
z^{p-1-2r}\,dz+np\int_{R\sqrt
{n}}^\infty
z^{p-1}\mathbf{P}\bigl(|X|>z/r\bigr)\,dz
\nonumber
\\[-8pt]
\\[-8pt]
\nonumber
&&\qquad\leq C(r) \frac{p}{2r-p}n^{p/2}R^{p-2r}+r^pn
\mathbf {E}\bigl[|X|^{p},|X|>R\sqrt{n}/r\bigr]
\\
&&\qquad\leq C(p,r)n^{p/2} \bigl(R^{p-2r}+\mathbf{E}
\bigl[|X|^{p},|X|>R\bigr] \bigr)\nonumber
\end{eqnarray}
for all sufficiently large $n$. This implies that (\ref{tail1}) holds
for $p>2$.

If $p\leq2$ then, combining the Markov inequality and (\ref
{FN_moment}), we get
for any $r>1+\delta/2$,
\begin{eqnarray*}
\mathbf{E}\bigl[\bigl|S(n)\bigr|^p;\bigl|S(n)\bigr|> R\sqrt{n}\bigr]&\leq&(R
\sqrt{n})^{p-2-\delta
}\mathbf{ E}\bigl[\bigl|S(n)\bigr|^{2+\delta}, \bigl|S(n)\bigr|> R\sqrt{n}
\bigr]
\\
&\leq& C(2+\delta,r)n^{p/2}R^{2+\delta-2r}.
\end{eqnarray*}
Thus, the bound (\ref{tail1}) is valid for all $p$.

Fix also some $\varepsilon>0$.
It follows easily from Lemma~\ref{positiv}(a), (b)
and the central limit theorem that
\begin{eqnarray*}
&&\frac{1}{V(x\sqrt{n})}\mathbf{E} \bigl[V\bigl(x\sqrt{n}+S(n)\bigr),\tau
_{x\sqrt
{n}}>n,\bigl|S(n)\bigr|\leq R\sqrt{n}, \\
&&\hspace*{107pt}{}\operatorname{ dist}\bigl(x\sqrt{n}+S(n),\partial
K\bigr)\leq\varepsilon\sqrt{n} \bigr]
\\
&&\qquad\leq C\mathbf{P} \bigl(\operatorname{ dist}\bigl(x\sqrt {n}+S(n),\partial K\bigr)\leq
\varepsilon\sqrt{n} \bigr)
\\
&&\qquad\leq C \mathbf{P} \bigl(\operatorname{ dist}\bigl(x+B(1),\partial K\bigr)\leq \varepsilon
\bigr).
\end{eqnarray*}
Since the distribution of $B(1)$ is isotropic,
\[
\mathbf{P} \bigl(\operatorname{ dist}\bigl(x+B(1),\partial K\bigr)\leq\varepsilon \bigr)
\leq C\varepsilon. %
\]
Therefore,
%
%e69 #&#
\begin{eqnarray}
\label{tail2}
&&\frac{1}{V(x\sqrt{n})}\mathbf{E} \bigl[V\bigl(x
\sqrt{n}+S(n)\bigr),\tau _{x\sqrt
{n}}>n,\bigl|S(n)\bigr|\leq R\sqrt{n},\nonumber\\
&&\hspace*{108pt}{} \operatorname{ dist}
\bigl(x\sqrt{n}+S(n),\partial K\bigr)\leq\varepsilon\sqrt{n} \bigr]
\\
&&\qquad\leq C\varepsilon.\nonumber
\end{eqnarray}
It is clear that similar bounds are valid for the Brownian motion. More
precisely,
%
%e70 #&#
\begin{equation}
\label{tail3} \frac{1}{u(x)}\mathbf{E} \bigl[u\bigl(x+B(1)\bigr),\bigl|B(1)\bigr|>R
\bigr]\leq g(R)
\end{equation}
and
%
%e71 #&#
\begin{eqnarray}
\label{tail4}
&&\frac{1}{u(x)}\mathbf{E} \bigl[u\bigl(x+B(1)
\bigr),\tau^{\mathrm{bm}}_x>1,\bigl|B(1)\bigr|\leq R,\operatorname{ dist}\bigl(x+B(1),
\partial K\bigr)\leq\varepsilon \bigr]
\nonumber
\\[-8pt]
\\[-8pt]
\nonumber
&&\qquad \leq C\bigl(|x|+R\bigr)^{d-1}\varepsilon.
\end{eqnarray}
Define
\[
D_n:= \bigl\{\bigl|S(n)\bigr|\leq R\sqrt{n}, \operatorname{ dist}\bigl(x\sqrt{n}+S(n),
\partial K\bigr)\geq\varepsilon\sqrt{n} \bigr\} %
\]
and
\[
D^{\mathrm{bm}}:= \bigl\{\bigl|B(1)\bigr|\leq R, \operatorname{ dist}\bigl(x+B(1),\partial K\bigr)
\geq \varepsilon \bigr\}. %
\]
Using Lemma~\ref{positiv}(a), one can easily get
\begin{eqnarray*}
&&\frac{1}{V(x\sqrt{n})}\mathbf{E} \bigl[f\bigl(X^n\bigr)V\bigl(x
\sqrt{n}+S(n)\bigr)\mathrm{ 1}_{D_n},\tau_{x\sqrt{n}}>n \bigr]
\\
&&\qquad=\bigl(1+o(1)\bigr)\frac{1}{u(x\sqrt{n})}\mathbf{E} \bigl[f\bigl(X^n
\bigr)u\bigl(x\sqrt{n}+S(n)\bigr)\mathrm{1}_{D_n},\tau_{x\sqrt{n}}>n
\bigr]
\\
&&\qquad=\bigl(1+o(1)\bigr)\frac{1}{u(x)}\mathbf{E} \biggl[f\bigl(X^n
\bigr)u \biggl(x+\frac
{S(n)}{\sqrt{n}} \biggr)\mathrm{1}_{D_n},
\tau_{x\sqrt{n}}>n \biggr].
\end{eqnarray*}
We next note that $u(x+\cdot)f(\cdot)\mathrm{1}_{D^{\mathrm{bm}}\cap\{\tau
_x^{\mathrm{bm}}>1\}}$ is bounded and its discontinuities
are a null-set with respect to the Wiener measure on $D[0,1]$ equipped
with the Borel $\sigma$-algebra induced
by the uniform topology. Thus, due to the Donsker invariance principle
on $D[0,1]$ with the uniform topology,
\begin{eqnarray*}
&&\lim_{n\to\infty}\frac{1}{V(x\sqrt{n})}\mathbf{E} \bigl[f
\bigl(X^n\bigr)V\bigl(x\sqrt {n}+S(n)\bigr)\mathrm{1}_{D_n},
\tau_{x\sqrt{n}}>n \bigr]
\\
&&\qquad=\frac{1}{u(x)}\mathbf{E} \bigl[f(B)u\bigl(x+B(1)\bigr)\mathrm{1}_{D^{\mathrm{bm}}},\tau _{x}^{\mathrm{bm}}>1 \bigr].
\end{eqnarray*}
For details on the invariance principle on $D[0,1]$ with the uniform
topology and on the Wiener measure
on this space, we refer to Billingsley's book \cite{Billing}, Section~18.

{F}rom this convergence and bounds (\ref{tail1})--(\ref{tail4}), we
conclude that
\[
\limsup_{n\to\infty}\bigl\llvert \mathbf{\widehat
E}_{x\sqrt n}^{(V)} f\bigl(X^n\bigr)- \mathbf{\widehat
E}_{x}^{(u)} f(B)\bigr\rrvert \leq 2g(R)+C\bigl(|x|+R\bigr)^{d-1}
\varepsilon. %
\]
Letting first $\varepsilon\to0$ and then $R\to\infty$, we get
\[
\limsup_{n\to\infty}\bigl\llvert \mathbf{\widehat
E}_{x\sqrt n}^{(V)} f\bigl(X^n\bigr)- \mathbf{\widehat
E}_{x}^{(u)} f(B)\bigr\rrvert =0. %
\]
Thus, the lemma is proved.\vadjust{\goodbreak}
\end{pf}

%%%%%%%%%%%%%%%%%%%%%%%%%%%%%%%%%%%%%%%%%%%%%%%%%%%%%%%%%%%%%%%%%%%%%%%%%%%%%%%%%%%%%%%%%%%%%%%%%%%%%%%%%%%%%%%%%%%%%%%%%%%%%%%%%%%%
%%%%%%%%%%%%%%%%%%%%%%%%%%%%%%%%%%%%%%%%%%%%%%%%%%%%%%%%%%%%%%%%%%%%%%%%%%%%%%%%%%%%%%%%%%%%%%%%%%%%%%%%%%%%%%%%%%%%%%%%%%%%%%%%%%%%
%s6 #&#
\section{Proof of local limit theorems}
%s6.1 #&#
\subsection{Preliminary estimates}

%le27 #&#
\begin{lemma}\label{Loc.L1}
For all $y\in K$ and all $n\geq1$,
%
%e72 #&#
\begin{equation}
\label{Loc.L1.1} \mathbf{P} \bigl(x+S(n)=y,\tau_x>n \bigr) \leq
\frac{C}{n^{d/2}}\mathbf{P}(\tau_x>n/2) \leq C(x)n^{-p/2-d/2}.
\end{equation}
\end{lemma}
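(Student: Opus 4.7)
The plan is to split the path at time $m := \lfloor n/2 \rfloor$ and exploit independence of the two halves, using the Markov property and a Gaussian-type upper bound on one-step transition probabilities.

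First I would write, using that $\{\tau_x>n\}\subset\{\tau_x>m\}$,
\begin{equation*}
\mathbf{P}(x+S(n)=y,\tau_x>n)
\le \mathbf{P}(x+S(n)=y,\tau_x>m).
\end{equation*}
Conditioning on $\mathcal{F}_m$ (or, equivalently, on the value of $x+S(m)$) and using the independence of $S(n)-S(m)$ from $\mathcal{F}_m$, this becomes
\begin{equation*}
\sum_{z}\mathbf{P}(x+S(m)=z,\tau_x>m)\,\mathbf{P}\bigl(S(n)-S(m)=y-z\bigr)
\le\Bigl(\sup_{w}\mathbf{P}(S(n-m)=w)\Bigr)\,\mathbf{P}(\tau_x>m).
\end{equation*}
Under the lattice assumption, the concentration bound of Esseen recalled in (\ref{esseen}) (applied to $n-m\asymp n/2$) yields $\sup_{w}\mathbf{P}(S(n-m)=w)\le C n^{-d/2}$, which gives the first inequality in (\ref{Loc.L1.1}).

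The second inequality is then an immediate consequence of Theorem \ref{T}, which has already been established and implies $\mathbf{P}(\tau_x>m)\le C(x) m^{-p/2}\le C(x) n^{-p/2}$ for $m=\lfloor n/2\rfloor$. Absorbing the constant into $C(x)$ completes the proof.

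I do not anticipate any real obstacle: the argument is the standard decoupling trick that appears throughout local-limit-theorem proofs for conditioned walks. The only point requiring a tiny bit of care is to ensure that the concentration bound (\ref{esseen}) applies to $n-m$ (not just $n$), which is harmless since $n-m\ge n/2$ and the constants are absolute; and to note that for small $n$ (say $n=1$) the bound is trivial after adjusting $C$.
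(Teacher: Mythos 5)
Your proposal is correct and follows essentially the same route as the paper: split at $m=[n/2]$, use the Markov property to factor out the second half, bound the (conditioned) second-half probability by the unconditioned one, apply the Esseen-type concentration bound (\ref{esseen}) to get the $Cn^{-d/2}$ factor, and then invoke the already-proved tail bound $\mathbf{P}(\tau_x>m)\leq C(x)m^{-p/2}$ from Theorem \ref{T}. No issues.
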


\begin{pf}
It follows easily from (\ref{esseen}) that
%
%e73 #&#
\begin{equation}
\label{Loc.L1.2} \mathbf{P}\bigl(S(j)=z\bigr)\leq C j^{-d/2},\qquad z\in
\mathbb{Z}^d.
\end{equation}
Therefore, for $m=[n/2]$ we have
\begin{eqnarray*}
&&\mathbf{P}\bigl(x+S(n)=y,\tau_x>n\bigr)
\\
&&\qquad=\sum_{z\in K}\mathbf{P}\bigl(x+S(m)=z,
\tau_x>m\bigr)\mathbf {P}\bigl(z+S(n-m)=y,\tau_z>n-m
\bigr)
\\
&&\qquad\leq\sum_{z\in K}\mathbf{P}\bigl(x+S(m)=z,
\tau_x>m\bigr)\mathbf {P}\bigl(z+S(n-m)=y\bigr)
\\
&&\qquad\leq Cn^{-d/2}\mathbf{P}(\tau_x>m).
\end{eqnarray*}
But we know that $\mathbf{P}(\tau_x>m)\leq C(x)m^{-p/2}$. This
completes the proof of the lemma.
\end{pf}

Comparing (\ref{Loc.L1.1}) with the claim in Theorem~\ref{Loc.T1}, we
see that (\ref{Loc.L1.1}) has the right order for typical values of
$y$, that is, for $y$ of order $n^{1/2}$. But for smaller values of $y$
that bound is too rough.

%le28 #&#
\begin{lemma}\label{Loc.L2}
For all $x,y\in K$ and all $n\geq1$,
%
%e74 #&#
\begin{equation}
\label{Loc.L2.1} \mathbf{P}\bigl(x+S(n)=y,\tau_x>n\bigr)\leq
C(x,y)n^{-p-d/2}.
\end{equation}
\end{lemma}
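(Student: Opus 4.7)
The plan is to sharpen Lemma~\ref{Loc.L1} by exploiting time-reversal, so that the tail bound of Theorem~\ref{T} is used on both ends of the trajectory rather than only on the initial segment. The single concentration factor $n^{-d/2}$ then comes from a bridge estimate in the middle.

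First, I would write $n=m_1+m_2+m_3$ with $m_1=m_3=[n/3]$ and $m_2=n-m_1-m_3$, each of order $n$. By the Markov property applied at times $m_1$ and $m_1+m_2$,
\begin{align*}
  \mathbf{P}(x+S(n)=y,\tau_x>n)
  &=\sum_{z_1,z_2\in K}\mathbf{P}(x+S(m_1)=z_1,\tau_x>m_1)\\
  &\qquad\times\mathbf{P}(z_1+S(m_2)=z_2,\tau_{z_1}>m_2)\\
  &\qquad\times\mathbf{P}(z_2+S(m_3)=y,\tau_{z_2}>m_3).
\end{align*}
The middle factor is bounded by the unrestricted one-step probability $\mathbf{P}(S(m_2)=z_2-z_1)\le Cm_2^{-d/2}\le Cn^{-d/2}$, using the concentration bound (\ref{esseen}) (or equivalently (\ref{Loc.L1.2})). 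Pulling this uniform factor out of the double sum yields
$$
\mathbf{P}(x+S(n)=y,\tau_x>n)\le Cn^{-d/2}\left(\sum_{z_1}\mathbf{P}(x+S(m_1)=z_1,\tau_x>m_1)\right)\left(\sum_{z_2}\mathbf{P}(z_2+S(m_3)=y,\tau_{z_2}>m_3)\right).
$$

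Second, the first sum collapses to $\mathbf{P}(\tau_x>m_1)$, for which Theorem~\ref{T} (already proved in Section~\ref{sect.asymptotics}) gives $\mathbf{P}(\tau_x>m_1)\le C(x)m_1^{-p/2}\le C(x)n^{-p/2}$. For the second sum I would use time reversal: put $\tilde X_i=-X_{m_3-i+1}$ and $\tilde S(k)=\tilde X_1+\cdots+\tilde X_k$, so that $(\tilde X_i)$ is i.i.d.\ with the law of $-X$ and the reversed trajectory converts the event into
$$
\{z_2+S(m_3)=y,\tau_{z_2}>m_3\}=\{y+\tilde S(m_3)=z_2,\tilde\tau_y>m_3\},
$$
where $\tilde\tau_y$ denotes the exit time from $K$ of $y+\tilde S(\cdot)$. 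Summing over $z_2\in K$ then gives $\mathbf{P}(\tilde\tau_y>m_3)$. Since the random walk $-S(n)$ satisfies the same normalisation and moment assumptions as $S(n)$, and the cone $K$ is unchanged, Theorem~\ref{T} applies to $-S$ as well and yields $\mathbf{P}(\tilde\tau_y>m_3)\le C(y)n^{-p/2}$. Combining the three factors produces exactly $C(x,y)n^{-p-d/2}$.

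The only subtlety, which is not really an obstacle, is the bookkeeping in the time-reversal identification: one has to check that the event $\{\tau_{z_2}>m_3\}$ (i.e.\ $z_2+S(k)\in K$ for $k=1,\ldots,m_3$, with $z_2+S(m_3)=y$) matches the reversed event $\{\tilde\tau_y>m_3\}$ (i.e.\ $y+\tilde S(k)\in K$ for $k=1,\ldots,m_3$, with $y+\tilde S(m_3)=z_2$). This is immediate from the bijection $y+\tilde S(k)=z_2+S(m_3-k)$ and the assumption $y,z_2\in K$. Everything else is a direct application of results already established in the paper.
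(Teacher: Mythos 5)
Your proof is correct and is essentially the paper's own argument: the paper splits the trajectory at $[n/2]$, reverses time in the second half, and applies Lemma~\ref{Loc.L1} to the walk $\{-S(n)\}$, which packages exactly your concentration factor $n^{-d/2}$ together with the tail bound $\mathbf{P}(\tau'_y>\cdot)\le C(y)\,(\cdot)^{-p/2}$, so your explicit three-fold split is just an unfolded version of the same combination of Markov property, the bound (\ref{esseen}), time reversal, and Theorem~\ref{T} applied to both $S$ and $-S$.
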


\begin{pf}
We first split the trajectory $S(1),S(2),\ldots,S(n)$ into two parts
\begin{eqnarray*}
&&\mathbf{P}\bigl(x+S(n)=y,\tau_x>n\bigr)
\\
&&\qquad=\sum_{z\in
K}\mathbf{P}\bigl(x+S(m)=z,
\tau_x>m\bigr)\mathbf{P}\bigl(z+S(n-m)=y,\tau_z>n-m
\bigr),
\end{eqnarray*}
where $m=[n/2]$. Then we reverse the time in the second part:
\begin{eqnarray*}
&&\mathbf{P} \bigl(z+S(n-m)=y,\tau_z>n-m \bigr)
\\
&&\qquad=\mathbf{P} \bigl(z+S(k)\in K, k=1,2,\ldots,n-m-1,z+S(n-m)=y \bigr)
\\
&&\qquad=\mathbf{P} \Biggl(z+S(n-m)-\sum_{j=k+1}^{n-m}
X(j)\in K, k=1,2,\ldots,n-m-1,\\
&&\hspace*{227pt}{}z+S(n-m)=y \Biggr)
\\
&&\qquad=\mathbf{P} \bigl(y-S(k)\in K, k=1,2,\ldots,n-m-1,y-S(n-m)=z \bigr)
\\
&&\qquad=\mathbf{P} \bigl(y-S(n-m)=z,\tau'_y>n-m \bigr),
\end{eqnarray*}
where $\tau'_y=\min\{k\geq1\dvtx y-S(k)\notin K\}$. Applying Lemma~\ref{Loc.L1} to the random walk $\{-S(n)\}$, we obtain
\[
\mathbf{P}\bigl(z+S(n-m)=y,\tau_z>n-m\bigr)\leq
C(y)n^{-p/2-d/2}. %
\]
Consequently,
\begin{eqnarray*}
\mathbf{P}\bigl(x+S(n)=y,\tau_x>n\bigr)&\leq& C(y)n^{-p/2-d/2}
\sum_{z\in
K}\mathbf {P}\bigl(x+S(m)=z,
\tau_x>m\bigr)
\\
&\leq& C(y)n^{-p/2-d/2}C(x)n^{-p/2}.
\end{eqnarray*}
Thus, the proof is finished.
\end{pf}

%le29 #&#
\begin{lemma}\label{Loc.L3}
There exist constants $a$ and $C$ such that, for every $u>0$,
%
%e75 #&#
\begin{equation}
\label{Loc.L3.1} \limsup_{n\to\infty}\sup_{|x-z|\geq u\sqrt{n}}n^{d/2}
\mathbf {P} \bigl(x+S(n)=z \bigr)\leq C\exp\bigl\{-au^2\bigr\}
\end{equation}
and
%
%e76 #&#
\begin{equation}
\label{Loc.L3.2} \limsup_{n\to\infty}\sup_{x,z\in M_{n,u}}n^{d/2}
\mathbf{P} \bigl(x+S(n)=z,\tau_x\leq n \bigr)\leq C\exp\bigl
\{-au^2\bigr\},
\end{equation}
where $M_{n,u}:= \{z\dvtx \operatorname{ dist}(z,\partial K)\geq u\sqrt{n}
\}$.
\end{lemma}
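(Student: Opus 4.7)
The plan is to derive (\ref{Loc.L3.1}) directly from the Gnedenko--Kolmogorov local limit theorem and then bootstrap to (\ref{Loc.L3.2}) by a strong-Markov decomposition at $\tau_x$ combined with time-reversal. For (\ref{Loc.L3.1}) I would invoke the multidimensional local CLT: under our lattice, strong-aperiodicity and second-moment hypotheses, one has
$$
\sup_{w}\bigl|n^{d/2}\mathbf{P}(S(n)=w)-c_0(2\pi)^{-d/2}e^{-|w|^2/(2n)}\bigr|\longrightarrow 0,
$$
where $c_0$ is the covolume of the lattice $R$. For $w=z-x$ with $|w|\ge u\sqrt n$ the Gaussian factor is bounded by $c_0(2\pi)^{-d/2}e^{-u^2/2}$, so taking the $\limsup$ yields (\ref{Loc.L3.1}) with $a=1/2$.

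For (\ref{Loc.L3.2}) I will split
$$
\mathbf P(x+S(n)=z,\tau_x\le n)=P_A+P_B,
$$
according to whether $\tau_x\le n/2$ (piece $P_A$) or $n/2<\tau_x\le n$ (piece $P_B$). The strong Markov property applied at $\tau_x$ rewrites $P_A$ as
$$
\sum_{k=1}^{\lfloor n/2\rfloor}\sum_{y\notin K}\mathbf{P}(\tau_x=k,x+S(k)=y)\,\mathbf{P}(S(n-k)=z-y).
$$
Since $z\in M_{n,u}$ sits at distance at least $u\sqrt n$ from $\partial K$ while $y\notin K$, any segment from $z$ to $y$ crosses $\partial K$, so $|z-y|\ge u\sqrt n$. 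Coupled with $n-k\ge n/2$, this forces $|z-y|/\sqrt{n-k}\ge u$, and (\ref{Loc.L3.1}) applied to the $(n-k)$-step segment yields $\mathbf{P}(S(n-k)=z-y)\le Cn^{-d/2}e^{-au^2}$ uniformly in the admissible $k,y$, once $n$ is large enough. Using $\mathbf{P}(\tau_x\le n/2)\le 1$ and summing, I obtain $n^{d/2}P_A\le Ce^{-au^2}$.

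For $P_B$ I would time-reverse. Setting $\widetilde S(j):=-\sum_{i=1}^{j}X(n-i+1)$, one has $z+\widetilde S(j)=x+S(n-j)$, and $\widetilde S$ is a random walk whose i.i.d.\ increments $-X$ inherit all of our normalisation, moment, lattice and strong-aperiodicity hypotheses. The condition $\tau_x>n/2$ forces the last forward-exit time $\sigma_x:=\max\{k\le n-1:x+S(k)\notin K\}$ to satisfy $\sigma_x\ge\tau_x>n/2$, and from $\widetilde\tau_z=n-\sigma_x$ one reads off $\widetilde\tau_z<n/2$. Consequently $P_B\le\mathbf{P}(\widetilde\tau_z<n/2,\ z+\widetilde S(n)=x)$, and the Case~A argument applied to $\widetilde S$ with $x$ and $z$ interchanged produces $n^{d/2}P_B\le Ce^{-au^2}$. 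The hard part will be the time-reversal identification in Case~B, since the first-exit time of the forward walk corresponds to the \emph{last}-exit time of the reversed walk rather than to its first exit; once the chain $\widetilde\tau_z\le n-\sigma_x<n/2$ is laid out cleanly, the remainder is bookkeeping built on (\ref{Loc.L3.1}) and on the fact that $-X$ satisfies all our standing hypotheses.
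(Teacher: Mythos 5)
Your argument is correct, and it reaches the two bounds by a partly different route than the paper. The skeleton is shared: you split according to whether $\tau_x\le n/2$ or $n/2<\tau_x\le n$ and treat the late-exit piece by time reversal, exactly as the paper does, and your identification $\widetilde\tau_z=n-\sigma_x<n/2$ is legitimate because $x+S(n)=z\in K$ rules out $\tau_x=n$, so $\sigma_x\ge\tau_x$ is well defined. The difference lies in how each piece is estimated. The paper never invokes the full local CLT here: for (\ref{Loc.L3.1}) it rewrites the event as $\{\max_{k\le n}|S(k)|\ge u\sqrt n\}$, splits the maximum over the two halves, extracts the factor $Cn^{-d/2}$ from the concentration bound (\ref{Loc.L1.2}) via the Markov property at time $n/2$, and gets $e^{-au^2}$ from the CLT for the maximum; for (\ref{Loc.L3.2}) it notes that ${\rm dist}(x,\partial K)\ge u\sqrt n$ (resp.\ ${\rm dist}(z,\partial K)\ge u\sqrt n$) forces the same large-maximum event on the first (resp.\ time-reversed second) half. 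You instead take (\ref{Loc.L3.1}) directly from Gnedenko's local limit theorem (available under the standing lattice, strong aperiodicity and second-moment assumptions; the paper itself uses Spitzer's Proposition 7.9 later), which yields the explicit $a=1/2$, and then obtain (\ref{Loc.L3.2}) via the strong Markov property at $\tau_x$, exploiting that the exit point $y\notin K$ satisfies $|z-y|\ge u\sqrt n$ (resp.\ $|x-y|\ge u\sqrt n$ after reversal); note that your first piece uses only $z\in M_{n,u}$ and the second only $x\in M_{n,u}$, mirroring the paper's use of the two hypotheses. Two points you should make explicit when writing this up: since (\ref{Loc.L3.1}) is a limsup statement, in the sums defining $P_A$ and $P_B$ you must quote the \emph{uniform} form of the local CLT (which you in fact stated), so that the bound holds simultaneously for all $m=n-k\ge n/2$ with the $o(1)$ error absorbed in the final limsup; and $M_{n,u}$ has to be read as a subset of $K$ (your segment-crossing argument uses $z\in K$), which is also the paper's intended reading, since otherwise the claim fails for points deep inside $K^c$. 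Neither point is a gap, and both approaches deliver the lemma; yours trades the paper's elementary concentration-function argument for the stronger local CLT input, in exchange for a shorter and more transparent proof with an explicit constant $a$.
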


\begin{pf}
Put again $m=[n/2]$.
For $x$ and $z$ with $|x-z|\geq u\sqrt{n}$, we have
\begin{eqnarray*}
\mathbf{P} \bigl(x+S(n)=z \bigr)&\leq &\mathbf{P} \bigl(x+S(n)=z,\bigl|S(m)\bigr|\geq u
\sqrt{n}/2 \bigr)
\\
&&{}+\mathbf{P} \bigl(x+S(n)=z,\bigl|S(n)-S(m)\bigr|\geq u\sqrt{n}/2 \bigr).
\end{eqnarray*}

We first note that from the Markov property and (\ref{Loc.L1.2}) follows
\[
\mathbf{P} \bigl(x+S(n)=z,\bigl|S(m)\bigr|\geq u\sqrt{n}/2 \bigr)\leq Cn^{-d/2}
\mathbf{P} \bigl(\bigl|S(m\bigr)\bigr|\geq u\sqrt{n}/2 \bigr). %
\]
Reversing the time, as it was done in the previous lemma, we infer
that
\[
\mathbf{P} \bigl(x+S(n)=z,\bigl|S(n)-S(m)\bigr|\geq u\sqrt{n}/2 \bigr)\leq
Cn^{-d/2} \mathbf{P} \bigl(\bigl|S(n-m)\bigr|\geq u\sqrt{n}/2 \bigr). %
\]
As a result we have
\begin{eqnarray*}
&\mathbf{P} \bigl(x+S(n)=z \bigr)\leq Cn^{-d/2} \bigl(\mathbf{P}
\bigl(\bigl|S(m)\bigr|\geq u\sqrt{n}/2 \bigr) +\mathbf{P} \bigl(\bigl|S(n-m)\bigr|\geq u\sqrt{n}/2
\bigr) \bigr).
\end{eqnarray*}
The first estimate in the lemma follows now from the central limit theorem.

To prove the second estimate we note that if $\operatorname{ dist}(z,\partial
K)\geq u\sqrt{n}$ then,
using the Markov property, we obtain
\[
\mathbf{P} \bigl(x+S(n)=z,\tau_x\leq n/2 \bigr)\leq \max
_{n/2\leq k\leq n}\sup_{|y-z|\geq u\sqrt{n}}n^{d/2}\mathbf {P}
\bigl(y+S(k)=z \bigr). %
\]
Furthermore, if $\operatorname{ dist}(x,\partial K)\geq u\sqrt{n}$ then,
reversing additionally the time,
we get
\[
\mathbf{P} \bigl(x+S(n)=z,n/2<\tau_x\leq n \bigr)\leq \max
_{n/2\leq k\leq n}\sup_{|y-x|\geq u\sqrt{n}}n^{d/2}\mathbf {P}
\bigl(y+S(k)=x \bigr). %
\]
Applying (\ref{Loc.L3.1}), we complete the proof.
\end{pf}

%%%%%%%%%%%%%%%%%%%%%%%%%%%%%%%%%%%%%%%%%%%%%%%%%%%%%%%%%%%%%%%%%%%%%%%%%%%%%%%%%%%%%%%%%%%%%%%%%%%%%%%%%%%%%%%%%%%%%%%%
%s6.2 #&#
\subsection{Proof of Theorem \texorpdfstring{\protect\ref{Loc.T1}}{5}}
For simplicity we assume that $X$ takes values on $\mathbb{Z}^d$.

We split the cone into three parts:
\begin{eqnarray*}
K^{(1)}&:=&\bigl\{y\in K\dvtx |y|>A\sqrt{n}\bigr\},
\\
K^{(2)}&:=&\bigl\{y\in K\dvtx |y|\leq A\sqrt{n},\operatorname{ dist}(y,\partial
K)\leq 2\varepsilon\sqrt{n}\bigr\},
\\
K^{(3)}&:=&\bigl\{y\in K\dvtx |y|\leq A\sqrt{n},\operatorname{ dist}(y,\partial
K)> 2\varepsilon\sqrt{n}\bigr\}
\end{eqnarray*}
with some $A>0$ and $\varepsilon>0$. Noting that
\[
\lim_{A\to\infty}\sup_{y\in K^{(1)}}u(y/\sqrt{n})
e^{-|y|^2/2n}=0 %
\]
and
\[
\lim_{A\to\infty}\lim_{\varepsilon\to0}\sup
_{y\in
K^{(2)}}u(y/\sqrt{n}) e^{-|y|^2/2n}=0, %
\]
one can easily see that the theorem will be proved if we show that
\begin{eqnarray*}
\lim_{A\to\infty}\limsup_{n\to\infty}n^{p/2+d/2}
\sup_{y\in
K^{(1)}}\mathbf{P}\bigl(x+S(n)=y,\tau_x>n
\bigr)&=&0, %
\\
\lim_{A\to\infty}\lim_{\varepsilon\to0}\limsup
_{n\to\infty
}n^{p/2+d/2}\sup_{y\in K^{(2)}}\mathbf{P}
\bigl(x+S(n)=y,\tau_x>n\bigr)&=&0 %
\end{eqnarray*}
and
\begin{eqnarray*}
&&\lim_{\varepsilon\to0}\limsup_{n\to\infty}\sup
_{y\in K^{(3)}} \biggl|n^{p/2+d/2}\mathbf{P} \bigl(x+S(n)=y,
\tau_x>n \bigr)
\\
&&\hspace*{96pt}{}-\varkappa V(x)H_0u \biggl(\frac{y}{\sqrt{n}}
\biggr)e^{-|y|^2/2n} \biggr|=0.
\end{eqnarray*}
This is done in (\ref{Loc.0}), (\ref{Loc.00}) and (\ref{Loc.6}), respectively.

We have
\begin{eqnarray*}
\mathbf{P}\bigl(x+S(n)=y,\tau_x>n\bigr)&=& \mathbf{P}\bigl(x+S(n)=y,
\tau_x>n,\bigl|S(n/2)\bigr|\leq A\sqrt{n}/2\bigr)
\\
&&{}+\mathbf{P}\bigl(x+S(n)=y,\tau_x>n,\bigl|S(n/2)\bigr|> A\sqrt{n}/2\bigr).
\end{eqnarray*}
Using the Markov property and (\ref{Loc.L1.2}), we get, for all $y\in K^{(1)}$,
\begin{eqnarray*}
&&\mathbf{P}\bigl(x+S(n)=y,\tau_x>n,\bigl|S(n/2)\bigr|> A\sqrt{n}/2\bigr)
\\
&&\qquad\leq C(x)n^{-d/2-p/2}\mathbf{P} \bigl(\bigl|x+S(n/2)\bigr|>A\sqrt
{n}/2-|x||
\tau_x>n/2 \bigr).
\end{eqnarray*}
Applying now (\ref{eq.cond.dist}) in Theorem~\ref{T2}, we obtain,
uniformly in $y\in K^{(1)}$,
\begin{eqnarray*}
&&\lim_{A\to\infty}\limsup_{n\to\infty} n^{p/2+d/2}
\mathbf {P}\bigl(x+S(n)=y,\tau_x>n,\bigl|S(n/2)\bigr|> A\sqrt{n}/2\bigr)
\\
&&\qquad\leq C(x)\lim_{A\to\infty} \mu\bigl(\bigl\{z\in K\dvtx |z|>A/\sqrt {2}\bigr\}
\bigr)=0.
\end{eqnarray*}
Furthermore, applying Theorem~\ref{T} and (\ref{Loc.L3.1}), we get, for
$|y|>A\sqrt{n}$,
\begin{eqnarray*}
&&\mathbf{P}\bigl(x+S(n)=y,\tau_x>n,\bigl|S(n/2)\bigr|\leq A\sqrt{n}/2\bigr)
\\
&&\qquad\leq \mathbf{P}(\tau_x>n/2)\sup_{|y-z|>A\sqrt{n}/2}\mathbf
{P}\bigl(x+z+S(n/2)=y\bigr)
\\
&&\qquad\leq C(x)n^{-d/2-p/2}\exp\bigl\{-aA^2/4\bigr\}.
\end{eqnarray*}
As a result, we have
%
%e77 #&#
\begin{equation}
\label{Loc.0} \lim_{A\to\infty}\limsup_{n\to\infty}n^{d/2+p/2}
\sup_{y\in
K^{(1)}}\mathbf{P}\bigl(x+S(n)=y,\tau_x>n
\bigr)=0.
\end{equation}

We next consider $y\in K^{(2)}$.
Set $m=[n/2]$. Using the time reversion from Lemma~\ref{Loc.L2} and the
bound (\ref{Loc.L1.1}), we obtain
\begin{eqnarray*}
&&\mathbf{P}\bigl(x+S(n)=y,\tau_x>n\bigr)
\\
&&\qquad=\sum_{z\in K}\mathbf{P}\bigl(x+S(m)=z,
\tau_x>m\bigr)\mathbf {P}\bigl(y-S(n-m)=z,\tau '_y>n-m
\bigr)
\\
&&\qquad\leq C(x)m^{-p/2-d/2}\sum_{z\in K}\mathbf{P}
\bigl(y-S(n-m)=z,\tau '_y>n-m\bigr)
\\
&&\qquad\leq C(x)n^{-p/2-d/2}\mathbf{P}\bigl(\tau'_y>n-m
\bigr).
\end{eqnarray*}
We want to show that
%
%e78 #&#
\begin{equation}
\label{un.bound} \limsup_{n\to\infty}\sup_{y\in K^{(2)}}
\mathbf{P}\bigl(\tau '_y>n-m\bigr)\leq g(\varepsilon)
\end{equation}
with some $g(\varepsilon)\to0$ as $\varepsilon\to0$.
Using the same arguments as in (\ref{L5.1}), we have
\[
\mathbf{P}\bigl(\tau'_y>n-m\bigr)\leq\mathbf{P}\bigl(
\tau^{\mathrm{bm}}_{y+\varepsilon
\sqrt
{n}x_0}>n-m\bigr)+o\bigl(n^{-r}\bigr),
\]
and $o(n^{-r})$ is uniform in $y$. Consequently, by the scaling
property of the Brownian motion,
%
%e79 #&#
\begin{equation}
\label{newbound} \sup_{y\in K^{(2)}}\mathbf{P}\bigl(
\tau'_y>n-m\bigr)\leq \sup_{z\in K\dvtx |z|\leq A,\operatorname{ dist}(z,\partial K)\leq2\varepsilon
}
\mathbf{P}\bigl(\tau^{\mathrm{bm}}_{z+\varepsilon x_0}>1/2\bigr)+o
\bigl(n^{-r}\bigr).\hspace*{-17pt}
\end{equation}

Note that if $\operatorname{ dist}(z,\partial K)\leq2\varepsilon$ then
$\operatorname{ dist}(z+\varepsilon x_0,\partial K)\leq C_*\varepsilon$.

The most standard way of bounding $\mathbf{P}(\tau^{\mathrm{bm}}_{x}>1/2)$ is
the use of
the parabolic boundary Harnack principle which gives
%
%e80 #&#
\begin{equation}
\label{harnack} \mathbf{P}\bigl(\tau^{\mathrm{bm}}_{x}>1/2\bigr)
\leq Cu(x),
\end{equation}
see \cite{Var99}, page 336, and references there. If $|x|$ is bounded
and $\operatorname{ dist}(x,\partial K)\leq C_*\varepsilon$, then
(\ref{un.bound}) is immediate from the definition of $u$.

But for convex cones there exists an elementary way of deriving (\ref
{un.bound}) from (\ref{newbound}), which we present below.

If $K$ is convex, then there exists a hyperplane $H=H(z)$ such that
$\operatorname{ dist}(z+\varepsilon x_0,H)\leq2C_*\varepsilon$ and $K\cap
H=\varnothing$. If we set
$T_z:=\inf\{t>0\dvtx z+B(t)\in H\}$ then, obviously,
\[
\mathbf{P}\bigl(\tau^{\mathrm{bm}}_{z+\varepsilon x_0}>1/2\bigr) \leq
\mathbf{P}(T_{z+\varepsilon x_0}>1/2). %
\]
Due to the rotational invariance of the Brownian motion, the normal to
$H$ component of $B$ is a one-dimensional
Brownian motion. As a result, we have
\[
\mathbf{P}(T_{z+\varepsilon x_0}>1/2)\leq \mathbf{P} \Bigl(2C_*\varepsilon+\inf
_{t\leq1/2}B_1(t)>0 \Bigr) %
\]
uniformly in $z$ satisfying $\operatorname{ dist}(z,\partial K)\leq2\varepsilon$.
Applying finally the reflection principle,
we conclude from (\ref{newbound}) that
\[
\mathbf{P}\bigl(\tau'_y>n-m\bigr)\leq C \varepsilon+
o\bigl(n^{-r}\bigr) %
\]
uniformly in $y$ satisfying $\operatorname{ dist}(y,\partial K)\leq2\varepsilon
\sqrt{n}$.

Summarizing,
%
%e81 #&#
\begin{equation}
\label{Loc.00} \lim_{A\to\infty}\lim_{\varepsilon\to0}
\limsup_{n\to\infty}n^{d/2+p/2} \sup_{y\in K^{(2)}}
\mathbf{P}\bigl(x+S(n)=y,\tau_x>n\bigr)=0.
\end{equation}

It remains to consider ``typical'' values of $y$, that is, $y\in
K^{(3)}$. Set
$m=[\varepsilon^3 n]$. We start with the representation
%
%e82 #&#
\begin{eqnarray}
\label{Loc.1}
&&\mathbf{P}\bigl(x+S(n)=y,\tau_x>n\bigr)
\nonumber
\\[-8pt]
\\[-8pt]
\nonumber
&&\qquad=\sum_{z\in
K}\mathbf{P}\bigl(x+S(n-m)=z,
\tau_x>n-m\bigr)\mathbf{P}\bigl(z+S(m)=y,\tau_z>m
\bigr).
\end{eqnarray}

Let $K_1(y):=\{z\in K\dvtx |z-y|<\varepsilon\sqrt{n}\}$. Applying
(\ref{Loc.L3.1}), we have
%
%e83 #&#
\begin{eqnarray}
\label{Loc.2}
\nonumber
&&\sum_{z\in K\setminus K_1(y)}\mathbf{P}
\bigl(x+S(n-m)=z,\tau _x>n-m\bigr)\mathbf {P}\bigl(z+S(m)=y,
\tau_z>m\bigr)
\\
\nonumber
&&\qquad\leq\sum_{z\in K\setminus K_1(y)}\mathbf {P}
\bigl(x+S(n-m)=z,\tau_x>n-m\bigr) \mathbf{P}\bigl(z+S(m)=y\bigr)
\\
&&\qquad\leq\sum_{z\in K\setminus K_1(y)}\mathbf {P}
\bigl(x+S(n-m)=z,\tau_x>n-m\bigr) Cn^{-d/2}
\varepsilon^{-3d/2}\exp\{ -a/\varepsilon\}
\\
\nonumber
&&\qquad\leq C\mathbf{P}(\tau_x>n-m)n^{-d/2}\varepsilon
^{-3d/2}\exp\{-a/\varepsilon\}
\\
&&\qquad\leq CV(x)n^{-d/2-p/2}\varepsilon^{-3d/2}\exp\{-a/\varepsilon\}\nonumber
\end{eqnarray}
uniformly in $y$ satisfying $\operatorname{ dist}(y,\partial K)>2\varepsilon
\sqrt{n}$.

If $\operatorname{ dist}(y,\partial K)>2\varepsilon\sqrt{n}$ and $z\in K_1(y)$, then
$\operatorname{ dist}(z,\partial K)>\varepsilon\sqrt{n}$. Using (\ref{Loc.L3.2}),
we have
%
%e84 #&#
\begin{eqnarray}
\label{Loc.3}
\nonumber
&&\sum_{z\in K_1(y)}\mathbf{P}
\bigl(x+S(n-m)=z,\tau_x>n-m\bigr)\mathbf {P}\bigl(z+S(m)=y,
\tau_z\leq m\bigr)
\\
&&\qquad\leq\sum_{z\in K_1(y)}\mathbf{P}
\bigl(x+S(n-m)=z,\tau_x>n-m\bigr) Cn^{-d/2}
\varepsilon^{-3d/2}\exp\{-a/\varepsilon\}
\nonumber
\\[-8pt]
\\[-8pt]
\nonumber
&&\qquad\leq C\mathbf{P}(\tau_x>n-m)n^{-d/2}\varepsilon
^{-3d/2}\exp\{-a/\varepsilon\}
\\
&&\qquad\leq CV(x)n^{-d/2-p/2}\varepsilon^{-3d/2}\exp\{-a/\varepsilon\}\nonumber
\end{eqnarray}
uniformly in $y$ satisfying $\operatorname{ dist}(y,\partial K)>2\varepsilon
\sqrt{n}$.

Using the local limit theorem for unconditioned random walks (see
Proposition~7.9 in Spitzer's book \cite{Sp76}), we
have, uniformly in $y$,
%
%e85 #&#
\begin{eqnarray}
\label{Loc.4}
\nonumber
\Sigma(y)&:=&\sum_{z\in K_1(y)}
\mathbf{P}\bigl(x+S(n-m)=z,\tau _x>n-m\bigr)\mathbf {P}
\bigl(z+S(m)=y\bigr)
\\
&=&\sum_{z\in K_1(y)}\mathbf{P}\bigl(x+S(n-m)=z,
\tau_x>n-m\bigr) \bigl(2\pi n\varepsilon ^3
\bigr)^{-d/2}
\nonumber
\\[-8pt]
\\[-8pt]
\nonumber
&&\hspace*{29pt}{}\times\exp\bigl\{-|y-z|^2/2\varepsilon^3n
\bigr\}
\\
\nonumber
&&{}+O \bigl(n^{-d/2-p/2}\varepsilon ^{-3d/2}e^{-a/\varepsilon}
\bigr).
\end{eqnarray}
It follows from compactness argument and the integral limit theorem for
$\{S(n)\}$ conditioned to
stay in $K$ that
\begin{eqnarray*}
&&\limsup_{n\to\infty}\sup_{y\in K^{(3)}}\biggl |\sum
_{z\in
K_1(y)}\mathbf {P}\bigl(x+S(n-m)=z|\tau_x>n-m
\bigr) \exp\bigl\{-|y-z|^2/2\varepsilon^3n\bigr\}
\\
&&\hspace*{37pt}\qquad{}-H_0\int_{\llvert (1-\varepsilon^3)^{1/2}r-
{y}/{\sqrt
{n}}\rrvert <\varepsilon} u(r)e^{-|r|^2/2}e^{-|(1-\varepsilon^3)^{1/2}r-y/\sqrt
{n}|^2/2\varepsilon^3}\,dr
\biggr|\\
&&\qquad=0
\end{eqnarray*}
for every fixed $\varepsilon$.
Set, for brevity,
\[
I_1(y,n,\varepsilon):=\int_{\llvert (1-\varepsilon^3)^{1/2}r-
{y}/{\sqrt{n}}\rrvert <\varepsilon}
u(r)e^{-|r|^2/2}e^{-|(1-\varepsilon^3)^{1/2}r-y/\sqrt
{n}|^2/2\varepsilon^3}\,dr %
\]
and
\[
I_2(y,n,\varepsilon):=\int_{\llvert (1-\varepsilon^3)^{1/2}r-
{y}/{\sqrt{n}}\rrvert <\varepsilon}
e^{-|(1-\varepsilon^3)^{1/2}r-y/\sqrt{n}|^2/2\varepsilon^3}\,dr. %
\]
Since $u(r)e^{-|r|^2/2}$ is uniformly continuous, we have
\begin{eqnarray*}
\limsup_{\varepsilon\to0}\sup_{n\geq1}\sup
_{y\in K^{(3)}} \frac{|I_1(y,n,\varepsilon)-u(y/\sqrt {n})e^{-|y|^2/2n}I_2(y,n,\varepsilon)|}{I_2(y,n,\varepsilon)}=0.
\end{eqnarray*}
Noting that
\begin{eqnarray*}
I_2(n,y,\varepsilon) &=&\bigl(1-\varepsilon^3
\bigr)^{-d/2}\varepsilon^{3d/2}\int_{|r'|<\varepsilon
^{-1/2}}
e^{-|r'|^2/2}\,dr'
\\
&\sim&\varepsilon^{3d/2}\int_{\mathbb{R}^d}
e^{-|r'|^2/2}\,dr' =\bigl(2\pi\varepsilon^3
\bigr)^{d/2},
\end{eqnarray*}
we conclude that
\begin{eqnarray*}
\limsup_{\varepsilon\to0}\sup_{n\geq1}\sup
_{y\in K^{(3)}} \frac{|I_1(y,n,\varepsilon)-u(y/\sqrt{n})e^{-|y|^2/2n}(2\pi
\varepsilon
^3)^{d/2}|}{(2\pi\varepsilon^3)^{d/2}}=0.
\end{eqnarray*}
Consequently,
\begin{eqnarray*}
&&\limsup_{n\to\infty}\sup_{y\in K^{(3)}}\biggl |\sum
_{z\in
K_1(y)}\mathbf {P}\bigl(x+S(n-m)=z|\tau_x>n-m
\bigr) \exp\bigl\{-|y-z|^2/2\varepsilon^3n\bigr\}
\\
&&\hspace*{165pt}\qquad{}-H_0u(y/\sqrt{n})e^{-|y|^2/2n}\bigl(2\pi\varepsilon
^3\bigr)^{d/2} \biggr|\\
&&\qquad=o\bigl(\varepsilon^{3d/2}\bigr).
\end{eqnarray*}

{F}rom this relation and (\ref{Loc.4}), we infer
%
%e86 #&#
\begin{equation}
\label{Loc.5} \lim_{\varepsilon\to0}\limsup_{n\to\infty}
\sup_{y\in K^{(3)}} \bigl|n^{d/2+p/2}\Sigma(y)-\varkappa
V(x)H_0u(y/\sqrt {n})e^{-|y|^2/2n} \bigr|=0.
\end{equation}
Combining (\ref{Loc.1}), (\ref{Loc.2}), (\ref{Loc.3}) and
(\ref{Loc.5}), we obtain
%
%e87 #&#
\begin{eqnarray}
\label{Loc.6}
&&\lim_{\varepsilon\to0}\limsup
_{n\to\infty}\sup_{y\in K^{(3)}} \biggl|n^{p/2+d/2}\mathbf{P}
\bigl(x+S(n)=y,\tau_x>n \bigr)
\nonumber
\\[-8pt]
\\[-8pt]
\nonumber
&&\hspace*{95pt}{}-\varkappa V(x)H_0u \biggl(\frac{y}{\sqrt{n}}
\biggr)e^{-|y|^2/2n}\biggr |=0.
\end{eqnarray}
%
%%%%%%%%%%%%%%%%%%%%%%%%%%%%%%%%%%%%%%%%%%%%%%%%%%%%%%%%%%%%%%%%%%%%%%%%%%%%%%%%%%%%%%%%%%%%%%%%%%%%%%%%%%%%%%%%%%%%%%%%
%s6.3 #&#
\subsection{Proof of Theorem \texorpdfstring{\protect\ref{Loc.T2}}{6}} Set $m=[(1-t)n]$
and write
%
%e88 #&#
\begin{eqnarray}
\label{Loc.7}
\nonumber
&&\mathbf{P}\bigl(x+S(n)=y,\tau_x>n\bigr)
\\
&&\qquad=\sum_{z\in K}\mathbf{P}\bigl(x+S(n-m)=z,
\tau_x>n-m\bigr)\mathbf {P}\bigl(z+S(m)=y,\tau_z>m
\bigr)
\\
&&\qquad=\sum_{z\in
K}\mathbf{P}\bigl(x+S(n-m)=z,
\tau_x>n-m\bigr)\mathbf{P}\bigl(y+S'(m)=z,
\tau'_y>m\bigr),\nonumber
\end{eqnarray}
where $S'$ is distributed as $-S$.

We first note that, according to Theorem~\ref{T} and Lemma~\ref{Loc.L1},
\begin{eqnarray*}
\Sigma_1(A,n)&:=&\sum_{z\in K:|z|>A\sqrt{n}}\mathbf {P}
\bigl(x+S(n-m)=z,\tau_x>n-m\bigr)\\
&&\hspace*{51pt}{}\times \mathbf{P}\bigl(y+S'(m)=z,
\tau'_y>m\bigr)
\\
&\leq& C(x,y)n^{-p-d/2}\mathbf{P}\bigl(\bigl|S'(m)\bigr|>A\sqrt{n}-|y| |
\tau'_y>m\bigr).
\end{eqnarray*}
Therefore, in view of Theorem~\ref{T2},
%
%e89 #&#
\begin{equation}
\label{Loc.8} \lim_{A\to\infty}\lim_{n\to\infty}n^{p+d/2}
\Sigma_1(A,n)=0.
\end{equation}
Applying (\ref{Loc.T1.1}) first to $\{S'(n)\}$ and then to $\{S(n)\}$,
we get
\begin{eqnarray*}
\Sigma_2(A,n)&:=&\sum_{z\in K:|z|\leq A\sqrt{n}}\mathbf {P}
\bigl(x+S(n-m)=z,\tau_x>n-m\bigr)\\
&&\hspace*{50pt}{}\times\mathbf{P}\bigl(y+S'(m)=z,
\tau'_y>m\bigr)
\\
&=&\varkappa^2\frac{V(x)V'(y)H_0^2}{ (t(1-t)
)^{p/2+d/2}}n^{-p-d}
\\
&&{}\times\sum_{z\in K:|z|\leq
A\sqrt{n}}u \biggl(\frac{z}{\sqrt{tn}}
\biggr) u \biggl(\frac{z}{\sqrt{(1-t)n}} \biggr)\\
&&\hspace*{65pt}{}\times \exp \biggl\{-\frac
{|z|^2}{2tn}-
\frac{|z|^2}{2(1-t)n} \biggr\}\\
&&{}+o(R_n),
\end{eqnarray*}
where
\begin{eqnarray*}
R_n&=&\mathbf{P}(\tau_x>n-m)m^{-p/2-d/2}\\
&&{} +
\bigl(n(n-m)\bigr)^{-p/2-d/2}\sum_{|z|\leq A\sqrt{n}}u \biggl(
\frac{z}{\sqrt {tn}} \biggr) \exp \biggl\{-\frac{|z|^2}{2tn} \biggr\}. %
\end{eqnarray*}
Using Theorem~\ref{T} and noting that the sum is of order $n^{d/2}$, we
conclude that
$R_n\leq Cn^{-p-d/2}$. Therefore,
\begin{eqnarray*}
\Sigma_2(A,n)&=&\varkappa^2\frac{V(x)V'(y)H_0^2}{ (t(1-t)
)^{p/2+d/2}}n^{-p-d}
\\
&&{}\times\sum_{z\in K:|z|\leq
A\sqrt{n}}u \biggl(\frac{z}{\sqrt{tn}}
\biggr) u \biggl(\frac{z}{\sqrt{(1-t)n}} \biggr)\exp \biggl\{-\frac
{|z|^2}{2tn}-
\frac{|z|^2}{2(1-t)n} \biggr\}
\\
&&{}+o\bigl(n^{-p-d/2}\bigr).
\end{eqnarray*}

Thus, it remains to compute the limiting value of the sum in the latter formula.
Using the homogeneity of $u$, we get
%
%e90 #&#
\begin{eqnarray}
\label{Loc.10}
&&
\lim_{n\to\infty} n^{-d/2}\sum
_{z\in K:|z|\leq
A\sqrt{n}}u \biggl(\frac{z}{\sqrt{tn}} \biggr) u \biggl(
\frac{z}{\sqrt{(1-t)n}} \biggr)\exp \biggl\{-\frac
{|z|^2}{2t(1-t)n} \biggr\}
\nonumber
\\[-8pt]
\\[-8pt]
\nonumber
&&\qquad=\frac{1}{ (t(1-t) )^{p/2}} \int_{w\in K:|w|\leq
A}u^2(w)e^{-|w|^2/2t(1-t)}\,dw.
\end{eqnarray}
Consequently,
%
%e91 #&#
\begin{eqnarray}
\label{Loc.9}
&&\lim_{A\to\infty}\lim
_{n\to\infty}n^{p+d/2}\Sigma_2(A,n)
\nonumber
\\[-8pt]
\\[-8pt]
\nonumber
&&\qquad =\varkappa^2\frac{V(x)V'(y)H_0^2}{ (t(1-t) )^{p+d/2}} \int_{K}u^2(w)e^{-|w|^2/2t(1-t)}\,dw.
\end{eqnarray}
Combining (\ref{Loc.7}), (\ref{Loc.8}) and (\ref{Loc.9}), we obtain
%
%e92 #&#
\begin{eqnarray}
\label{Loc.11}
&&\lim_{n\to\infty}n^{p+d/2}
\mathbf{P}\bigl(x+S(n)=y,\tau_x>n\bigr)
\nonumber
\\[-8pt]
\\[-8pt]
\nonumber
&&\qquad =\varkappa^2\frac{V(x)V'(y)H_0^2}{ (t(1-t) )^{p+d/2}} \int_{K}u^2(w)e^{-|w|^2/2t(1-t)}\,dw.
\end{eqnarray}
Substituting $v=w/\sqrt{t(1-t)}$ we see that (\ref{Loc.T1.2}) holds with
$\rho=\varkappa^2\int_{K}u^2(v)\times  e^{-|v|^2/2}\,dv$.

Repeating the derivation of (\ref{Loc.10}), we obtain
\begin{eqnarray*}
&&\lim_{n\to\infty} n^{p+d/2}\mathbf{P} \biggl(
\frac{x+S([tn])}{\sqrt{n}}\in D,x+S(n)=y,\tau _x>n \biggr)
\\
&&\qquad=\frac{V(x)V'(y)H_0^2}{(2\pi)^{d} (t(1-t) )^{p+d/2}} \int_{D}u^2(w)e^{-|w|^2/2t(1-t)}\,dw.
\end{eqnarray*}
Combining this with (\ref{Loc.11}), we get (\ref{Loc.T1.3}). Thus, the
proof is finished.
%%%%%%%%%%%%%%%%%%%%%%%%%%%%%%%%%%%%%%%%%%%%%%%%%%%%%%%%%%%%%%%%%%%%%%%%%%%%%%%%%%%%%%%%%%%%%%%%%%%%%%%%%%%%%%%%%%%%%%

\section*{Acknowledgements}
The authors are grateful to the referees for a number of comments and
suggestions that helped us to improve the exposition.

% imsref loaded by akundreckaite, 2014-02-10 15:30:43
%

% zodis "Acknowledgments" paliekamas pagal autoriu

%suskaldyti doi

\printaddresses

\end{document}